\documentclass[10pt, article]{amsart}
\usepackage{tikz}
\usetikzlibrary{calc}
\usepackage{ae} 
\usepackage[T1]{fontenc}
\usepackage[cp1250]{inputenc}
\usepackage{amsmath}
\usepackage{amssymb, amsfonts,amscd,verbatim}

\usepackage[normalem]{ulem}
\usepackage{hyperref}
\usepackage{indentfirst}
\usepackage{latexsym}
\usepackage{mathrsfs} 
\input xy
\xyoption{all}

\usepackage{amsmath}    

\theoremstyle{plain}
\newtheorem{Pocz}{Poczatek}[section]
\newtheorem{Proposition}[Pocz]{Proposition}
\newtheorem{Theorem}[Pocz]{Theorem}
\newtheorem{Corollary}[Pocz]{Corollary}

\newtheorem{Lemma}[Pocz]{Lemma}
\newtheorem{Observation}[Pocz]{Observation}
\newtheorem{Notation}[Pocz]{Notation}
\newtheorem{Question}[Pocz]{Question}

\theoremstyle{definition}
\newtheorem{Definition}[Pocz]{Definition}

\theoremstyle{remark}
\newtheorem{Remark}[Pocz]{Remark}

\def\UU{{\mathcal U}}
\def\VV{{\mathcal V}}

\def\dim{\mathrm{dim}}

\errorcontextlines=0
\numberwithin{equation}{section}
%

\title[Decomposition complexity with respect to coarse properties]
{Decomposition complexity with respect to coarse properties}

\author{Jerzy Dydak}

\address{University of Tennessee, Knoxville, TN 37996}
\email{jdydak\@@utk.edu}

\date{ \today
}
\keywords{Decomposition complexity, Property A, coarse amenability}

\subjclass[2000]{Primary 54F45; Secondary 55M10}



\begin{document}
\maketitle
\begin{center}
\today
\end{center}

\begin{abstract}
We formalize the concept of a family of metric spaces satisfying a coarse property uniformly and we generalize finite decomposition complexity of Erik Guentner, Romain Tessera, and Guoliang Yu. Of particular interest are results determining sufficient conditions for a metric space to satisfy Property A of Guoliang Yu.
\end{abstract}

\section{Introduction}

Asymptotic dimension was introduced by Gromov for the purpose of studying groups using geometric methods (see \cite{Roe}). In Ostrand (\cite{O$_1$} or
\cite{OstrandDimofMetriSpacesHilbert}) formulation (see \cite{BDLM}) it can be defined as follows:

\begin{Definition}
Suppose $X$ is a metric space. The asymptotic dimension of $X$ is at most $n$
if, for every real number $R > 0$, there is a decomposition of $X$ into a union of
its subsets $X_0, \ldots, X_n$ such that each $X_i$ is the union of a uniformly bounded
and $R$-disjoint family $\mathcal{U}_i$. That means there is a real number $S > 0$ with each member
of $\mathcal{U}_i$ being of diameter at most $S$ and the distance between points belonging
to different elements of $\mathcal{U}_i$ is at least $R$.
\end{Definition}

The concept of asymptotic dimension was generalized by Dranishnikov \cite{Dra1} to that of
$X$ having asymptotic Property C:

\begin{Definition}
Suppose $X$ is a metric space. $X$ has the asymptotic property C
if, for every sequence of real numbers $R_i > 0$, there is a decomposition of $X$ into a finite union of
its subsets $X_0, \ldots, X_n$ for some natural $n$ such that each $X_i$ is the union of a uniformly bounded
and $R_i$-disjoint family $\mathcal{U}_i$. 
\end{Definition}

Yamauchi \cite{Yam} proved that the infinite direct product of integers has asymptotic property C.
See \cite{BG}, \cite{BN}, and \cite{DZ} for recent results concerning asymptotic property C.

The next generalization of asymptotic dimension appeared in \cite{GTY1} under the name of \textbf{finite
decomposition complexity} (FDC) and was introduced to study questions concerning the topological rigidity of manifolds \cite{GTY1} \cite{GTY2}. 
FDC naturally arises in the following contexts at the border of large-scale geometry and topology:\\
1. The Bounded Borel Conjecture which asks the following: Is a quasi-isometry between uniformly contractible Riemannian manifolds necessarily a bounded distance from a homeomorphism? In dimensions higher than four, surgery theory reduces this problem to proving the bounded Farrell-Jones Isomorphism Conjecture (a coarse geometric analogue of the usual Farrell-Jones Conjecture) which asserts that a certain assembly map in bounded L-theory is an isomorphism. FDC was defined for the purpose of developing a large scale cutting and pasting method to attack these conjectures with the help of the controlled Mayer-Vietoris sequence of Ranicki-Yamasaki \cite{RY1, RY2}. Using these techniques it was proved in \cite{GTY1} that if the fundamental group of a closed aspherical manifold has finite decomposition complexity, then its universal cover is boundedly rigid, that is, satisfies the Bounded Borel Conjecture, and the manifold itself is stably rigid.\\
2. The integral Novikov conjecture for the algebraic K-theory of group rings $R[\Gamma]$.
If $ \Gamma$ has FDC, it was studied in \cite{RTY}. For a discrete group $ \Gamma$, the classical Novikov conjecture on the homotopy invariance of higher signatures is implied by rational injectivity of the Baum-Connes assembly map \cite{BC}. In Yu \cite{GYu} and Skandalis-Tu-Yu \cite{STG}, injectivity of the Baum-Connes map was proved for groups coarsely embeddable into Hilbert space. Using this result, Guentner, Higson, and Weinberger \cite{GHW} proved the Novikov conjecture for linear groups. This was followed by the work of Guentner, Tessera, and Yu \cite{GTY1}, who proved the integral Novikov conjecture (establishing integral injectivity of the L-theoretic assembly map) for geometrically finite FDC groups (i.e. those with a finite CW model for their classifying space), and hence the stable Borel Conjecture for closed aspherical manifolds whose fundamental groups have FDC.

The class of metric spaces with finite decomposition complexity contains all countable linear groups equipped with a proper (left-)invariant metric (see \cite{GTY1} \cite{GTY2}).

Notice that Dranishnikov and Zarichnyi \cite{DZ} introduced a simpler concept, namely \textbf{straight finite decomposition complexity} which is much closer in spirit to the asymptotic property C.
That concept was subsequently generalized in \cite{Dyd1} and, independently, in \cite{RR}.
The common feature of all the generalizations is that they imply Property A of G.Yu
(see \cite{NowakYu}, \cite{CDV1}, \cite{CDV2}, and \cite{CDV3}  for various characterizations of it).

At a conference in Regensburg (July 2016) Daniel Kasprowski considered a generalization of finite decomposition complexity that involved arriving at spaces satisfying Property A uniformly and asked if such spaces have Property A. In this paper we introduce a new kind of decomposition complexity which implies Property A thus answering Kasprowski's question positively.

In contrast to most papers on coarse geometry, we do not restrict ourselves to metric spaces only. It is more convenient to consider a wider class of spaces.

\begin{Definition}
An \textbf{$\infty$-pseudo-metric space} $X$ is a set with a distance function $d$ that satisfies weaker axioms that a metric:\\
1. $d(x,x)=0$ and $d(x,y)=d(y,x)\ge 0$ for all $x,y\in X$,\\
2. $d(x,y)$ is allowed to assume the value of $\infty$,\\
3. $d(x,y)\leq d(x,z)+d(z,y)$ if $d(x,z), d(z,y) < \infty$.
\end{Definition}

\begin{Remark}
The author is grateful to Pawe\l\ Grzegrz\' olka for useful comments on the paper.
\end{Remark}

\section{Trees of partitions of unity}
In this section we introduce the concept of a tree of partitions of unity which will be our main tool in investigating decomposition complexity of metric spaces.

A \textbf{partition of unity} $\phi$ on a set $X$ is a family
of functions $\phi_s:X\to [0,1]$, $s\in S$, such that $\sum\limits_{s\in S}\phi_s(x)=1$ for all $x\in X$. 
We can consider $\phi$ to be a function from $X$ to $\Delta(S)\subset l_1(S)$, the full simplicial complex spanned by $S$.

 $\phi$ is \textbf{trivial} if its index set consists of one point.

A \textbf{stratum} of a partition of unity $\phi$ on a set $X$ is any of the sets $\phi_s^{-1}(0,1]$,
$s\in S$. Alternatively, it is
 the point-inverse $\phi^{-1}(st(s))$ of some star of a vertex $s$ in the range of $\phi$.

A \textbf{point-finite partition of unity} $\phi$ is one for which the set $\{s\in S | \phi_s(x) > 0\}$ is finite for each $x\in X$. In other words, each point of $X$ belongs to finitely many strata of $\phi$. 

The \textbf{dimension} of $\phi$ is at most $n$ if each point of $X$ belongs to at most $(n+1)$ strata of $\phi$. In other words, the \textbf{multiplicity} of $\phi$ (or its strata), is at most $(n+1)$. 

A \textbf{partial partition of unity} on $X$ is a partition of unity on a subset of $X$.

\begin{Notation}
The notation $\phi\leq \psi$ means that $\phi$ is a partial partition of unity on a stratum of $\psi$.

\end{Notation}

\begin{Definition}\label{probability tree}
By a \textbf{probability tree} we mean a rooted tree $\mathcal{T}$ with each edge assigned a non-negative number such that the following conditions are satisfied:\\
1. For each vertex $v$ of $\mathcal{T}$ that is not a leaf, the sum of values on edges stemming from $v$ to vertices one depth higher is $1$,\\
2. The sum of probabilities of all leaves of $\mathcal{T}$ is $1$, where each vertex $v$ that is not a root is given the probability equal to the product of all values on edges belonging to the unique path leading from the root to $v$.
\end{Definition}

\begin{Definition}\label{tree of partitions of unity}
A \textbf{tree of partitions of unity} $\mathcal{T}$ on a set $X$ is a rooted tree whose vertices are partial partitions of unity on $X$ satisfying the following conditions:\\
1. The partition of unity at the root of $\mathcal{T}$ is a full partition of unity, i.e. its domain is the whole $X$,\\
2. If there is an edge from $\psi$ at depth $n$ to $\phi$ at depth $n+1$, then $\phi\leq \psi$, i.e. $\phi$ is a partition of unity on a stratum of $\psi$ \\
3. Each $\psi$ at depth $n$ which is not a leaf of $\mathcal{T}$ is indexed
by all partitions of unity at depth $n+1$ which have an edge to $\psi$,\\
4. The leaves of $\mathcal{T}$ are trivial partitions of unity,\\
5. For each $x\in X$, the restriction $\mathcal{T}|\{x\}$ of $\mathcal{T}$ to $\{x\}$ is a probability tree
if each edge $[\psi,\phi]$ is given the value $\psi_{\phi}(x)$.
\end{Definition}

\begin{Observation}
Every tree of partitions of unity $\mathcal{T}$ on a set $X$ induces a partition of unity $\phi(\mathcal{T})$ on $X$ that is indexed by leaves of $\mathcal{T}$ and whose strata are exactly the leaves of $\mathcal{T}$. It is defined as follows: given $x\in X$ and given a leaf $L$ of $\mathcal{T}$, $\phi(\mathcal{T})_L(x)$ is the product of 
$\psi_v(x)$, with $\psi$ and $v$ being on the geodesic joining $L$ to the root of $\mathcal{T}$,
and with $v$ being at height 1 higher than the height of $\psi$.
\end{Observation}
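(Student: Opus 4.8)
The claim is that a tree of partitions of unity $\mathcal{T}$ on $X$ induces a genuine partition of unity $\phi(\mathcal{T})$ indexed by the leaves, with the leaves as its strata. Two things must be checked: that for each $x\in X$ the numbers $\phi(\mathcal{T})_L(x)$ sum to $1$ over all leaves $L$, and that the stratum $\phi(\mathcal{T})_L^{-1}(0,1]$ is exactly $L$ (viewed as a subset of $X$ via the domain of the trivial partition of unity sitting at $L$). The whole argument will be an instance of the probability-tree bookkeeping that is already packaged in Definition \ref{probability tree} and condition 5 of Definition \ref{tree of partitions of unity}.

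First I would fix $x\in X$ and pass to the probability tree $\mathcal{T}|\{x\}$ guaranteed by condition 5, where each edge $[\psi,\phi]$ carries the value $\psi_\phi(x)$. By definition of $\phi(\mathcal{T})_L(x)$ as the product of the edge-values along the geodesic from the root to $L$, this quantity is precisely the ``probability of the leaf $L$'' in the sense of part 2 of Definition \ref{probability tree}. Hence $\sum_{L}\phi(\mathcal{T})_L(x)$ is the sum of probabilities of all leaves of the probability tree $\mathcal{T}|\{x\}$, which equals $1$ by part 2 of Definition \ref{probability tree}. This already gives that $\phi(\mathcal{T})$ is a partition of unity; implicitly one also notes each factor $\psi_v(x)\in[0,1]$, so the products lie in $[0,1]$, and the value $\phi(\mathcal{T})_L(x)$ is well-defined only when $x$ lies in the domain of every partition of unity along the path from the root to $L$ — but condition 5 tacitly presupposes this by making the restricted tree a probability tree (edges leaving a vertex whose domain misses $x$ are simply absent), so for such $L$ we set $\phi(\mathcal{T})_L(x)=0$.

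Next I would identify the strata. Since $L$ is a trivial partition of unity (condition 4), its domain is a subset $X_L\subseteq X$, and I claim $\phi(\mathcal{T})_L^{-1}(0,1]=X_L$. For the inclusion $\subseteq$: if $\phi(\mathcal{T})_L(x)>0$ then every edge-value $\psi_v(x)$ along the path from the root to $L$ is positive, so inductively $x$ lies in the stratum of $\psi$ indexed by the next vertex, hence in the domain of that next partition of unity (using condition 2, which says $\phi$ is a partition of unity on a stratum of $\psi$), and therefore in $X_L$. For $\supseteq$: if $x\in X_L$, walk back up the path from $L$ to the root; at each step $x$ lies in a stratum $\psi_w^{-1}(0,1]$ of the parent $\psi$ where $w$ is the child on the path, so $\psi_w(x)>0$; multiplying these strictly positive numbers gives $\phi(\mathcal{T})_L(x)>0$. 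This uses only conditions 1–4 and the definition of a stratum.

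The only genuinely delicate point — more a matter of careful setup than of difficulty — is the interplay between partial partitions of unity and the domain condition: one must be sure that the geodesic from the root to any leaf $L$ consists of partitions of unity whose domains are nested (each on a stratum of the previous, hence contained in it), so that the product defining $\phi(\mathcal{T})_L(x)$ makes sense exactly on $X_L$ and the "probability tree" structure of condition 5 is consistent. Once this nesting is spelled out, both halves of the argument are routine induction along the tree. I would therefore present the proof as: (i) reduce to $\mathcal{T}|\{x\}$ and invoke Definition \ref{probability tree}(2) for $\sum_L \phi(\mathcal{T})_L(x)=1$; (ii) the two-inclusion argument above for the stratum identification; with the domain-nesting remark inserted at the start as the one point requiring care.
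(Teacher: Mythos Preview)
Your proof is correct and is exactly the unpacking of Definitions \ref{probability tree} and \ref{tree of partitions of unity} that the author has in mind. Note, however, that the paper offers no proof at all: the statement is labeled an Observation and is left to the reader, so your argument is strictly more detailed than what appears in the text rather than an alternative to any existing proof.
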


\begin{Definition}
Given $R, \epsilon > 0$, a function $\phi:X\to Y$ between $\infty$-pseudo-metric spaces is 
\textbf{$(\epsilon,R)$-continuous} if $d_X(x,y)\leq R$ implies $d_Y(\phi(x),\phi(y))\leq \epsilon$.
\end{Definition}

\begin{Lemma}\label{FundLemmaOnRootedTreesOfPUs}
Suppose $\phi=\{\phi_s\}_{s\in S}$ is a partition of unity on an $\infty$-pseudo-metric space $X$ and $R, \epsilon_0, \epsilon_1 > 0$. If $\phi$ is $(\epsilon_0,R)$-continuous and, for each $s\in S$, there is 
a function $f_s$ from $\phi_s^{-1}(0,1]$ to the unit sphere of $l_1(C_s)$, with $\{C_s\}_{s\in S}$ being mutually disjoint, that is $(\epsilon_1,R)$-continuous, then the function $f$ from $X$ to the unit sphere of $l_1(\bigcup\limits_{s\in S} C_s)$
defined by 
$$f(x)=\sum\limits_{s\in S} \phi_s(x)\cdot f_s(x)$$
 is $(\epsilon,R)$-continuous, where
$$\epsilon=\epsilon_0+\epsilon_1.$$
\end{Lemma}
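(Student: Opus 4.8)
The plan is to push everything through the fact that the index sets $C_s$ are pairwise disjoint, so that the target $l_1(\bigcup_{s\in S}C_s)$ is the $l_1$-direct sum of the blocks $l_1(C_s)$ and $l_1$-norms split as sums over $s$. First I would record that $f$ is well defined and lands in the unit sphere: for each $x$ the vector $\phi_s(x)f_s(x)$ --- read as $0$ whenever $\phi_s(x)=0$, so that it is irrelevant that $f_s(x)$ may be undefined there --- is supported in $C_s$ and has norm $\phi_s(x)$, hence $f(x)=\sum_{s}\phi_s(x)f_s(x)$ is an absolutely convergent series in $l_1(\bigcup_s C_s)$ with $\|f(x)\|_1=\sum_s \phi_s(x)\|f_s(x)\|_1=\sum_s\phi_s(x)=1$.

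Now fix $x,y\in X$ with $d_X(x,y)\le R$. By disjointness of the $C_s$,
$$\|f(x)-f(y)\|_1=\sum_{s\in S}\|\phi_s(x)f_s(x)-\phi_s(y)f_s(y)\|_{l_1(C_s)},$$
so it suffices to bound the $s$-th summand by $\phi_s(x)\,\epsilon_1+|\phi_s(x)-\phi_s(y)|$; summing then gives $\epsilon_1\sum_s\phi_s(x)+\sum_s|\phi_s(x)-\phi_s(y)|=\epsilon_1+\|\phi(x)-\phi(y)\|_1\le \epsilon_1+\epsilon_0=\epsilon$, the last inequality being exactly the $(\epsilon_0,R)$-continuity of $\phi$. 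For the $s$-th summand: if $\phi_s(x)=\phi_s(y)=0$ it is $0$; if $\phi_s(x),\phi_s(y)>0$ then both $x$ and $y$ lie in the domain $\phi_s^{-1}(0,1]$ of $f_s$, so $\|f_s(x)-f_s(y)\|_1\le\epsilon_1$ by the $(\epsilon_1,R)$-continuity of $f_s$, and inserting $\pm\,\phi_s(x)f_s(y)$ and applying the triangle inequality together with $\|f_s(y)\|_1=1$ yields
$$\|\phi_s(x)f_s(x)-\phi_s(y)f_s(y)\|_1\le \phi_s(x)\,\|f_s(x)-f_s(y)\|_1+|\phi_s(x)-\phi_s(y)|\le \phi_s(x)\,\epsilon_1+|\phi_s(x)-\phi_s(y)|;$$
and if exactly one of $\phi_s(x),\phi_s(y)$ is positive, say $\phi_s(y)=0$, the summand equals $\phi_s(x)\|f_s(x)\|_1=\phi_s(x)=|\phi_s(x)-\phi_s(y)|$, again within the claimed bound.

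The only delicate point is this ``mixed'' case, in which $f_s$ is available at one of the two points but not the other, so the add-and-subtract trick of the main case does not apply verbatim; the resolution is simply that $f_s$ takes values in the unit sphere, so the whole contribution there is precisely the coefficient change $|\phi_s(x)-\phi_s(y)|$, which is already charged to the $\phi$-term. Apart from this bit of bookkeeping about the domains of the $f_s$, the argument is a routine application of the triangle inequality in $l_1$ and of the two continuity hypotheses, with disjointness of the $C_s$ doing the work of keeping the blocks from interfering.
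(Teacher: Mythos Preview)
Your proof is correct and follows essentially the same approach as the paper's: split the $l_1$-difference over the disjoint blocks $C_s$, handle each summand by cases according to which of $\phi_s(x),\phi_s(y)$ are positive, and in the main case add and subtract a cross term to separate the coefficient change from the $f_s$-change. The only cosmetic difference is that you insert $\pm\,\phi_s(x)f_s(y)$ and end up with $\phi_s(x)\epsilon_1$ in the bound, whereas the paper inserts $\pm\,\phi_s(y)f_s(x)$ and gets $\phi_s(y)\epsilon_1$; either choice sums to $\epsilon_1$.
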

\begin{proof}
Suppose $d(x,y)\leq R$ for some $x,y\in X$. If both $x$ and $y$ belong to the stratum of $\phi$ induced by $s\in S$, then
$$| \phi_s(x)\cdot f_s(x)-  \phi_s(y)\cdot f_s(y)|=|(\phi_s(x)-\phi_s(y))\cdot f_s(x)+\phi_s(y)\cdot (f_s(x)-f_s(y)|\leq$$
$$ |\phi_s(x)-\phi_s(y)|+\phi_s(y)\cdot |f_s(x)-f_s(y)|\leq  |\phi_s(x)-\phi_s(y)|+\phi_s(y)\cdot \epsilon_1.$$
If only one of them, say $x$, belongs to the stratum of $\phi$ induced by $s\in S$, then
$$| \phi_s(x)\cdot f_s(x)-  \phi_s(y)\cdot f_s(y)|=|\phi_s(x)\cdot f_s(x)|=|\phi_s(x)-\phi_s(y)|.$$
The same equality holds if both $x$ and $y$ do not belong to the stratum of $\phi$ induced by $s\in S$. In all cases we can state that 
$$| \phi_s(x)\cdot f_s(x)-  \phi_s(y)\cdot f_s(y)|\leq |\phi_s(x)-\phi_s(y)|+\phi_s(y)\cdot \epsilon_1.$$
Therefore
$$|f(x)-f(y)|=\sum\limits_{s\in S} | \phi_s(x)\cdot f_s(x)-  \phi_s(y)\cdot f_s(y)|\leq  $$
$$\sum\limits_{s\in S}(|\phi_s(x)-\phi_s(y)|+\phi_s(y)\cdot \epsilon_1) \leq \epsilon_0+\epsilon_1.$$
\end{proof}

\begin{Theorem}\label{FundThmOnRootsOfPUs}
Suppose $\mathcal{T}$ is a tree of partitions of unity on an $\infty$-pseudo-metric space $X$ and $R, \epsilon_n > 0$,
for $n\ge 0$. If each vertex of $\mathcal{T}$ at depth $n$ is $(\epsilon_n,R)$-continuous, then the partition of unity on $X$ induced by $\mathcal{T}$ is $(\epsilon,R)$-continuous, where
$$\epsilon=\sum\limits_{i=0}^\infty \epsilon_i.$$
\end{Theorem}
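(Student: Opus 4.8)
The plan is to approximate the induced partition of unity $\phi(\mathcal{T})$ by partitions of unity $\Phi_n$ assembled from the top $n$ levels of $\mathcal{T}$, and to bound the continuity of each $\Phi_n$ by an $n$-fold application of Lemma~\ref{FundLemmaOnRootedTreesOfPUs}; the assertion for $\phi(\mathcal{T})$ then follows by letting $n\to\infty$.

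First I would introduce the truncations. For a vertex $v$ of $\mathcal{T}$ and a point $x\in X$, let $p_v(x)$ be the product of the edge-values $\psi_\phi(x)$ along the geodesic from the root of $\mathcal{T}$ to $v$ — equivalently, the probability assigned to $v$ by the probability tree $\mathcal{T}|\{x\}$ — and let $w_v$ denote the partition of unity sitting at $v$. For $n\ge 0$ let $\Phi_n$ be the family indexed by the set $I_n$ of all vertices at depth $n$ together with all leaves at depth $<n$, with $(\Phi_n)_v:=p_v$. A short induction on $n$, using only condition~1 of Definition~\ref{probability tree} and the fact that $\mathrm{dom}(w_v)=\{x:p_v(x)>0\}$, shows that $\sum_{v\in I_n}p_v(x)=1$, so each $\Phi_n$ is a partition of unity on all of $X$; moreover $\Phi_0$ is the trivial partition of unity at the root, and for a non-leaf vertex $v$ of depth $n$ the $v$-stratum of $\Phi_n$ coincides with $\mathrm{dom}(w_v)$.

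The inductive step is essentially Lemma~\ref{FundLemmaOnRootedTreesOfPUs}. One checks directly that $\Phi_{n+1}=\sum_{v\in I_n}(\Phi_n)_v\cdot f_v$, where $f_v:=w_v$, a map from the $v$-stratum of $\Phi_n$ into the unit sphere of $l_1$ spanned by the children of $v$, when $v$ is a non-leaf of depth $n$, and $f_v:=e_v$, the constant map onto a single basis vector, when $v$ is a leaf (so $v\in I_n$ has depth $\le n$). Distinct vertices of $I_n$ contribute disjoint index sets, the constant maps are $(0,R)$-continuous, and the $w_v$ are $(\epsilon_n,R)$-continuous by hypothesis. Hence, assuming inductively that $\Phi_n$ is $\bigl(\sum_{i=0}^{n-1}\epsilon_i,\,R\bigr)$-continuous — the case $n=0$ being trivial since $\Phi_0$ is constant — Lemma~\ref{FundLemmaOnRootedTreesOfPUs} yields that $\Phi_{n+1}$ is $\bigl(\sum_{i=0}^{n}\epsilon_i,\,R\bigr)$-continuous.

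Finally I would pass to the limit. For each $x$, the vectors $\Phi_n(x)$ and $\phi(\mathcal{T})(x)$ agree on every leaf of depth $\le n$, while the mass $\Phi_n(x)$ places on non-leaf vertices of depth $n$ equals the mass $\phi(\mathcal{T})(x)$ places on leaves of depth $>n$; both equal $1-\sum_{L:\,\mathrm{depth}(L)\le n}p_L(x)$, where the total leaf-probability $\sum_{\text{all leaves }L}p_L(x)$ equals $1$ by condition~5 of Definition~\ref{tree of partitions of unity} and condition~2 of Definition~\ref{probability tree}. Therefore $|\Phi_n(x)-\phi(\mathcal{T})(x)|=2\sum_{L:\,\mathrm{depth}(L)>n}p_L(x)\to 0$ as $n\to\infty$, so for $x,y$ with $d(x,y)\le R$ one has, for every $n$,
$$|\phi(\mathcal{T})(x)-\phi(\mathcal{T})(y)|\le|\phi(\mathcal{T})(x)-\Phi_n(x)|+\sum_{i=0}^{n-1}\epsilon_i+|\Phi_n(y)-\phi(\mathcal{T})(y)|,$$
and letting $n\to\infty$ gives $|\phi(\mathcal{T})(x)-\phi(\mathcal{T})(y)|\le\sum_{i=0}^{\infty}\epsilon_i=\epsilon$. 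I expect the only real difficulty to be bookkeeping: verifying that at each stage $\Phi_n$ and the partitions $w_v$ (with leaves read as constant maps into disjoint one-dimensional $l_1$-spaces) truly meet the hypotheses of Lemma~\ref{FundLemmaOnRootedTreesOfPUs}, and justifying the $l_1$-convergence $\Phi_n\to\phi(\mathcal{T})$; once these are in place the estimate is immediate, and if $\mathcal{T}$ has finite depth the limiting step is unnecessary since then $\Phi_n=\phi(\mathcal{T})$ for all large $n$.
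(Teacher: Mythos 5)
Your proof is correct and follows essentially the same route as the paper: your truncations $\Phi_n$ are exactly the partitions of unity induced by the paper's auxiliary trees (obtained by replacing every non-leaf at depth $n$ with a trivial partition of unity), and both arguments get their continuity by iterating Lemma~\ref{FundLemmaOnRootedTreesOfPUs} and then control the error through the tail of the leaf probabilities. The only differences are cosmetic: the paper's finite-height induction applies the lemma with the root as the outer partition and the collapsed depth-one subtrees as the inner maps (rather than your top-down composition with the depth-$n$ vertices as inner maps), and it phrases the limiting step as a contradiction using leaves carrying all but $\delta$ of the mass instead of your direct pointwise $l_1$-convergence $\Phi_n\to\phi(\mathcal{T})$.
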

\begin{proof}
First, consider the case of $\mathcal{T}$ having a finite height $n$. If $n=1$, then the statement is obviously true. Suppose \ref{FundThmOnRootsOfPUs} is valid for all rooted trees of partitions of unity of height at most $k\ge 1$. Consider a rooted tree $\mathcal{T}$  of partitions of unity on $X$ of height $k+1$. Each vertex of $\mathcal{T}$ at height $1$ is a root of a subtree of $\mathcal{T}$
that induces a partition of unity on a stratum of $\phi(\mathcal{T})$ that is $(\epsilon',R)$-continuous, where
$\epsilon'=\sum\limits_{i=1}^\infty \epsilon_i.$ Create a new rooted tree of partitions of unity
$\mathcal{T}'$ on $X$ with the same root as $\mathcal{T}$ but with new vertices at height $1$, namely
the above induced partitions of unity by rooted subtrees. Applying \ref{FundLemmaOnRootedTreesOfPUs} one gets that $\mathcal{T}'$ induces a partition of unity
on $X$ that is $(\epsilon,R)$-continuous, where
$$\epsilon=\sum\limits_{i=0}^\infty \epsilon_i.$$
Notice that partition of unity is identical with the one induced by $\mathcal{T}$.

Suppose $\mathcal{T}$ is an arbitrary tree of partitions of unity on $X$ such that each vertex of $\mathcal{T}$ at depth $n$ is $(\epsilon_n,R)$-continuous. Assume $d(x,y)\leq R$ but
$\phi(x)-\phi(y)| > \epsilon+3\delta$ for some $\delta > 0$.
Without loss of generality assume $X=\{x,y\}$.
Choose finitely many leaves $L$ of $\mathcal{T}$ that contribute at least $1-\delta$ to the probability distribution of
both $x$ and $y$. Let $n$ be the maximum height of those leaves. For each vertex $v$ at height $n$
that is not a leaf, replace the partition of unity at $v$ by the trivial partition of unity.
The partition of unity $\psi$ induced by the new rooted tree is
$(\epsilon,R)$-continuous. On leaves in $L$ both $\phi$ and $\psi$ agree.
Therefore $\phi(x)-\phi(y)| \leq \epsilon+2\delta$, a contradiction.
\end{proof}

\begin{Definition}
Suppose $X$ is an $\infty$-pseudo-metric space and $R > 0$.
 Given $x\in X$, and given $V\subset X$
 we define the \textbf{index} $i_R(x,V)$ of $x$ in $V$
 as the smallest integer $k\ge 0$ such that there is a chain of points $x_0=x, x_1,\ldots, x_k$
 with $x_k\notin V$ and $d(x_i,x_{i+1})\leq R$ for all $i < k$.
 If such a chain does not exist, we put $i_R(x,V)=\infty$.
\end{Definition}

If the multiplicity function $m_{\VV}$ of a cover $\VV=\{V_s\}_{s\in S}$ is finite at each point, then $\VV$ has a natural partition of unity $\phi_R^{\VV}$ associated to it:
$$(\phi_R^{\VV})_s(x)=\frac{i_R(x,V_s)}{\sum\limits_{t\in S}i_{\UU}(x,V_t)}.$$
In case there are indices $t\in S$ such that $i_R(x,V_t)=\infty$, we count the number of such indices, say
there is $k$ of them, and we put
$(\phi_R^{\VV})_s(x)=1/k$ if $i_R(x,V_s)=\infty$ and $(\phi_R^{\VV})_s(x)=0$ if $i_R(x,V_s)< \infty$.

\begin{Lemma}\label{ProjectionOnUnitSphere}
Suppose $C$ is a set of cardinality $2m$ and $x, y\in l_1(C)$ have non-negative coordinates.
If $|x_c-y_c|\leq 1$ for each $c\in C$ and $|x| \ge L$, then
$$\|\frac{x}{|x|}-\frac{y}{|y|}\|\leq \frac{4m}{L}.$$
\end{Lemma}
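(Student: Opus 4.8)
The plan is to reduce the statement to the elementary fact that the normalization map $v\mapsto v/|v|$ is Lipschitz away from the origin, with Lipschitz constant controlled by $1/|v|$. Throughout, $|\cdot|$ and $\|\cdot\|$ both denote the $l_1$-norm on $l_1(C)$. First I would record two crude estimates. Since $C$ has exactly $2m$ elements and $|x_c-y_c|\le 1$ for every $c\in C$, summing over coordinates gives $\|x-y\|=\sum_{c\in C}|x_c-y_c|\le 2m$. Since the $l_1$-norm is $1$-Lipschitz, $\big||x|-|y|\big|\le \|x-y\|\le 2m$ as well.

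Next I would use the standard splitting
$$\frac{x}{|x|}-\frac{y}{|y|}=\frac{x-y}{|x|}+y\Big(\frac{1}{|x|}-\frac{1}{|y|}\Big),$$
which is valid as soon as $y\ne 0$ (the degenerate case $y=0$ has no normalization to speak of and may be excluded or treated trivially). Taking $l_1$-norms and applying the triangle inequality,
$$\Big\|\frac{x}{|x|}-\frac{y}{|y|}\Big\|\le \frac{\|x-y\|}{|x|}+|y|\cdot\frac{\big||x|-|y|\big|}{|x|\cdot|y|}=\frac{\|x-y\|}{|x|}+\frac{\big||x|-|y|\big|}{|x|},$$
where the crucial simplification is that $\|y\|=|y|$, which holds precisely because the coordinates of $y$ are non-negative; this lets the factor $|y|$ cancel in the second term.

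Finally I would feed in the two estimates from the first step together with the hypothesis $|x|\ge L$: both numerators on the right-hand side are bounded by $\|x-y\|\le 2m$, so the whole expression is at most $\dfrac{2\|x-y\|}{|x|}\le \dfrac{4m}{L}$, which is exactly the assertion. I do not anticipate a real obstacle: this is a routine computation, and the only two points needing attention are the cancellation $\|y\|=|y|$ (the single place where non-negativity of coordinates is used) and the harmless exclusion of $y=0$.
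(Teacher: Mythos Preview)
Your proof is correct and follows essentially the same route as the paper: the paper uses the coordinatewise identity $x_c\,|y|-y_c\,|x|=(x_c-y_c)\,|y|+y_c\,(|y|-|x|)$, which after dividing by $|x|\cdot|y|$ is precisely your vectorial splitting $\frac{x}{|x|}-\frac{y}{|y|}=\frac{x-y}{|x|}+y\big(\frac{1}{|x|}-\frac{1}{|y|}\big)$, and then bounds the two pieces by $2m/|x|$ each. One small remark: since you declared at the outset that both $|\cdot|$ and $\|\cdot\|$ denote the $l_1$-norm, the equality $\|y\|=|y|$ is a tautology and does not depend on non-negativity of the coordinates---in fact neither your argument nor the paper's genuinely needs that hypothesis.
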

\begin{proof}
$x_c\cdot |y|-y_c\cdot |x|=(x_c-y_c)\cdot |y|+y_c\cdot (|y|-|x|)$ for each $c\in C$. Therefore
$$|x_c\cdot |y|-y_c\cdot |x||\leq |y|+y_c\cdot 2m$$
and, summing up over all elements of $C$,
$$|x\cdot |y|-y\cdot |x||\leq 2m\cdot |y|+|y|\cdot 2m=4m\cdot |y|.$$
Finally,
$$\|\frac{x}{|x|}-\frac{y}{|y|}\|\leq \frac{4m}{|x|}\leq \frac{4m}{L}.$$
\end{proof}

\begin{Corollary}\label{ExistenceOfREpsilonPUs}
Suppose $X$ is an $\infty$-pseudo-metric space and $m, R, \epsilon > 0$. If $\VV=\{V_s\}_{s\in S}$ is a cover of $X$ of multiplicity at most $m$ at each point and of Lebesgue number at least $\frac{4m\cdot R}{\epsilon}$, then the partition of unity
$\phi_R^{\VV}$ is $(\epsilon, R)$-continuous.
\end{Corollary}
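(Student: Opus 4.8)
The plan is to recognize $\phi_R^{\VV}$ as the radial projection onto the unit sphere of $l_1$ of the vector-valued map $v(x):=(i_R(x,V_s))_{s\in S}$: by definition $\phi_R^{\VV}(x)=v(x)/|v(x)|$ at every point where all the indices $i_R(x,V_s)$ are finite, so, given $d(x,y)\le R$, it suffices to feed $v(x)$ and $v(y)$ into Lemma \ref{ProjectionOnUnitSphere}. Thus the proof reduces to three verifications: that $v$ is ``$1$-Lipschitz'' coordinate by coordinate, that the denominator $|v(x)|$ is large (here the Lebesgue hypothesis enters), and that at any pair of nearby points only at most $2m$ coordinates of $v$ are nonzero.

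First I would record the coordinatewise estimate: if $d(x,y)\le R$ then $|i_R(x,V_s)-i_R(y,V_s)|\le 1$ for every $s\in S$, and $i_R(x,V_s)=\infty$ exactly when $i_R(y,V_s)=\infty$. Both follow from one observation: a chain $y=y_0,y_1,\dots,y_k$ with $y_k\notin V_s$ and consecutive distances $\le R$ can be prolonged by prepending $x$ (allowed since $d(x,y)\le R$), giving such a chain of length $k+1$ from $x$; hence $i_R(x,V_s)\le i_R(y,V_s)+1$, and symmetrically. This also eliminates the exceptional clause in the definition of $\phi_R^{\VV}$: if some index at $x$ is infinite, then the same indices are infinite at $y$, and this set is finite of size $\le m$ (since $i_R(x,V_s)\ge 1$ forces $x\in V_s$), so by definition $\phi_R^{\VV}(x)=\phi_R^{\VV}(y)$ and there is nothing to prove. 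Hence I may assume all indices occurring at $x$ and $y$ are finite.

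Next I would bound $|v(x)|$ below using the Lebesgue hypothesis: there is $s_0\in S$ with $B(x,\tfrac{4mR}{\epsilon})\subseteq V_{s_0}$, so every chain $x=x_0,\dots,x_k$ with $x_k\notin V_{s_0}$ satisfies $\tfrac{4mR}{\epsilon}\le d(x,x_k)\le kR$, whence $k\ge\tfrac{4m}{\epsilon}$; therefore $i_R(x,V_{s_0})\ge\tfrac{4m}{\epsilon}$ and a fortiori $|v(x)|=\sum_{s\in S}i_R(x,V_s)\ge\tfrac{4m}{\epsilon}$. Since $i_R(x,V_s)>0$ only when $x\in V_s$, and likewise for $y$, the multiplicity bound shows that the set $C$ of indices $s$ with $x\in V_s$ or $y\in V_s$ has at most $2m$ elements, and all nonzero coordinates of $v(x)$ and of $v(y)$ lie on $C$. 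Regarding $v(x),v(y)$ as elements of $l_1(C)$ (padding $C$ to cardinality exactly $2m$ if one wishes to quote the lemma verbatim), they have non-negative coordinates, differ coordinatewise by at most $1$ by Step~1, and $|v(x)|\ge\tfrac{4m}{\epsilon}=:L$. Lemma \ref{ProjectionOnUnitSphere} then gives $\|\phi_R^{\VV}(x)-\phi_R^{\VV}(y)\|=\bigl\|\tfrac{v(x)}{|v(x)|}-\tfrac{v(y)}{|v(y)|}\bigr\|\le\tfrac{4m}{L}=\epsilon$, which is exactly $(\epsilon,R)$-continuity.

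I do not anticipate a real obstacle; everything needed has been prepared. The only point demanding care is Step~1 --- isolating the $\infty$-valued case so that it contributes nothing --- together with fixing the Lebesgue-number convention in force (``every ball of radius $\tfrac{4mR}{\epsilon}$ lies in a member of $\VV$''), since it is exactly this reading that makes the denominator estimate come out with the constant $\tfrac{4m}{\epsilon}$, matching the stated Lebesgue bound.
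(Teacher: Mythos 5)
Your proposal is correct and follows essentially the same route as the paper: the coordinatewise estimate $|i_R(x,V_s)-i_R(y,V_s)|\le 1$, the separate treatment of the $\infty$-valued indices, the observation that only a set $C$ of at most $2m$ indices carries nonzero values, the lower bound $\sum_{s}i_R(x,V_s)\ge 4m/\epsilon$ from the Lebesgue number, and then Lemma \ref{ProjectionOnUnitSphere}. You merely spell out (via the chain-prolongation argument and the ball-inside-a-member reading of the Lebesgue number) details the paper leaves implicit.
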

\begin{proof}
Suppose $d(x,y)\leq R$. $i_R(x,V_t)=\infty$ if and only $i_R(y,V_t)=\infty$ for any $t\in S$,
so $\phi_R^{\VV}(x)= \phi_R^{\VV}(y)$ in that case. Therefore assume $i_R(x,V_t) < \infty$
for all $t\in S$. In that case $|i_R(x,V_t)-i_R(y,V_t)|\leq 1$ for all $t\in S$.
Notice that there is a subset $C$ of $S$ of cardinality at most $2m$ such that $i_R(x,V_t)=i_R(y,V_t)=0$ for $t$ outside of $C$. At the same time the sum of $i_R(x,V_t)$, $t\in C$, is at least
$L:= \frac{4m}{\epsilon}$. Applying \ref{ProjectionOnUnitSphere}, one gets that $\phi_R^{\VV}$ is $(\epsilon, R)$-continuous.
\end{proof}

\section{Families satisfying a coarse property uniformly}

Given a coarse property $\mathscr{P}$, it is of interest to ponder the meaning of a family
$\{X_s\}_{s\in S}$ of $\infty$-pseudo-metric spaces to have $\mathscr{P}$ \textbf{uniformly}.
The way it is done in \cite{GuentnerPermanence} amounts to choosing a definition of $\mathscr{P}$ using some parameters and considering $\{X_s\}_{s\in S}$ to have $\mathscr{P}$ uniformly if the same parameters apply to all elements of $\{X_s\}_{s\in S}$ simultanously. However, it is not clear that, if we choose an equivalent definition of $\mathscr{P}$ using different parameters, the two versions of having $\mathscr{P}$ uniformly are equivalent. Therefore we propose a new way of defining the concept. To accomplish it, first we need to define wedges of pointed $\infty$-pseudo-metric spaces.

\begin{Definition}
Suppose $(A_s,x_s)_{s\in S}$ is a family of pointed $\infty$-pseudo-metric spaces.
By the \textbf{wedge} $\bigvee\limits_{s\in S} (A_s,x_s)$ of $(A_s,x_s)_{s\in S}$
we mean the subset of the cartesian product $\prod\limits_{s\in S} (A_s,x_s)$
consisting of all points whose coordinates are equal to $x_s$ for all $s\in S$ but possibly one. It has the natural base point $\{x_s\}_{s\in S}$ and the distance between two points
$\{a_s\}_{s\in S}$ and $\{b_s\}_{s\in S}$ of $\bigvee\limits_{s\in S} (A_s,x_s)$ is defined by
$$\sum\limits_{s\in S}d_s(a_s,b_s).$$
\end{Definition}

\begin{Definition}\label{RegularCoarseProperty}
A coarse property $\mathscr{P}$ is \textbf{regular} if it satisfies the following conditions:\\
1. If an $\infty$-pseudo-metric space $X$ satisfies $\mathscr{P}$ and $X_s$ is an isometric copy of $X$ for $s\in S$, then $\bigvee\limits_{s\in S} (X_s,x_s)$ has $\mathscr{P}$ for any choice of $x_s\in A_s$.\\
2. If each element of a finite family $\{X_s\}_{s\in S}$ of $\infty$-pseudo-metric spaces satisfies $\mathscr{P}$, then $\bigvee\limits_{s\in S} (X_s,x_s)$ has $\mathscr{P}$ for any choice of $x_s\in X_s$.
\end{Definition}

\begin{Corollary}
If a hereditary coarse property $\mathscr{P}$ has union permanence in the sense of Guentner  \cite{GuentnerPermanence}, then it is regular.
\end{Corollary}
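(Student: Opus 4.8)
The plan is to realize each wedge appearing in Definition \ref{RegularCoarseProperty} as a union of isometrically embedded copies of the given spaces together with one bounded set, and then to invoke Guentner's union permanence \cite{GuentnerPermanence}.

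The first step is the elementary observation that, for a family $(A_s,x_s)_{s\in S}$ of pointed $\infty$-pseudo-metric spaces, the map $\iota_s\colon A_s\to W:=\bigvee_{t\in S}(A_t,x_t)$ sending $a$ to the point with $s$-th coordinate $a$ and all other coordinates equal to the basepoints is an isometric embedding: if $a,b\in A_s$ then $\iota_s(a)$ and $\iota_s(b)$ agree outside the $s$-th coordinate, so the wedge distance collapses to $d_s(a,b)$. Moreover $W=\bigcup_{s\in S}\iota_s(A_s)$, the images $\iota_s(A_s)$ meet pairwise only at the basepoint $w_0$, and for $s\ne t$, $a\in A_s$, $b\in A_t$ one has $d_W(\iota_s(a),\iota_t(b))=d_s(a,x_s)+d_t(x_t,b)$. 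Consequently, putting $Y_R=\{w\in W : d_W(w,w_0)\le R\}$, the set $Y_R$ has diameter at most $2R$, hence is bounded and (being coarsely equivalent to a one-point space, which sits as a subspace of a $\mathscr P$-space) satisfies $\mathscr P$; while the sets $\iota_s(A_s)\setminus Y_R$, $s\in S$, are pairwise $R$-disjoint.

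With this in hand, condition (2) of Definition \ref{RegularCoarseProperty} is immediate: when $S$ is finite, $W$ is a \emph{finite} union of the subspaces $\iota_s(X_s)$, each isometric to the $\mathscr P$-space $X_s$, so the finite-union case of union permanence applies. For condition (1), the spaces $\iota_s(X_s)$ are all isometric to the single $\mathscr P$-space $X$, hence $\{\iota_s(X_s)\}_{s\in S}$ has $\mathscr P$ uniformly; and for every $R>0$ the decomposition $W=Y_R\cup\bigsqcup_{s\in S}\big(\iota_s(X_s)\setminus Y_R\big)$ displays $W$ as the union of the bounded (hence $\mathscr P$-)set $Y_R$ and an $R$-disjoint family of subsets of the $\iota_s(X_s)$; since $\mathscr P$ is hereditary each such subset has $\mathscr P$, uniformly in $s$. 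This is exactly the input required by the infinite-union form of Guentner's union permanence, which then yields $\mathscr P$ for $W$, and Definition \ref{RegularCoarseProperty} follows.

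The one genuinely delicate point is matching this construction to the precise hypotheses of \cite{GuentnerPermanence}: one must check that ``having $\mathscr P$ uniformly'' in Guentner's parameter-based sense is inherited by families of subspaces — so that $\{\iota_s(X_s)\setminus Y_R\}_{s\in S}$ indeed qualifies — and that a bounded piece may be absorbed among the excised pieces of an admissible decomposition. Both are routine, and for finite $S$ the uniformity requirement is vacuous, so the substantive content is the isometric-embedding computation for the wedge metric and the $R$-disjointness of the complements of $Y_R$ recorded above.
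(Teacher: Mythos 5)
Your argument is essentially the paper's own: decompose the wedge into the ball $B(y_0,r)$ around the basepoint together with the $r$-disjoint family of complements $\iota_s(X_s)\setminus B(y_0,r)$, observe that hereditarity makes this family (plus the bounded core) uniformly $\mathscr{P}$ in Guentner's sense, and invoke union permanence. The only difference is that you spell out the wedge-metric computations and treat the finite-wedge condition (2) explicitly via the finite-union case, which the paper leaves implicit; the substance is the same.
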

\begin{proof}
Suppose $X$ satisfies $\mathscr{P}$ and $A_s\subset X$ for $s\in S$. Consider $Y=\bigvee\limits_{s\in S} (A_s,x_s)$ for some choice of $x_s\in A_s$ and suppose $r > 0$.
Notice $Y\setminus B(y_0,r)$, $y_0$ being the basepoint of $Y$, splits into the union
of an $r$-disjoint family $A_s\setminus B(x_s,r)$ which, together with $B(y_0,r)$,
does have $\mathscr{P}$ uniformly in the sense of Guentner.
Part of union permanence for $\mathscr{P}$ is that $Y$ must have $\mathscr{P}$.
\end{proof}

\begin{Proposition}
Suppose $\mathscr{P}$ is a hereditary regular coarse property and $\{X_s\}_{s\in S}$ is a family of $\infty$-pseudo-metric spaces. If $\bigvee\limits_{s\in S} (X_s,x_s)$ has $\mathscr{P}$ for some choice of basepoints $x_s\in X_s$, then
$\bigvee\limits_{s\in S} (X_s,y_s)$ has $\mathscr{P}$ for every choice of basepoints $y_s\in X_s$.
\end{Proposition}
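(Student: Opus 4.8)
The plan is to realize $\bigvee_{s\in S}(X_s,y_s)$ as an isometric subspace of a wedge of isometric copies of the space $W:=\bigvee_{s\in S}(X_s,x_s)$, which has $\mathscr{P}$ by hypothesis; regularity will then hand $\mathscr{P}$ to the big wedge, and hereditarity will pass it down to the subspace. So the whole argument reduces to one structural observation plus one isometric embedding.

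The observation I would record first is the following elementary fact about wedges, used twice below. In $\bigvee_{t\in T}(A_t,a_t)$ fix an index $s$ and consider the ``$s$-wing'' $\{(b_t)_{t\in T}: b_t=a_t \text{ for all } t\neq s\}$; for two such points the wedge distance $\sum_t d_t(\cdot,\cdot)$ collapses to $d_s(\cdot,\cdot)$, so the $s$-wing, with the metric induced from the wedge, is isometric to $A_s$ via $b_s\mapsto(b_t)_t$, and this isometry carries $a_s$ to the basepoint of the wedge. In particular each $X_s$ sits isometrically inside $W$ as its $s$-wing, sending $x_s$ to the basepoint of $W$, so the given point $y_s\in X_s$ determines a distinguished point inside a copy of $W$.

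Next I would fix, for each $s\in S$, an isometric copy $W^{(s)}$ of $W$, and let $\tilde y_s\in W^{(s)}$ be the point of its $s$-wing corresponding to $y_s\in X_s$. Since every $W^{(s)}$ is an isometric copy of $W$, which satisfies $\mathscr{P}$, condition (1) in the definition of a regular coarse property yields that $V:=\bigvee_{s\in S}(W^{(s)},\tilde y_s)$ satisfies $\mathscr{P}$. Now define $j\colon\bigvee_{s\in S}(X_s,y_s)\to V$ by sending a point whose only possibly non-basepoint coordinate is $a_s\in X_s$ in the $s$-wing to the point of $V$ whose $s$-th coordinate is the image of $a_s$ under the embedding $X_s\hookrightarrow W^{(s)}$ of the previous paragraph and all of whose other coordinates are $\tilde y_t$. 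This is well defined (when $a_s=y_s$ both points are the respective basepoints, independently of the choice of $s$), and it is isometric: for $a_s$ in the $s$-wing and $b_t$ in the $t$-wing with $s\neq t$, both the domain distance and the $V$-distance equal $d_{X_s}(a_s,y_s)+d_{X_t}(b_t,y_t)$, while for $s=t$ both equal $d_{X_s}(a_s,b_s)$; this is the only computation in the proof, and it follows at once from the definition of the wedge metric and the wing observation. Since $\mathscr{P}$ is hereditary, the isometric copy $j\!\left(\bigvee_{s\in S}(X_s,y_s)\right)\subset V$ has $\mathscr{P}$, hence so does $\bigvee_{s\in S}(X_s,y_s)$.

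The only genuinely non-routine point — and the one I expect to be the conceptual obstacle — is seeing that one must ``fatten'' each $X_s$ to a full copy of $W$ before re-wedging at the new basepoints $\tilde y_s$. This is exactly what creates enough room to move the basepoint of each factor from $x_s$ to $y_s$ without forcing distinct wings to collapse onto a common point; a naive attempt to map $\bigvee_{s\in S}(X_s,y_s)$ directly into $\bigvee_{s\in S}(X_s,x_s)$ fails precisely because the old basepoint would have to be sent to $|S|$ different points at once. Once the right target space is chosen, the rest is bookkeeping with the wedge metric.
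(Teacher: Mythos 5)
Your proof is correct and follows essentially the same route as the paper: the paper also forms the wedge $\bigvee_{s\in S}(Y,y_s)$ of copies of $Y=\bigvee_{s\in S}(X_s,x_s)$ based at the points corresponding to $y_s$, applies condition (1) of regularity, and extracts $\bigvee_{s\in S}(X_s,y_s)$ as an isometric subspace using hereditarity. You have simply written out in detail the isometric embedding and well-definedness check that the paper leaves implicit.
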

\begin{proof}
Put $Y=\bigvee\limits_{s\in S} (X_s,x_s)$ and consider $y_s\in Y$ to be the point corresponding to
$y_s\in X_s$. Notice $\bigvee\limits_{s\in S} (Y,y_s)$ contains an isometric copy
of $\bigvee\limits_{s\in S} (X_s,y_s)$, hence $\bigvee\limits_{s\in S} (X_s,y_s)$
has $\mathscr{P}$.
\end{proof}

\begin{Definition}
Suppose $\mathscr{P}$ is a regular coarse property and $\{X_s\}_{s\in S}$ is a family of $\infty$-pseudo-metric spaces. We say that $\{X_s\}_{s\in S}$ 
has $\mathscr{P}$ \textbf{uniformly} if $\bigvee\limits_{s\in S} (X_s,x_s)$ has $\mathscr{P}$
for any choice of basepoints $x_s\in X_s$.
\end{Definition}

\begin{Observation} One can extend the above definition to a class of $\infty$-pseudo-metric spaces as follows:\\
Suppose $\mathscr{P}$ is a regular coarse property. A class $\mathcal{C}$ of $\infty$-pseudo-metric spaces satisfies $\mathscr{P}$ uniformly if for any family $\{X_s\}_{s\in S}$
in $\mathcal{C}$,  \textbf{uniformly} if $\bigvee\limits_{s\in S} (X_s,x_s)$ has $\mathscr{P}$
for any choice of basepoints $x_s\in X_s$.
\end{Observation}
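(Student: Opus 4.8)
The displayed Observation is in essence a definition --- $\mathcal{C}$ satisfies $\mathscr{P}$ uniformly precisely when every indexed family of members of $\mathcal{C}$ satisfies $\mathscr{P}$ uniformly in the sense of the preceding Definition --- so there is no deduction to carry out; instead, a ``proof'' should record the sanity checks that make this a sensible notion. I would check three points. (i) Each member of such a $\mathcal{C}$ has $\mathscr{P}$: for $Y\in\mathcal{C}$, the family indexed by a one-point set with single entry $Y$ has wedge isometric to $Y$ itself, so $Y$ inherits $\mathscr{P}$ from the hypothesis. (ii) If $\mathcal{C}$ is a single isometry class whose member has $\mathscr{P}$, then $\mathcal{C}$ has $\mathscr{P}$ uniformly --- this is literally part 1 of the definition of a regular coarse property. (iii) More generally, if $\mathcal{C}$ is finite and each member has $\mathscr{P}$, then $\mathcal{C}$ has $\mathscr{P}$ uniformly.

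For (iii) I would argue as follows. Write $\mathcal{C}=\{Y_1,\dots,Y_k\}$, assume each $Y_i$ has $\mathscr{P}$, and let $\{X_s\}_{s\in S}$ be any indexed family in which each $X_s$ is an isometric copy of some $Y_i$, with arbitrary basepoints $x_s\in X_s$. Partition $S=S_1\sqcup\dots\sqcup S_k$ according to which $Y_i$ the entry $X_s$ copies, discarding empty blocks. A point of $\bigvee_{s\in S}(X_s,x_s)$ has at most one non-basepoint coordinate, hence lies over a single block, and additivity of the wedge metric yields an isometry
$$\bigvee_{s\in S}(X_s,x_s)\ \cong\ \bigvee_{i=1}^{k}\Bigl(\bigvee_{s\in S_i}(X_s,x_s),\ \ast_i\Bigr),$$
with $\ast_i$ the basepoint of the $i$-th inner wedge. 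Part 1 of regularity, applied to $Y_i$ and its isometric copies $\{X_s\}_{s\in S_i}$, shows each inner wedge has $\mathscr{P}$; part 2 of regularity, applied to the finite family of these $k$ wedges, shows $\bigvee_{s\in S}(X_s,x_s)$ has $\mathscr{P}$. Since the basepoints were arbitrary, $\mathcal{C}$ satisfies $\mathscr{P}$ uniformly --- this is (iii), and (ii) is the special case $k=1$.

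The only step requiring any care --- the ``hard part'', mild as it is --- is the wedge-decomposition isometry: that a wedge over a disjoint union of index sets is the wedge of the wedges over the blocks, the outer wedge being a finite wedge when there are finitely many blocks. This is immediate from the defining constraint on the wedge (at most one off-basepoint coordinate) together with the additivity of the wedge metric, so I do not expect any genuine obstacle. In the end there is no deeper content: the extension to classes is forced once one has the two clauses of regularity together with points (i) and (ii).
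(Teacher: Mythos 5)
You are right that this Observation is a definitional extension rather than a theorem: the paper states it without any proof, exactly as you read it, so there is nothing to compare beyond your sanity checks. Those checks are sound --- the one-point family gives (i), clause 1 of the definition of a regular coarse property gives (ii), and your wedge-of-wedges decomposition (valid because a wedge point has at most one off-basepoint coordinate and the wedge metric is additive) combined with clause 1 on each block and clause 2 on the finite outer wedge correctly handles (iii) --- so nothing further is needed.
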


\begin{Observation}
An example of a coarse property that is not regular is coarse embeddability in reals.
Notice that a wedge of two copies of reals does not embed into reals. That also means that satisfying a coarse property in the sense of Guentner is more general than discussed in this paper.
\end{Observation}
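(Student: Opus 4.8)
The Observation has only one substantive assertion, namely that the wedge $Y:=\bigvee\limits_{s\in\{1,2\}}(\RR,0)$ of two copies of the line does not coarsely embed into $\RR$. Granting this, the rest is formal: coarse embeddability into $\RR$ is a coarse property (a coarse equivalence is a coarse embedding, and a composite of coarse embeddings is a coarse embedding), and $\RR$ itself has it; so if this property were regular, condition~1 of Definition~\ref{RegularCoarseProperty} applied to $X=\RR$ and a two-element index set would force $Y$ to embed coarsely into $\RR$, which is false. Hence the property is not regular. Finally, since Guentner's parametrized notion of satisfying a coarse property still makes formal sense for such a property, it is strictly more general than the framework set up here --- at the cost, noted above, of depending a priori on the chosen parametrization.

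It remains to prove the claim. First I would record an elementary lemma: if $g\colon[0,\infty)\to\RR$ is a coarse embedding, with control functions $\rho_-\le\rho_+$ and $\rho_-(t)\to\infty$, then $\lim_{t\to\infty}g(t)$ equals $+\infty$ or $-\infty$. The proof is a pigeonhole on the values of $g$: one has $|g(n+1)-g(n)|\le K:=\rho_+(1)$, so $g$ moves by at most $K$ over each unit step; if $g$ tended to neither infinity then, considering integer arguments, $g$ would cross some fixed finite level $a$ infinitely often and hence land in the bounded window $[a-K,a+K]$ at arguments $w_1<w_2<\dots$ tending to infinity, whereupon $|g(w_j)-g(w_k)|\le 2K$ for all $j,k$ would contradict $\rho_-(|w_j-w_k|)\to\infty$. (The case analysis behind ``$g$ crosses a fixed level infinitely often'' --- separating the sub-cases in which $g$ is bounded on one side, the other, or neither --- is the only mildly delicate point.)

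Now suppose, for contradiction, that $f\colon Y\to\RR$ is a coarse embedding with control functions $\rho_\pm$. The space $Y$ is a union of four rays issuing from the wedge point $o$ (the two half-lines of each of the two copies of $\RR$), and from the definition of the wedge metric, for any two \emph{distinct} rays, parametrized by arclength from $o$ as $\gamma_1,\gamma_2$, one has $d\bigl(\gamma_1(t),\gamma_2(s)\bigr)=t+s$ for all $t,s\ge0$. Restricting $f$ to each ray and applying the lemma, each ray is sent by $f$ to $+\infty$ or to $-\infty$; by the pigeonhole principle two distinct rays $\gamma_1,\gamma_2$ are sent to the same one, say to $+\infty$. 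For each large $V$ let $t_V$ be the least integer with $f(\gamma_1(t_V))\ge V$ and $s_V$ the least integer with $f(\gamma_2(s_V))\ge V$. The $K$-step bound then places both $f(\gamma_1(t_V))$ and $f(\gamma_2(s_V))$ in $[V,V+K)$, so $\bigl|f(\gamma_1(t_V))-f(\gamma_2(s_V))\bigr|<K$. On the other hand $f$ is bounded on bounded subsets of $Y$, so $t_V,s_V\to\infty$ as $V\to\infty$, and therefore $d\bigl(\gamma_1(t_V),\gamma_2(s_V)\bigr)=t_V+s_V\to\infty$. This contradicts $\rho_-(t_V+s_V)\le\bigl|f(\gamma_1(t_V))-f(\gamma_2(s_V))\bigr|<K$ as soon as $\rho_-(t_V+s_V)$ exceeds $K$.

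The main obstacle is not conceptual but organizational: proving the half-line lemma cleanly (the case split mentioned above) and verifying that the ``crossing'' indices $t_V,s_V$ really do go to infinity, so that the distance $t_V+s_V$ does too. Everything else is a soft ends-counting argument --- $Y$ has four ends, $\RR$ has two, and a coarse embedding cannot collapse two ends of the source onto a single end of $\RR$.
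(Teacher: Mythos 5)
Your argument is correct, and it in fact supplies more than the paper does: the Observation is stated without proof, the non-embeddability of the wedge of two lines being treated as self-evident. Your ends-counting proof (each of the four rays must go to $+\infty$ or $-\infty$ under a coarse embedding, two rays must share a target, and the $K$-step/pigeonhole argument then contradicts the lower control function $\rho_-$ at distance $t_V+s_V\to\infty$) is exactly the standard way to justify that claim, and the formal deduction from Definition~\ref{RegularCoarseProperty} is also right.
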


\begin{Observation}
Suppose $\mathscr{P}$ is a regular coarse property and $\{X_s\}_{s\in S}$ is a family of $\infty$-pseudo-metric spaces. If $S$ is finite and each $X_s$ has $\mathscr{P}$, then
 $\{X_s\}_{s\in S}$ 
has $\mathscr{P}$ uniformly.
\end{Observation}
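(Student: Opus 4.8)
The plan is to observe that this is essentially a direct consequence of part~2 of Definition~\ref{RegularCoarseProperty} together with the definition of a family having $\mathscr{P}$ uniformly. First I would fix an arbitrary choice of basepoints $x_s\in X_s$, $s\in S$. Since $S$ is finite and each $X_s$ satisfies $\mathscr{P}$, part~2 of the definition of a regular coarse property applies verbatim to the finite family $\{X_s\}_{s\in S}$ and yields that $\bigvee\limits_{s\in S}(X_s,x_s)$ has $\mathscr{P}$. As the choice of basepoints was arbitrary, the family $\{X_s\}_{s\in S}$ has $\mathscr{P}$ uniformly by definition, and that is all that needs to be shown.

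If one preferred to rely on only a two-fold version of the wedge axiom, I would instead argue by induction on $|S|$: picking $s_0\in S$ and writing $\bigvee\limits_{s\in S}(X_s,x_s)$ as the wedge of $X_{s_0}$ with $\bigvee\limits_{s\in S\setminus\{s_0\}}(X_s,x_s)$ (identified along the common basepoint), the inductive hypothesis gives $\mathscr{P}$ for the smaller wedge, and then part~2 applied to the two-element family closes the induction. The base case $|S|=1$ is trivial, the wedge being just $X_{s_0}$ itself.

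I do not anticipate any genuine obstacle here. The only point requiring a moment's care is that the wedge metric—the $\ell_1$-sum of the coordinate distances—is associative up to isometry, so that in the inductive argument the iterated wedge agrees with the single wedge taken over all of $S$; this is immediate from the definition of the wedge.
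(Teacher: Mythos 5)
Your first paragraph is exactly the intended argument: the paper states this as an Observation with no proof precisely because part~2 of the definition of a regular coarse property, which already covers an arbitrary choice of basepoints, combined with the definition of satisfying $\mathscr{P}$ uniformly, gives the claim immediately. The inductive variant is unnecessary but harmless; your direct argument is correct and matches the paper's (implicit) reasoning.
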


\begin{Proposition} \label{BallEnlargementProp}
Suppose $\mathscr{P}$ is a regular coarse property.
Given a family $\{X_s\}_{s\in S}$ of $\infty$-pseudo-metric spaces, given $r > 0$, and given subsets
$A_s$ of $X_s$ for each $s\in S$, if $\{A_s\}_{s\in S}$ satisfies $\mathscr{P}$ uniformly, then so does
the family $\{B(A_s,r)\}_{s\in S}$.
\end{Proposition}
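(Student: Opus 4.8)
The plan is to fix an arbitrary choice of basepoints $y_s\in B(A_s,r)$ and to produce a coarse equivalence between $W':=\bigvee\limits_{s\in S}(B(A_s,r),y_s)$ and a wedge of the spaces $A_s$ that is already known to satisfy $\mathscr{P}$; the conclusion then follows because $\mathscr{P}$, being a coarse property, is invariant under coarse equivalences.

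For each $s\in S$ pick $a_s\in A_s$ with $d(y_s,a_s)\le r$ (possible since $y_s\in B(A_s,r)$) and a ``nearest-point'' map $\pi_s\colon B(A_s,r)\to A_s$ satisfying $d(z,\pi_s(z))\le r$ for every $z$ and $\pi_s(y_s)=a_s$. Since $\{A_s\}_{s\in S}$ has $\mathscr{P}$ uniformly, the wedge $W:=\bigvee\limits_{s\in S}(A_s,a_s)$ has $\mathscr{P}$. I will write a non-basepoint point of $W'$ as $\iota_s(z)$, meaning the point having $z\in B(A_s,r)$ in the $s$-th coordinate and $y_t$ in all the others, and likewise $j_s(w)$ for the corresponding points of $W$.

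Define $f\colon W'\to W$ by $f(\iota_s(z))=j_s(\pi_s(z))$, with the basepoint of $W'$ sent to the basepoint of $W$; the requirement $\pi_s(y_s)=a_s$ is exactly what makes this well defined at the wedge point. Define $g\colon W\to W'$ by $g(j_s(w))=\iota_s(w)$ for $w\ne a_s$, with basepoint going to basepoint. Using the wedge distance formulas $d(\iota_s(z),\iota_s(z'))=d(z,z')$ and $d(\iota_s(z),\iota_t(z'))=d(z,y_s)+d(y_t,z')$ for $s\ne t$ (and their analogues in $W$), together with the bounds $d(z,\pi_s(z))\le r$ and $d(y_s,a_s)\le r$, a short case analysis — same coordinate versus different coordinates — shows that $f$ and $g$ are bornologous with additive control a fixed multiple of $r$, and that $f\circ g$ and $g\circ f$ displace every point by at most $2r$. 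Hence $f$ is a coarse equivalence, so $W'$ has $\mathscr{P}$, and since $y_s$ was arbitrary $\{B(A_s,r)\}_{s\in S}$ has $\mathscr{P}$ uniformly.

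The substantive point — everything else being routine estimates — is the bookkeeping at the base point of the wedge: one must choose $a_s$ and $\pi_s$ compatibly, so that $f$ is well defined and sends basepoint to basepoint, and one must observe that in both $W'$ and $W$ the distance between points lying in different factors is a sum of two terms measured from the respective basepoints, so that replacing $y_s$ by $a_s$ changes those distances only by an additive constant. Conceptually the proposition just records that $A_s\hookrightarrow B(A_s,r)$ is ``uniformly coarsely dense'' and that forming wedges turns a uniformly bounded system of such approximate retractions into a single coarse equivalence of the total spaces.
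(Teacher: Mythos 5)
Your argument is correct and is essentially the paper's own proof: the paper likewise fixes arbitrary basepoints $x_s\in B(A_s,r)$, picks nearby points of $A_s$, and observes that the map $\bigvee_{s\in S}(A_s,y_s)\to\bigvee_{s\in S}(B(A_s,r),x_s)$ which is the identity off the basepoints (your $g$) is a coarse equivalence, so the wedge of the enlargements inherits $\mathscr{P}$. The only cosmetic difference is that you additionally build an explicit coarse inverse via nearest-point projections $\pi_s$, which is not needed once one notes that $g$ is roughly isometric with $2r$-dense image (and, as with the paper's choice of $y_s$, one should allow $d(z,\pi_s(z))\leq r$ up to a harmless additive slack, since exact nearest points need not exist in an $\infty$-pseudo-metric space).
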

\begin{proof}
Given $x_s\in B(A_s,r)$ for each $s\in S$, choose $y_s\in A_s$ so that $d(x_s,y_s)\leq r$ for each $s\in S$. Notice that any function $f:\bigvee\limits_{s\in S} (A_s,y_s)\to \bigvee\limits_{s\in S} (B(A_s,r),x_s)$,
which is the identity on $A_s\setminus \{y_s\}$ and sends $y_s$ to $x_s$, is a coarse equivalence.
\end{proof}

\begin{Proposition}\label{BoundedUnionProp}
Suppose $\mathscr{P}$ is a regular coarse property.
Given a family $\{X_s\}_{s\in S}$ of subsets of an $\infty$-pseudo-metric space $X$
that satisfies $\mathscr{P}$ uniformly,  the union $\bigcup\limits_{s\in S} X_s$ satisfies $\mathscr{P}$
if there is a point $x_0\in \bigcap\limits_{s\in S} X_s $ with the property that for each $r > 0$
there is $t > 0$ so that the family $\{X_s\setminus B(x_0,t)\}_{s\in S}$ is $r$-disjoint.
\end{Proposition}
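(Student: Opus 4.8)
The plan is to realize $\bigcup_{s\in S} X_s$ as coarsely equivalent to the wedge $W:=\bigvee_{s\in S}(X_s,x_0)$. Since $x_0\in\bigcap_{s\in S}X_s$ and $\{X_s\}_{s\in S}$ has $\mathscr{P}$ uniformly, the definition of ``uniformly'' applied with the common basepoint $x_0$ gives that $W$ has $\mathscr{P}$; as $\mathscr{P}$ is a coarse property it is invariant under coarse equivalences, so this will complete the proof.

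I would use the obvious ``collapse'' map $f\colon W\to\bigcup_{s\in S}X_s$ sending the base point to $x_0$ and sending a point of $W$ that differs from the base point only in the $s$-th coordinate, with value $a\in X_s$ there, to the point $a\in X$. This is well defined and onto. It is $1$-Lipschitz: writing $p,q\in W$ with non-base coordinates in positions $s$ and $s'$ and values $a,b$, one has $d_W(p,q)=d_X(a,b)$ when $s=s'$, and $d_W(p,q)=d_X(a,x_0)+d_X(x_0,b)\ge d_X(a,b)$ when $s\ne s'$, the inequality being trivial if some term is $\infty$ and the few cases where $p$ or $q$ is the base point being identical. Hence $d_X(f(p),f(q))\le d_W(p,q)$ always.

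The substantive point is the ``effective properness'' estimate, and this is precisely where the disjointness hypothesis is used. Given $p,q\in W$ with $d_X(f(p),f(q))\le r$: if their non-base coordinates lie in the same position the $W$-distance is again $\le r$; otherwise they lie in distinct positions $s\ne s'$ with values $a\in X_s$, $b\in X_{s'}$, and we choose $t>0$ with $\{X_u\setminus B(x_0,t)\}_{u\in S}$ being $r$-disjoint. Since $a$ and $b$ belong to distinct members of this family and $d_X(a,b)\le r$, one of them — say $a$ — must lie in $B(x_0,t)$, whence $d_X(x_0,b)\le d_X(x_0,a)+d_X(a,b)\le t+r$ and $d_W(p,q)=d_X(a,x_0)+d_X(x_0,b)\le r+2t$. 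So for each $r$ the $W$-distance of preimages is bounded in terms of the ambient distance; thus $f$ is effectively proper, and combined with being $1$-Lipschitz and onto it is a coarse equivalence, so $\bigcup_{s\in S}X_s$ has $\mathscr{P}$. The only real work is the careful bookkeeping of the wedge metric in the presence of $\infty$-valued distances; conceptually, the $r$-disjointness hypothesis is exactly what lets one recover the wedge distance, up to the additive constant $2t$, from the ambient distance away from a controlled neighborhood of $x_0$.
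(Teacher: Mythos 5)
Your proof is correct and follows essentially the same route as the paper: the paper's proof also takes the collapse map $f\colon\bigvee_{s\in S}(X_s,x_0)\to\bigcup_{s\in S}X_s$ that is the identity on each $X_s$ and simply notes it is a large scale equivalence, so that regularity (the wedge at the common basepoint $x_0$ having $\mathscr{P}$) and coarse invariance of $\mathscr{P}$ finish the argument. You merely supply the $1$-Lipschitz and effective-properness estimates (the latter using the $r$-disjointness of $\{X_s\setminus B(x_0,t)\}_{s\in S}$) that the paper leaves unstated.
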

\begin{proof}
Consider $f:\bigvee\limits_{s\in S} (X_s,x_0)\to \bigcup\limits_{s\in S} X_s$ that is the identity
on each $X_s$. Notice $f$ is a large scale equivalence.
\end{proof}

\section{Coarse properties via partitions of unity}
There are three basic ways of defining coarse properties:\\
1. Via covers,\\
2. Via $R$-disjoint families,\\
3. Via partitions of unity.

This section is devoted to the third thread of defining coarse properties and to comparing it to two other threads. Also, we are interested if coarse properties defined that way are regular.

\begin{Definition}
Suppose $X$ is an $\infty$-pseudo-metric space. We say its coarse structure is \textbf{determined by partitions of unity} (by \textbf{point-finite partitions of unity}, respectively) if for every $R > 0$
there is a partition of unity (a point-finite partition of unity, respectively) whose strata are uniformly bounded and of Lebesgue number at least $R$.
\end{Definition}

\begin{Observation}\label{RDisjointObservation}
Suppose $X$ is an $\infty$-pseudo-metric space. If, for each $R > 0$, $X$ can be expressed as a finite union $X=\bigcup\limits_{i=1}^m X_i$ and each $X_i$ is the union of an $R$-disjoint family of uniformly bounded subsets of $X$,
then the coarse structure of $X$ is determined by point-finite partitions of unity.
\end{Observation}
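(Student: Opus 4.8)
The plan is, for a given $R>0$, to build the required partition of unity by feeding a suitably inflated parameter into the hypothesis and then replacing the resulting $R$-disjoint families by smooth ``tent'' functions built from distance functions. First I would apply the hypothesis with parameter $5R$, obtaining a decomposition $X=\bigcup_{i=1}^{m}X_{i}$ in which each $X_{i}=\bigcup_{s\in S_{i}}V^{(i)}_{s}$, the family $\{V^{(i)}_{s}\}_{s\in S_{i}}$ is $5R$-disjoint, and every $V^{(i)}_{s}$ has diameter at most a fixed number $D$. I would merge these into a single indexed family by putting $S=\bigsqcup_{i=1}^{m}S_{i}$ and $V_{\sigma}=V^{(i)}_{s}$ for $\sigma=(i,s)$, and for each $\sigma\in S$ define $\psi_{\sigma}\colon X\to[0,\infty)$ by $\psi_{\sigma}(x)=\max\bigl(0,\,2R-d(x,V_{\sigma})\bigr)$, with the convention $d(x,\emptyset)=\infty$.

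The next step is to record two facts. First, for a fixed $i$ at most one index $s\in S_{i}$ has $\psi_{(i,s)}(x)>0$: if $\psi_{(i,s)}(x)>0$ and $\psi_{(i,s')}(x)>0$ with $s\neq s'$, there would be $v\in V^{(i)}_{s}$ and $v'\in V^{(i)}_{s'}$ with $d(x,v)<2R$ and $d(x,v')<2R$, and (since these distances are finite) the triangle inequality gives $d(v,v')<4R<5R$, contradicting $5R$-disjointness. Hence at most $m$ of the numbers $\psi_{\sigma}(x)$ are nonzero, so $\Sigma(x):=\sum_{\sigma\in S}\psi_{\sigma}(x)$ is finite. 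Second, $\Sigma(x)\ge 2R>0$ for every $x\in X$, because $x$ lies in some $X_{i}$ and hence in some $V_{\sigma}$, where $\psi_{\sigma}(x)=2R$. I would then set $\phi_{\sigma}=\psi_{\sigma}/\Sigma$; by the two facts this is a well-defined partition of unity of multiplicity at most $m$, in particular point-finite. Its stratum at $\sigma$ is $\phi_{\sigma}^{-1}(0,1]=\{x:d(x,V_{\sigma})<2R\}\subseteq B(V_{\sigma},2R)$, which has diameter at most $D+4R$, so the strata are uniformly bounded. For the Lebesgue number, given $x\in X$ pick $\sigma$ with $x\in V_{\sigma}$; then any $y$ with $d(x,y)\le R$ satisfies $d(y,V_{\sigma})\le R<2R$, so $y$ lies in the stratum at $\sigma$. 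Thus every subset of diameter at most $R$ (equivalently every $R$-ball) is contained in a single stratum, i.e. the Lebesgue number of $\phi$ is at least $R$. Since $R>0$ was arbitrary, the coarse structure of $X$ is determined by point-finite partitions of unity.

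The verifications above are routine; the only place where any care is needed is the bookkeeping of constants: the enlargement radius $2R$ must exceed $R$ so that an $R$-ball is absorbed into a stratum, while the parameter $5R$ passed to the hypothesis must be large enough that the $2R$-enlargements of a fixed $5R$-disjoint family remain disjoint, and throughout one must check that the triangle inequality is being invoked only on pairs at finite distance, as required in the $\infty$-pseudo-metric setting. (One could instead apply Corollary~\ref{ExistenceOfREpsilonPUs} to the cover $\{B(V_{\sigma},2R)\}_{\sigma\in S}$, but the direct construction is cleaner here since $(\epsilon,R)$-continuity is not needed for this statement.)
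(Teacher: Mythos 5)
Your proposal is correct and follows essentially the same route as the paper: apply the hypothesis with an inflated parameter, enlarge the disjoint sets, and normalize to obtain a point-finite partition of unity whose strata are uniformly bounded with Lebesgue number at least $R$. The only cosmetic difference is that the paper normalizes characteristic functions of the $R$-enlargements of a $3R$-disjoint family, while you normalize tent functions $\max(0,2R-d(x,V_\sigma))$ built from a $5R$-disjoint family.
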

\begin{proof}
Express $X$ as a finite union $X=\bigcup\limits_{i=1}^m X_i$, where each $X_i$ is the union of an $3R$-disjoint family $\{X_i^j\}_{j\in S(i)}$ of uniformly bounded subsets of $X$. The partition of unity obtained by normalizing the
sum of characteristic functions of sets $B(X_i^j,R)$, $i\leq m$, $j\in S(i)$, has strata that form a uniformly bounded family of Lebesgue number at least $R$.
\end{proof}

\begin{Definition}
Suppose $X$ is an $\infty$-pseudo-metric space. We say $X$ is \textbf{exact} if for every $R, \epsilon > 0$
there is an $(\epsilon,R)$-continuous partition of unity whose strata are uniformly bounded.
\end{Definition}

\begin{Definition}
Suppose $X$ is an $\infty$-pseudo-metric space. We say $X$ is \textbf{large scale paracompact} if for every $R, \epsilon > 0$
there is an $(\epsilon,R)$-continuous point-finite partition of unity whose strata are uniformly bounded and of Lebesgue number at least $R$.
\end{Definition}

\begin{Definition}
Suppose $X$ is an $\infty$-pseudo-metric space.
Given a regular coarse property $\mathscr{P}$,
the \textbf{asymptotic dimension} of $X$ with respect to $\mathscr{P}$ is at most $n$ if 
if for every $R, \epsilon > 0$
there is an $(\epsilon,R)$-continuous partition of unity $\phi$ whose strata have $\mathscr{P}$ uniformly and the dimension of $\phi$ is at most $n$.
\end{Definition}

\begin{Theorem}\label{CharAsdimWRTP}
Given a regular coarse property $\mathscr{P}$, given $n\ge 0$, and given an $\infty$-pseudo-metric space $X$, the following conditions are equivalent:\\
1. The asymptotic dimension of $X$ with respect to $\mathscr{P}$ is at most $n$.\\
2. For all real numbers $R$,
$X$ decomposes as a finite union $\bigcup\limits_{k=0}^n X_k$
and each $X_i$ is an $R$-disjoint union of sets having $\mathscr{P}$ uniformly.\\
3. For all real numbers $R$,
$X$ has a cover of Lebesgue number at least $R$, multiplicity at most $n+1$, and having $\mathscr{P}$ uniformly.
\end{Theorem}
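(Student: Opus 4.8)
The plan is to prove the cycle of implications $(1)\Rightarrow(3)\Rightarrow(2)\Rightarrow(1)$, using the machinery of the previous sections to convert between partitions of unity, covers, and $R$-disjoint decompositions.

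For $(1)\Rightarrow(3)$: fix $R>0$ and apply (1) with parameters $\epsilon=\tfrac12$ (say) and a scale $R'$ to be chosen. This yields an $(\tfrac12,R')$-continuous partition of unity $\phi=\{\phi_s\}$ of dimension at most $n$ whose strata have $\mathscr{P}$ uniformly. From $\phi$ one extracts a cover by enlarging the strata: set $V_s=\phi_s^{-1}(\alpha,1]$ for a suitable threshold $\alpha$. Because the multiplicity of $\phi$ is at most $n+1$, at each point $x$ some coordinate $\phi_s(x)\ge \tfrac1{n+1}$, and $(\epsilon,R')$-continuity forces that coordinate to stay above $\tfrac1{n+1}-\epsilon$ throughout the $R'$-ball around $x$; choosing $\epsilon$ small relative to $\tfrac1{n+1}$ and $R'\ge R$ makes $\{V_s\}$ a cover of Lebesgue number at least $R$ with multiplicity at most $n+1$. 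The strata $V_s$ are subsets of the strata of $\phi$ (which have $\mathscr{P}$ uniformly), and since $\mathscr{P}$ is hereditary it should follow that $\{V_s\}$ has $\mathscr{P}$ uniformly as well — this is where one uses regularity/heredity of $\mathscr{P}$, and the point to be careful about is that passing to a subfamily of strata preserves ``$\mathscr{P}$ uniformly'', which follows because a wedge over the subfamily embeds isometrically as a pointed subspace of the wedge over the full family.

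For $(3)\Rightarrow(2)$: given $R$, take a cover $\VV=\{V_s\}$ of Lebesgue number at least $(2n+3)R$, multiplicity at most $n+1$, with $\mathscr{P}$ uniformly. Use the standard coloring trick: a cover of multiplicity $\le n+1$ can have its index set partitioned into $n+1$ color classes $S_0,\dots,S_n$ so that within each class the members are pairwise disjoint; in fact, by shrinking each $V_s$ slightly (replacing $V_s$ by $V_s$ minus a neighborhood of the sets of higher color that it meets, using the Lebesgue number to ensure nothing essential is lost) one arranges that within each color the members are $R$-disjoint and still cover $X$. Set $X_k=\bigcup_{s\in S_k} V_s'$ where $V_s'$ is the shrunk set. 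Each $X_k$ is then an $R$-disjoint union of sets that are subsets of members of $\VV$, hence have $\mathscr{P}$ uniformly by heredity. This is the step I expect to be the main obstacle: arranging the $R$-disjointness of same-colored shrunk sets while keeping them a cover requires a careful choice of how much to shrink, and one must verify that the shrunk pieces still inherit $\mathscr{P}$ uniformly (again reducing to the sub-wedge embedding into a wedge of subspaces of the $V_s$).

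For $(2)\Rightarrow(1)$: given $R,\epsilon>0$, apply (2) at a large scale to get $X=\bigcup_{k=0}^n X_k$ with each $X_k$ an $R_0$-disjoint union of sets $\{X_k^j\}_j$ having $\mathscr{P}$ uniformly, where $R_0$ is large relative to $R$ and $n/\epsilon$. Form the cover $\{B(X_k^j, R_0/3)\}$; normalizing characteristic functions (or using the index functions $i_R(\cdot,\cdot)$ of the earlier section together with Corollary \ref{ExistenceOfREpsilonPUs}) produces an $(\epsilon,R)$-continuous partition of unity whose strata are the enlarged sets $B(X_k^j,R_0/3)$ and whose multiplicity is at most $n+1$ (since distinct $j$ within a color stay disjoint after enlargement, as $R_0$-disjoint enlarged by $R_0/3$ remains disjoint). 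By Proposition \ref{BallEnlargementProp} the enlarged sets $\{B(X_k^j,R_0/3)\}$ still satisfy $\mathscr{P}$ uniformly, so $\phi$ witnesses that the asymptotic dimension of $X$ with respect to $\mathscr{P}$ is at most $n$. The only care needed is bookkeeping the constants so that the $(\epsilon,R)$-continuity comes out with the prescribed $\epsilon$, which is exactly what Corollary \ref{ExistenceOfREpsilonPUs} is designed to deliver.
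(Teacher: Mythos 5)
Your steps $(1)\Rightarrow(3)$ and $(2)\Rightarrow(1)$ are sound: the threshold sets $V_s=\phi_s^{-1}(\alpha,1]$ with $\epsilon<\frac{1}{2(n+1)}$ and $\alpha=\frac{1}{2(n+1)}$ do give a cover of Lebesgue number at least $R$ and multiplicity at most $n+1$, and the passage from an $R_0$-disjoint colored decomposition to an $(\epsilon,R)$-continuous partition of unity via enlarged sets, \ref{BallEnlargementProp} and \ref{ExistenceOfREpsilonPUs} is exactly the intended use of that machinery (it is how the paper handles $2)\Rightarrow 3)\Rightarrow 1)$). The genuine gap is in your $(3)\Rightarrow(2)$. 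The ``standard coloring trick'' you invoke --- that a cover of multiplicity at most $n+1$ admits a partition of its index set into $n+1$ classes with same-colored members pairwise disjoint --- is false. Multiplicity bounds the clique-with-a-common-point number of the nerve, not the chromatic number of the intersection graph: for any graph $G$, assigning to each vertex $v$ the set $A_v$ of edges incident to $v$ produces a family of multiplicity $2$ whose intersection graph is $G$, so already multiplicity-$2$ covers can require arbitrarily many colors (an intersection graph equal to a $5$-cycle cannot be $2$-colored). A large Lebesgue number does not remove this obstruction, and your shrinking procedure cannot repair it because it presupposes that the coloring has already been chosen.

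The correct way to pass from a cover to an $(n+1)$-colored $R$-disjoint decomposition is to index the pieces not by single members of the cover but by $(k+1)$-element subfamilies, i.e.\ by simplices of the nerve: the $k$-th color consists of the sets of points lying ``deeply'' in a given $(k+1)$-element subfamily, and distinct $k$-simplices give uniformly separated pieces. This is precisely the barycentric-subdivision argument the paper uses for $1)\Rightarrow 2)$ (points whose image under an $(\epsilon,R)$-continuous partition of unity lands in the closed star of the barycenter of a $k$-simplex of the second barycentric subdivision, with $\epsilon$ chosen so that such stars for distinct $k$-simplices are $\epsilon$-separated). So the cheapest repair of your cycle is to replace your direct $(3)\Rightarrow(2)$ by $(3)\Rightarrow(1)\Rightarrow(2)$: first produce the $(\epsilon,R)$-continuous partition of unity from the cover via \ref{ExistenceOfREpsilonPUs} (which you already do elsewhere), then apply the barycentric coloring. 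A minor further point, which you flag and which the paper's own argument shares, is that passing to subsets of strata preserves ``$\mathscr{P}$ uniformly'' only under a hereditarity assumption on $\mathscr{P}$ (a wedge of the subsets embeds in the wedge of the strata); this should be stated, but it is not the main defect of your argument.
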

\begin{proof}
1)$\implies$2). Notice there is $\epsilon > 0$ such that for any simplicial complex $K$ of dimension at most $n$ the second barycentric subdivision $K''$ of $K$ has the property that two closed stars of different barycenters $\sigma_1$, $\sigma_2$ of $K$ are $\epsilon$-disjoint if $\dim(\sigma_1)=\dim(\sigma_2)$. Therefore, given an $(R,\epsilon)$-continuous partition of unity $\phi$ on $X$
of dimension at most $n$, one defines $X_i$ as the set of points $x\in X$ whose image $\phi(x)$ lands
in a closed star of the barycenter of some $i$-simplex of $K$, where $K$ is the $n$-skeleton
of the full complex spanned by the set of indices of $\phi$.

2)$\implies$3). Decompose $X$ as a finite union $\bigcup\limits_{k=0}^n X_k$
and each $X_i$ is an $3R$-disjoint union of sets having $\mathscr{P}$ uniformly.
Consider the family of $R$-balls around all the sets that are listed in decompositions of $X_i$, $i\leq n$. Apply \ref{BallEnlargementProp}.

3)$\implies$1). Apply \ref{ExistenceOfREpsilonPUs}.
\end{proof}

\begin{Definition}\label{DisjointDecomposition}
Let $\mathscr{P}$ be a regular coarse property, $\{R_i\}_{i\ge 1}$ is a sequence of real numbers, and $m > 0$ is an integer. An $\infty$-pseudo-metric space $X$ has 
an \textbf{$(m,\{R_i\}_{i\ge 1},\mathscr{P})$-decomposition} if $X$ decomposes as the union of $m$-subsets $X_i$, $1\leq i\leq m$,
and each $X_i$ is a union of an $R_i$-disjoint family satisfying $\mathscr{P}$ uniformly.
\end{Definition}

\begin{Lemma}\label{BasicLemmaAboutRepsilonPDecomposition}
Let $\mathscr{P}$ be a regular coarse property, $\{R_i\}_{i\ge 1}$ is a sequence of real numbers, and $m > 0$ is an integer, and $\{X_s\}_{s\in S}$ is a family of $\infty$-pseudo-metric spaces.
$\bigvee\limits_{s\in S}(X_s,x_s)$ has an $(m,\{R_i\}_{i\ge 1},\mathscr{P})$-decomposition in the following two cases:\\
1. Each $X_s$ has an $(m,\{R_i\}_{i\ge 1},\mathscr{P})$-decomposition and $S$ is finite,\\
2. All $X_s$, $s\in S$, are subsets of an $\infty$-pseudo-metric space $X$ that has
an $(m,\{R_i\}_{i\ge 1},\mathscr{P})$-decomposition.
\end{Lemma}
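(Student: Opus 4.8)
The plan is to treat the two cases separately, since they rest on different permanence properties of regular coarse properties established earlier.

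\emph{Case 1 (finite $S$).} Write $S=\{1,\dots,k\}$ and for each $s$ let $X_s=\bigcup_{i=1}^m X_s^{(i)}$ be the given $(m,\{R_i\},\mathscr{P})$-decomposition, so that each $X_s^{(i)}$ is an $R_i$-disjoint union of a family $\mathcal{U}_s^{(i)}$ satisfying $\mathscr{P}$ uniformly. Inside $W:=\bigvee_{s\in S}(X_s,x_s)$ I would like to set $W_i:=\bigcup_{s\in S}X_s^{(i)}$ (interpreting each $X_s^{(i)}$ via the canonical isometric embedding of $X_s$ into $W$), after first arranging that the basepoint $x_s$ lies in $X_s^{(i)}$ for every $i$; this can be done because we may always throw the single point $x_s$ into one member of each family $\mathcal{U}_s^{(i)}$ without disturbing $R_i$-disjointness or the uniform $\mathscr{P}$ property (a single point is coarsely trivial, and adding it to a set that has $\mathscr{P}$ preserves $\mathscr{P}$ by heredity together with boundedness — more simply, enlarging one member of the family by one point changes each space by a bounded amount, which $\mathscr{P}$ tolerates). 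Then $W=\bigcup_{i=1}^m W_i$. Within a fixed $W_i$, the family of members of $\mathcal{U}_s^{(i)}$ over all $s$ is $R_i$-disjoint: two members coming from the same $s$ are $R_i$-disjoint inside $X_s$, and in $W$ distinct coordinates $s\neq s'$ are at mutual distance at least $d_s(\cdot,x_s)+d_{s'}(\cdot,x_{s'})$, which is finite and does not itself guarantee $R_i$-disjointness — here is the subtlety. To fix it, I would shrink: replace $\mathcal{U}_s^{(i)}$ by the family obtained by deleting from every member its intersection with $B(x_s,R_i)$, and absorb the removed (bounded) pieces, together with the wedge point's ball, into a separate bounded stratum adjoined to, say, $W_1$; the trimmed members over distinct $s$ then have representatives at distance $\geq R_i$ in $W$, and the trimming changes each member by a bounded amount so uniform $\mathscr{P}$ (taken over the whole enlarged family indexed by $\bigsqcup_s$) follows from Definition \ref{RegularCoarseProperty}(2) applied to the finite wedge of the $k$ spaces $\bigvee_{u\in \mathcal{U}_s^{(i)}}(\cdots)$, each of which has $\mathscr{P}$ by hypothesis. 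Invoking Proposition \ref{BallEnlargementProp} if a clean statement is preferred, one recovers the $R_i$-disjoint decomposition of $W_i$ with all pieces having $\mathscr{P}$ uniformly.

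\emph{Case 2 (all $X_s\subset X$).} Here let $X=\bigcup_{i=1}^m X^{(i)}$ be the given decomposition of the ambient space, with $X^{(i)}$ the $R_i$-disjoint union of a uniformly $\mathscr{P}$ family $\mathcal{V}^{(i)}$. Set $X_s^{(i)}:=X_s\cap X^{(i)}$; this decomposes each $X_s$ and each $X_s^{(i)}$ is $R_i$-disjoint (being a subfamily-of-intersections of an $R_i$-disjoint family) and has $\mathscr{P}$ uniformly because $\mathscr{P}$ is hereditary and the subspaces of members of $\mathcal{V}^{(i)}$ are again uniformly $\mathscr{P}$ (a subfamily of a uniformly-$\mathscr{P}$ family, with each member replaced by a subspace, is uniformly $\mathscr{P}$: pass to subspaces inside the wedge and use heredity). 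Now apply Case 1 to the family $\{X_s\}_{s\in S}$ with these induced decompositions — but $S$ need not be finite, so instead I argue directly: form $W=\bigvee_{s\in S}(X_s,x_s)$ and note that the natural map $f\colon W\to \bigcup_{s\in S}X_s\subset X$ that is the identity on each $X_s$ is, on each coordinate, distance preserving, while globally it is a coarse equivalence onto its image exactly when the $X_s$ are eventually mutually far apart; in general $W$ need not coarsely embed in $X$. The correct route is therefore to mimic Case 1 at the level of the ambient decomposition: define $W_i=\bigcup_{s\in S}(X_s\cap X^{(i)})\subset W$, trim balls around the $x_s$, and observe that the trimmed members over all $s\in S$ now form a family each of whose members sits inside some member of $\mathcal{V}^{(i)}$, so the whole family (indexed by $\bigsqcup_s \{$members of $\mathcal{V}^{(i)}$ meeting $X_s\}$) is a family of subspaces of the uniformly-$\mathscr{P}$ family $\mathcal{V}^{(i)}$, hence uniformly $\mathscr{P}$ by heredity; $R_i$-disjointness in $W$ again follows because same-member-of-$\mathcal{V}^{(i)}$ collisions are impossible after trimming (they would force distance $<R_i$ in $X$) and distinct coordinates are kept apart by the trimming.

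\emph{Main obstacle.} The one genuinely delicate point, common to both cases, is that in the wedge $\bigvee_{s}(X_s,x_s)$ two subsets coming from different coordinates are at finite but not necessarily large distance — they both accumulate at the base point — so the naive union of the per-$X_s$ decompositions is \emph{not} automatically $R_i$-disjoint across coordinates. The fix is to delete a fixed $R_i$-ball about the base point from every piece and collect all the deleted (uniformly bounded) debris, together with a base-point ball, into one extra bounded stratum, which can be merged into $X_1$; a uniformly bounded set has $\mathscr{P}$ trivially and merging it with an $R_i$-disjoint uniformly-$\mathscr{P}$ family preserves uniform $\mathscr{P}$ (Definition \ref{RegularCoarseProperty}(2) for the resulting finite wedge). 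Verifying that this trimming does not destroy the uniform-$\mathscr{P}$ status of the remaining family — which is where heredity of $\mathscr{P}$ and, if one wants ball-enlargements back, Proposition \ref{BallEnlargementProp} come in — is the heart of the argument; everything else is bookkeeping.
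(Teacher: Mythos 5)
Your diagnosis of the main obstacle (members coming from different wedge coordinates are not automatically $R_i$-disjoint, because they all crowd the wedge point) is exactly the paper's, but your fix has genuine gaps. First, the lemma only assumes $\mathscr{P}$ is regular: nothing in Definition \ref{RegularCoarseProperty} says that a bounded set has $\mathscr{P}$, that $\mathscr{P}$ is hereditary, or that $\mathscr{P}$ survives deletion of a bounded subset, yet your argument uses all three --- the debris stratum is declared to have $\mathscr{P}$ ``trivially'', the trimmed members are claimed to keep $\mathscr{P}$ because they differ from the originals ``by a bounded amount'' (removing a bounded subset need not even preserve coarse type in an $\infty$-pseudo-metric space), and your Case 2 explicitly invokes heredity, which is not a hypothesis here (the paper adds ``hereditary'' explicitly in the statements that need it). Second, even granting those claims, the decomposition you produce is not $R_1$-disjoint: the debris stratum adjoined to $W_1$ lies in the ball of radius $\max_i R_i$ about the wedge point, while a member of the first family trimmed at radius $R_1$ can contain points of the same coordinate at distance $R_1+\epsilon$ from $x_s$; such points are arbitrarily close to --- and, when some $R_j>R_1$, possibly equal to --- points you removed into the debris, so the debris piece is not $R_1$-separated from the trimmed members it must sit alongside.

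The paper's proof avoids both problems by never cutting members: set $R=\max_i R_i$, let $B$ be the $R$-ball about the wedge point, and for each $i$ replace all members (over all $s$) that meet $B$ by their single union, leaving every other member untouched. That union has $\mathscr{P}$ by Proposition \ref{BoundedUnionProp} (the members involved are uniformly $\mathscr{P}$ and become $r$-disjoint away from the wedge point), and whole untouched members, lying outside $B$, are automatically $R_i$-disjoint from one another across coordinates and from the lumped piece, with no appeal to heredity or to bounded sets having $\mathscr{P}$. To salvage your trimming scheme you would have to build in such a separation gap and derive the $\mathscr{P}$-status of every modified piece from regularity alone, which is precisely what absorbing whole members near the wedge point accomplishes.
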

\begin{proof}
Assume each $X_s$ decomposes as the union of $m$-subsets $X_s^i$, $i\leq m$,
and each $X_s^i$ is a union of an $R_i$-disjoint family satisfying $\mathscr{P}$ uniformly. In Case 2, those decompositions are induced from a decomposition of $X$.

Let $R=\max(R_i | i\leq m)$. Let $B$ be the ball around the basepoint of $X$ of radius $R$.
Given $i\leq m$, add all elements of decompositions of $X_s^i$ intersecting $B$.
By \ref{BoundedUnionProp}, that union satisfies $\mathscr{P}$.
$X^i$ consists of that set plus all the elements of decompositions of $X_s^i$ that do not intersect $B$.
\end{proof}

\begin{Corollary}
Given a regular coarse property $\mathscr{P}$, and given $n\ge 0$, the coarse property
of having asymptotic dimension with respect to $\mathscr{P}$ at most $n$ is a regular coarse property.
\end{Corollary}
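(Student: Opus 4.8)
The plan is to verify that the coarse property $\mathscr{Q}_n$ ``having asymptotic dimension with respect to $\mathscr{P}$ at most $n$'' satisfies the two clauses of Definition \ref{RegularCoarseProperty}, using the characterization in Theorem \ref{CharAsdimWRTP} and the stability results for $(m,\{R_i\},\mathscr{P})$-decompositions. The key observation is that $\mathscr{Q}_n$ is, by Theorem \ref{CharAsdimWRTP}(2), exactly the statement that for every $R>0$ the space admits an $(n+1,\{R,\ldots,R\},\mathscr{P})$-decomposition; so the corollary should follow by a direct reduction to Lemma \ref{BasicLemmaAboutRepsilonPDecomposition} together with the permanence properties already established for wedges.

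First I would record that $\mathscr{Q}_n$ is preserved under coarse equivalence: this is immediate from clause (3) of Theorem \ref{CharAsdimWRTP} (existence of covers of large Lebesgue number, bounded multiplicity, and $\mathscr{P}$ uniformly), since a coarse equivalence transports such covers, and $\mathscr{P}$ uniformly is a coarse notion. Hence $\mathscr{Q}_n$ is a genuine coarse property, which is what one must know before invoking Definition \ref{RegularCoarseProperty}. Next, for clause (2) of regularity, suppose $\{X_s\}_{s\in S}$ is a finite family, each $X_s$ having asymptotic dimension at most $n$ with respect to $\mathscr{P}$. Fix $R>0$. By Theorem \ref{CharAsdimWRTP}(2) each $X_s$ has an $(n+1,\{R,\ldots,R\},\mathscr{P})$-decomposition, so by case 1 of Lemma \ref{BasicLemmaAboutRepsilonPDecomposition} the wedge $\bigvee_{s\in S}(X_s,x_s)$ has an $(n+1,\{R,\ldots,R\},\mathscr{P})$-decomposition. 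Since $R$ was arbitrary, Theorem \ref{CharAsdimWRTP}(2)$\Rightarrow$(1) gives that $\bigvee_{s\in S}(X_s,x_s)$ has asymptotic dimension at most $n$ with respect to $\mathscr{P}$.

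For clause (1) of regularity, suppose $X$ has asymptotic dimension at most $n$ with respect to $\mathscr{P}$ and $X_s$ is an isometric copy of $X$ for $s\in S$, with $S$ possibly infinite. Fix $R>0$ and let $Y=\bigvee_{s\in S}(X_s,x_s)$. Each $X_s\subset Y$ isometrically, and all the $X_s$ carry a common $(n+1,\{R,\ldots,R\},\mathscr{P})$-decomposition pulled back from $X$; I would observe that this is precisely the situation of case 2 of Lemma \ref{BasicLemmaAboutRepsilonPDecomposition} applied with $X$ replaced by $Y$ — more carefully, one first decomposes $Y$ itself. The cleanest route: note $Y$ is the union of the subsets $X_s$ with common basepoint, apply the decomposition of $X$ coordinatewise to build global sets $Y^i=\bigcup_{s}X_s^i$, absorb into each $Y^i$ the (finitely many, by local finiteness near the basepoint) pieces meeting the basepoint ball $B(y_0,R)$ using Proposition \ref{BoundedUnionProp}, and check the leftover pieces form an $R$-disjoint family with $\mathscr{P}$ uniformly because they live in disjoint wedge factors outside the basepoint ball. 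Again Theorem \ref{CharAsdimWRTP} converts this back to the asymptotic-dimension statement.

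The main obstacle I anticipate is the infinite-wedge case: one must be sure that the elements of the various decompositions of the $X_s$ that meet the basepoint ball $B(y_0,R)$ are finite in number (so that Proposition \ref{BoundedUnionProp}, which only handles bounded unions, applies), and that the remaining elements, sitting in distinct wedge summands and outside the ball of radius $R$ about $y_0$, are genuinely $R$-disjoint in the wedge metric — this uses the defining formula $d(\{a_s\},\{b_s\})=\sum_s d_s(a_s,b_s)$, which forces two points in different summands, both at distance $\geq$ something from the basepoint, to be far apart. Once this bookkeeping is in place (it is exactly what Lemma \ref{BasicLemmaAboutRepsilonPDecomposition} was designed to encapsulate), the corollary is formal.
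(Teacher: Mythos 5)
Your proposal is correct and follows essentially the same route as the paper: translate ``asymptotic dimension at most $n$ with respect to $\mathscr{P}$'' into the existence of $(n+1,\{R\}_{i\ge 1},\mathscr{P})$-decompositions for all $R$ via Theorem \ref{CharAsdimWRTP}, and then invoke Lemma \ref{BasicLemmaAboutRepsilonPDecomposition} for wedges. The extra bookkeeping you carry out for the infinite wedge (absorbing pieces near the basepoint via Proposition \ref{BoundedUnionProp} and checking $R$-disjointness of the rest) is exactly the content of that lemma's proof, so it adds detail but no new idea.
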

\begin{proof}
Having asymptotic dimension with respect to $\mathscr{P}$ at most $n$ is the same
as having an $(n+1,\{R\}_{i\ge 1},\mathscr{P})$-decomposition for any constant sequence
$\{R\}_{i\ge 1}$ (see \ref{CharAsdimWRTP}).
Apply \ref{BasicLemmaAboutRepsilonPDecomposition}.
\end{proof}

\section{Decomposition trees}
In this section we introduce a concept that formalizes the way finite decomposition complexity was introduced in \cite{GTY1}  (see also \cite{GTY2}) and also applies to the way straight finite decomposition complexity was introduced in \cite{DZ}.

\begin{Definition}\label{decomposition tree}
Suppose $X$ is an $\infty$-pseudo-metric space and $\{(R_i,n_i)\}_{i\ge 1}$ is an infinite
sequence of pairs on natural numbers.\\
An \textbf{$\{(R_i,n_i)\}_{i\ge 1}$-decomposition tree} $\mathcal{T}$ on a set $X$ is a rooted tree whose vertices are subsets of $X$ satisfying the following conditions:\\
1. The subset at the root of $\mathcal{T}$ is the whole $X$,\\
2. If there is an edge from $A$ at depth $n$ to $B$ at depth $n+1$, then $B\subset A$, \\
3. Each non-leaf $A$ at even depth $i$ has at most $n_{i+1}$ children and is the union of its children,\\
4. For each non-leaf $A$ at odd depth $i$, the family of its children is $R_i$-disjoint and covers $A$.
\end{Definition}

\begin{Theorem}\label{ConvertDecompositionTreeToPUTree}
Suppose $\mathscr{P}$ is a regular coarse property and  $X$ is an $\infty$-pseudo-metric space.
If there is a sequence $\{(n_i)\}_{i\ge 1}$ of natural numbers such that
for each infinite sequence $\{(R_i)\}_{i\ge 1}$ of real numbers
$X$ has an $\{(R_i,n_i)\}_{i\ge 1}$-decomposition tree of finite height whose leaves satisfy $\mathscr{P}$
uniformly, then for each infinite sequence of pairs of positive reals $\{(\epsilon_i,R_i)\}_{i\ge 1}$,
there is a tree of partitions of unity on $X$ whose partition of unity has strata satisfying $\mathscr{P}$ uniformly and  each vertex of $\mathcal{T}$ at depth $i$ is $(\epsilon_i,R_i)$-continuous.
\end{Theorem}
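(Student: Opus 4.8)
The plan is to convert the given decomposition tree directly, level by level, into a tree of partitions of unity, using the constructions from Sections 2 and 4. The key point is that an odd-depth vertex in a decomposition tree carries an $R_i$-disjoint cover of its children, while an even-depth vertex carries a finite cover of multiplicity bounded by $n_{i+1}$; both of these can be turned into $(\epsilon,R)$-continuous partitions of unity with control on Lebesgue numbers, by Observation \ref{RDisjointObservation}/Corollary \ref{ExistenceOfREpsilonPUs} in the first case and by a direct multiplicity estimate in the second. So the strategy is: fix the target sequence $\{(\epsilon_i,R_i)\}_{i\ge 1}$; choose an auxiliary sequence $\{(R_i')\}_{i\ge 1}$ of scales large enough so that the decomposition tree hypothesized to exist at scales $\{R_i'\}$ has, at each level, enough room to produce partitions of unity that are $(\epsilon_i,R_i)$-continuous; then replace each vertex $A$ of the decomposition tree by the partition of unity $\phi_{R}^{\VV}$ (or its normalized-bump analogue) associated to the cover of $A$ given by its children, and check that the resulting rooted tree satisfies all five conditions of Definition \ref{tree of partitions of unity}.

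First I would pin down the scale bookkeeping. At an odd depth $i$, a vertex $A$ has children forming an $R_i'$-disjoint family covering $A$; enlarging each child by an $R_i$-ball (as in Observation \ref{RDisjointObservation}) gives a cover of $A$ of multiplicity $\le 2$... actually multiplicity $1$ after shrinking appropriately, so by Corollary \ref{ExistenceOfREpsilonPUs} with $m$ small and Lebesgue number $\ge 4mR_i/\epsilon_i$, taking $R_i' \ge 4mR_i/\epsilon_i + 2R_i$ or so makes $\phi_{R_i}^{\VV}$ an $(\epsilon_i,R_i)$-continuous partition of unity on $A$ whose strata refine the children. At an even depth $i$, a vertex $A$ has at most $n_{i+1}$ children whose union is $A$; here the multiplicity is at most $n_{i+1}$, so again Corollary \ref{ExistenceOfREpsilonPUs} applies after passing to $R_i'$-balls with $R_i'$ chosen so that $4n_{i+1}R_i/\epsilon_i$ is at most the available Lebesgue number. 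The crucial observation is that since the hypothesis allows us to choose the sequence $\{R_i'\}$ freely (the decomposition tree of finite height exists for every such choice), we may define $R_i'$ recursively in terms of $\epsilon_i, R_i$ and the already-fixed $n_i$, and thereby guarantee the continuity bounds at every level simultaneously.

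Next, I would verify the structural conditions. Condition 1 of Definition \ref{tree of partitions of unity} is immediate since the root of the decomposition tree is $X$ and its associated partition of unity is full. Condition 2 ($\phi \le \psi$) holds because the strata of the partition of unity at a child vertex lie inside that child, which is a subset of the parent, and in fact inside a single stratum of the parent's partition of unity — this requires arranging the bookkeeping so that a child vertex $B$ of $A$ is itself contained in one stratum $(\phi^{\VV})_s^{-1}(0,1]$ of the partition of unity on $A$; since the stratum associated to the index of a child of $A$ in a $\phi_R^{\VV}$-type partition contains that child, this is exactly how the tree is wired. Condition 3 (indexing by children) is built into the construction. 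Condition 4 (leaves are trivial partitions of unity) holds provided the leaves of the decomposition tree are reached at even depth and each is replaced by its trivial partition — one may always pad the decomposition tree with trivial steps to make this so, and the leaves of the decomposition tree satisfy $\mathscr P$ uniformly by hypothesis, which is what gives the required property of the strata of $\phi(\mathcal T)$. Condition 5 (probability-tree restriction) holds because at each vertex the partition of unity sums to $1$ and the products telescope, which is precisely Definition \ref{probability tree}.

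The main obstacle I anticipate is the interface between the two kinds of levels: an $R_i'$-disjoint family covering $A$ at an odd level versus an $n_{i+1}$-bounded finite cover at an even level, and making sure that when I normalize bumps to get $\phi_{R_i}^{\VV}$ the strata still sit inside individual parent strata (so that $\phi \le \psi$ holds on the nose rather than only up to a coarsening). Handling this cleanly likely requires either inserting an intermediate ``identity'' partition of unity at some levels, or being careful that the index set of the child partition of unity is a subset of the index set at the parent — i.e.\ that the decomposition tree's child-of relation is compatible with the stratum-inclusion relation $\le$. Once that combinatorial compatibility is set up, the continuity estimates are routine applications of Corollary \ref{ExistenceOfREpsilonPUs}, and the fact that $\phi(\mathcal T)$ has strata with $\mathscr P$ uniformly is immediate from the hypothesis on the leaves of the decomposition tree together with $\mathscr P$ being regular (so that the uniformity passes through the wedge-based definition). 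The finite height of the decomposition tree ensures the tree of partitions of unity also has finite height, so no convergence issue arises in applying Theorem \ref{FundThmOnRootsOfPUs} afterward.
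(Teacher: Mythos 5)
Your overall route is the paper's: exploit the freedom to feed a suitably inflated sequence of scales into the hypothesis, fatten the vertices of the resulting decomposition tree by balls so that at each level the children form a cover with controlled multiplicity and large Lebesgue number, convert each vertex into a partition of unity via Corollary \ref{ExistenceOfREpsilonPUs}, and keep the $\mathscr{P}$-uniformity of the leaves under the fattening (Proposition \ref{BallEnlargementProp} / regularity). So the skeleton is right.

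The genuine gap is the step you explicitly leave open at the end, and it is the real content of the argument: condition 2 of Definition \ref{tree of partitions of unity}. If at depth $k$ you build $\phi^{\mathcal V}_{R_k}$ from the children of a vertex $A$ enlarged by $r_k=4n_kR_k/\epsilon_k$, its strata spill into the $r_k$-collars of the children, while the grandchildren only cover the \emph{original} child; hence the partition of unity you construct at the next level is not defined on the whole parent stratum, and $\phi\le\psi$ fails. Fixing this forces the enlargements to be \emph{cumulative}: a vertex at depth $k$ must absorb all the radii $r_1,\dots,r_k$ used above it (with each ball taken inside the already-enlarged parent, so that containment is preserved), and at the odd (disjoint) levels the disjointness/multiplicity bound must survive this total fattening. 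That is exactly why the paper first requests the decomposition tree at the summed scales $S_k=\sum_{i=1}^k 8n_iR_i/\epsilon_i$ and then enlarges level by level relative to the parents; your candidate choice $R_i'\ge 4mR_i/\epsilon_i+2R_i$ depends only on the data of level $i$, so with it both the covering requirement and the disjointness bound can already fail at depth $2$. Your remarks that $R_i'$ may be chosen ``recursively'' and that the interface ``likely requires either \dots or \dots'' show you saw the difficulty, but the proof needs this cumulative-scale bookkeeping carried out (after which restricting each child partition of unity to the parent stratum yields conditions 1--5, and the rest of your argument, including the use of Proposition \ref{BallEnlargementProp} for the leaves, goes through).
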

\begin{proof}
Given an infinite sequence of pairs of positive reals $\{(\epsilon_i,R_i)\}_{i\ge 1}$,
define $S_k=\sum\limits_{i=1}^k \frac{8n_i\cdot R_i}{\epsilon_i}$ and
pick an $\{(S_i,n_i)\}_{i\ge 1}$-decomposition tree of finite height whose leaves satisfy $\mathscr{P}$ uniformly.
Replace it by an $\{(\frac{4n_i\cdot R_i}{\epsilon_i},n_i)\}_{i\ge 1}$-decomposition tree $\mathscr{T}$ of the same height whose leaves satisfy $\mathscr{P}$ uniformly by induction as follows:
In Step 1, replace each vertex $A$ by $B(A,\frac{4n_1\cdot R_1}{\epsilon_1})$.
In Step 2, replace each new vertex $C$ at depth $2$ or higher by $B(C,\frac{4n_2\cdot R_2}{\epsilon_2})$
with the proviso that the balls are with respect to vertices at height $1$. Continue until one reaches leaves of the tree.

Construct a tree of partitions of unity using \ref{ExistenceOfREpsilonPUs} and notice it has required properties.
\end{proof}

\section{Union permanence}

Union permanence for coarse invariants as defined by Guentner \cite{GuentnerPermanence} needs to be modified since we consider more general spaces than those in \cite{GuentnerPermanence}.

\begin{Definition}
Suppose $\mathscr{P}$ is a regular coarse property. $\mathscr{P}$ has the \textbf{$\infty$-union permanence} if $\bigcup\limits_{s\in S} X_s$ has $\mathscr{P}$ whenever the family
$\{X_s\}_{s\in S} $ has $\mathscr{P}$ uniformly and the distance between points of $X_t$ and $X_s$
is infinity for all $s\ne t\in S$.
\end{Definition}

\begin{Observation}
The property of being coarsely equivalent to one-point space does not have $\infty$-union permanence. The same is true for the property of being coarsely embeddable in a Hilbert space.
\end{Observation}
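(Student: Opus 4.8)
The plan is to produce, in both cases, a single family $\{X_s\}_{s\in S}$ of $\infty$-pseudo-metric spaces that has the property uniformly and whose members are pairwise at infinite distance, yet whose union does not have the property. The same family will serve for both assertions. I would take $S=\{1,2\}$ (any $S$ with at least two elements works), let $X_1=\{p_1\}$ and $X_2=\{p_2\}$ be one-point spaces, and declare $d(p_1,p_2)=\infty$, so that $\bigcup_{s\in S}X_s=\{p_1,p_2\}$ carries the $\infty$-pseudo-metric with $d(p_1,p_2)=\infty$.

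First I would observe that both properties at issue are indeed regular, so that $\infty$-union permanence is a meaningful notion for them: a wedge of isometric copies of a bounded space is bounded (every wedge point is within $\diam$ of the basepoint), and a wedge of copies of a space coarsely embeddable in a Hilbert space embeds again, by placing the images of the arms in mutually orthogonal summands with the basepoints sent to $0$; the finite-family condition is immediate in both cases. Next, since each $X_s$ is a single point, it is trivially coarsely equivalent to a one-point space and trivially coarsely embeddable in a Hilbert space; moreover $\bigvee_{s\in\{1,2\}}(X_s,p_s)$ is itself a single point, because in the wedge every coordinate is forced to equal a basepoint, so it too has both properties. Hence $\{X_1,X_2\}$ has each of the two properties uniformly, while the distance between the point of $X_1$ and the point of $X_2$ is $\infty$, which is exactly the hypothesis needed in the definition of $\infty$-union permanence.

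It then remains to verify that $\bigcup_{s}X_s=\{p_1,p_2\}$ with $d(p_1,p_2)=\infty$ has neither property. It is unbounded, hence not coarsely equivalent to a one-point space, since a coarse equivalence to a point forces a space to be bounded. And it is not coarsely embeddable in any metric space whose distances are all finite, in particular in no Hilbert space: if $f$ were a coarse equivalence onto $f(\{p_1,p_2\})$ inside such a space, with coarse inverse $g$, then $r:=d(f(p_1),f(p_2))<\infty$, and bornologousness of $g$ applied to $r$ would yield a finite $S$ with $d\bigl(g(f(p_1)),g(f(p_2))\bigr)\le S$; but $g\circ f$ is at bounded distance from the identity, so $g(f(p_i))$ is at finite distance from $p_i$, and two applications of the triangle inequality (valid since all the distances involved would be finite) would force $d(p_1,p_2)<\infty$, a contradiction.

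The only delicate point is this last one: one must articulate clearly that an $\infty$-pseudo-metric space containing two points at infinite distance cannot be coarsely embedded into a space all of whose distances are finite. Everything else is a direct unwinding of the definitions, and the argument applies verbatim to any family of one-point spaces that are pairwise at infinite distance.
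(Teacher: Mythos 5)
Your construction is correct: the paper states this Observation without proof, and your two-point example (two singletons at infinite distance) is exactly the kind of trivial witness it has in mind --- the wedge is a single point, so the family has both properties uniformly, while the union contains two points at infinite distance and hence is neither bounded (so not coarsely equivalent to a point) nor coarsely embeddable into any space with only finite distances, such as a Hilbert space. The only step that genuinely needed an argument, namely that a coarse embedding cannot send an infinite distance to a finite one (via bornologousness of the coarse inverse and closeness of $g\circ f$ to the identity, with the triangle inequality applied only to finite distances), is handled soundly in your write-up.
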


\begin{Definition}
Suppose $\mathscr{P}$ is a regular coarse property. $\mathscr{P}$ has the \textbf{finite union permanence} if $X\cup Y$ has $\mathscr{P}$ whenever $X, Y$ have $\mathscr{P}$
and $X\cap Y\ne\emptyset$.
\end{Definition}

\begin{Definition}\label{UnionPermanence}
Suppose $\mathscr{P}$ is a regular coarse property. $\mathscr{P}$ has the \textbf{union permanence} if $X$ has $\mathscr{P}$ whenever for each
$R, \epsilon > 0$ there is an $(R,\epsilon)$-continuous partition of unity $\phi$ on $X$ whose strata have $\mathscr{P}$
uniformly and the range of $\phi$ is contained in a hedgehog, i.e. a rooted tree of height $1$.
\end{Definition}

The next result shows that our definition of union permanence \ref{UnionPermanence} corresponds to the one given by Guentner \cite{GuentnerPermanence} which amounts to 2) below.

\begin{Proposition}
Suppose $\mathscr{P}$ is a regular coarse property. $\mathscr{P}$ has the union permanence
if and only if the following conditions are equivalent for every $\infty$-pseudo-metric space $X$:\\
1. $X$ has property $\mathscr{P}$,\\
2. For each
$R > 0$ there is a subspace $Y$ of $X$ having $\mathscr{P}$ such that
$X\setminus Y$ can be represented as an $R$-disjoint union of sets having $\mathscr{P}$ uniformly.
\end{Proposition}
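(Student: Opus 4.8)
The plan is to recognize condition (2) as the combinatorial shadow of the ``hedgehog'' partitions of unity in Definition \ref{UnionPermanence}, passing between the two pictures by means of Corollary \ref{ExistenceOfREpsilonPUs} and Proposition \ref{BallEnlargementProp}. For the forward direction of the biconditional one must derive (1)$\Leftrightarrow$(2) from union permanence; here (1)$\Rightarrow$(2) is trivial (take $Y=X$, so $X\setminus Y=\emptyset$ is the $R$-disjoint union of the empty family, which satisfies $\mathscr{P}$ uniformly vacuously), so only (2)$\Rightarrow$(1) has content. For the backward direction one assumes (1)$\Leftrightarrow$(2) and must establish union permanence, for which it suffices to verify that every space with the hedgehog property of Definition \ref{UnionPermanence} satisfies condition (2) and then to apply (2)$\Rightarrow$(1).

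For (2)$\Rightarrow$(1) assuming union permanence: fix $\epsilon,R_0>0$; the goal is to produce an $(\epsilon,R_0)$-continuous partition of unity on $X$ with strata satisfying $\mathscr{P}$ uniformly and range in a hedgehog, after which Definition \ref{UnionPermanence} gives $X\in\mathscr{P}$. Apply (2) with disjointness parameter $D=24R_0/\epsilon$ to write $X=Y\cup\bigcup_s Z_s$, where $Y$ has $\mathscr{P}$ and $\{Z_s\}_s$ is $D$-disjoint with $\mathscr{P}$ uniformly. Set $V_0=B(Y,D/3)$ and $V_s=B(Z_s,D/3)$ for $s\ne0$. Then $\{V_s\}_{s\ne0}$ is $(D/3)$-disjoint, hence pairwise disjoint; the cover $\mathcal V=\{V_0\}\cup\{V_s\}_{s\ne0}$ has multiplicity at most $2$ and Lebesgue number at least $D/3=8R_0/\epsilon$; and $\mathcal V$ has $\mathscr{P}$ uniformly, because $\{V_s\}_{s\ne0}$ does by Proposition \ref{BallEnlargementProp}, the one-element family $\{V_0\}$ does trivially, and adjoining one more $\mathscr{P}$-space to a $\mathscr{P}$-uniform family keeps it $\mathscr{P}$-uniform by regularity. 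By Corollary \ref{ExistenceOfREpsilonPUs}, $\phi_{R_0}^{\mathcal V}$ is $(\epsilon,R_0)$-continuous; its strata are contained in the $V_s$, so they satisfy $\mathscr{P}$ uniformly; and since the sets $V_s$ with $s\ne0$ are mutually disjoint, for each $x$ the point $\phi_{R_0}^{\mathcal V}(x)$ is supported on $\{0,s\}$ for a single $s$, hence lies on the segment joining the vertices $0$ and $s$, so the range of $\phi_{R_0}^{\mathcal V}$ lies in the hedgehog centered at $0$. This is exactly the input required by union permanence.

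For the reverse direction, assume (1)$\Leftrightarrow$(2) for every $\infty$-pseudo-metric space, let $X$ admit, for all $\epsilon,R>0$, an $(\epsilon,R)$-continuous partition of unity with strata satisfying $\mathscr{P}$ uniformly and range in a hedgehog, and fix $R_0>0$. Pick such a $\phi=\{\phi_0\}\cup\{\phi_s\}_s$ that is $(\tfrac{1}{4},R_0)$-continuous, with $0$ the center of the hedgehog; then the strata $\Sigma_s=\phi_s^{-1}(0,1]$ for $s\ne0$ are pairwise disjoint. Put $Y=\Sigma_0=\phi_0^{-1}(0,1]$, which has $\mathscr{P}$ as a member of the $\mathscr{P}$-uniform family of strata. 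If $\phi_0(x)=0$ then exactly one $\phi_s(x)$ is nonzero and equals $1$, so $X\setminus Y=\bigsqcup_s W_s$ with $W_s=\phi_s^{-1}(1)\subseteq\Sigma_s$; consequently $\{W_s\}$ has $\mathscr{P}$ uniformly. If $x\in W_s$, $y\in W_t$ with $s\ne t$ and $d(x,y)\le R_0$, the continuity estimate forces $|\phi_s(x)-\phi_s(y)|\le\tfrac{1}{4}$, contradicting $\phi_s(x)=1$ and $\phi_s(y)=0$; thus $\{W_s\}$ is $R_0$-disjoint and condition (2) holds, which by hypothesis yields $X\in\mathscr{P}$.

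The arithmetic of the constants and the observation that ``range in a hedgehog'' is the same as ``the non-central strata are pairwise disjoint'' are routine; the point to watch is the $\mathscr{P}$-uniformity bookkeeping across the two passages. Concretely, one must check that the strata of $\phi_{R_0}^{\mathcal V}$ — which are only subsets of the $V_s$, and genuinely proper subsets when some index $i_{R_0}(\cdot,V_s)$ equals $\infty$ — still form a $\mathscr{P}$-uniform family, and that a single member of a $\mathscr{P}$-uniform family inherits $\mathscr{P}$. Both rely on $\mathscr{P}$ being hereditary, exactly as is already used tacitly in Theorems \ref{CharAsdimWRTP} and \ref{ConvertDecompositionTreeToPUTree}; so one should either add that hypothesis or split off the (bounded) coarse components on which these indices are infinite, using that bounded spaces satisfy $\mathscr{P}$.
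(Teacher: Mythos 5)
Your proof is correct and follows essentially the same route as the paper: for (2)$\Rightarrow$(1) you enlarge the pieces to a cover of multiplicity $2$ whose nerve sits in a hedgehog and invoke \ref{ExistenceOfREpsilonPUs} (the paper uses the same construction with radius $12R/\epsilon$ around $24R/\epsilon$-disjoint sets), and for the converse you take $Y$ to be the root stratum and the point-inverses of the non-root vertices as the $R$-disjoint family, exactly as in the paper (with $\epsilon=1/2$ there instead of $1/4$). The hereditarity bookkeeping you flag (strata being only subsets of the $V_s$, and a single wedge summand inheriting $\mathscr{P}$) is a genuine subtlety, but it is equally implicit in the paper's own argument, so it does not distinguish your approach from theirs.
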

\begin{proof}
Suppose 2) is satisfied and $R, \epsilon > 0$. Choose a subspace $Y$ of $X$ having $\mathscr{P}$ such that
$X\setminus Y$ can be represented as an $\frac{24R}{\epsilon}$-disjoint union of sets $\{Y_s\}_{s\in S}$ having $\mathscr{P}$ uniformly. Consider the cover of $X$ equal to $\{B(Y,\frac{12R}{\epsilon})\}\cup \{B(Y_s,\frac{12R}{\epsilon})\}_{s\in S}$. The nerve of that cover is contained in a hedgehog
and the partition of unity constructed in \ref{ExistenceOfREpsilonPUs} is $(R,\epsilon)$-continuous.

Conversely, suppose that for each
$R, \epsilon > 0$ there is an $(R,\epsilon)$-continuous partition of unity $\phi$ on $X$ whose strata have $\mathscr{P}$
uniformly and the range of $\phi$ is a hedgehog. It suffices to consider $\epsilon=0.5$.
Let $Y$ be the stratum of the root of the range of $\phi$. $X\setminus Y$ is the union
of point-inverses of the vertices of the range of $\phi$ and that family is $R$-disjoint.
\end{proof}

\begin{Corollary}
Suppose $\mathscr{P}$ is a regular coarse property.
If $\mathscr{P}$ has the union permanence, then it has the $\infty$-permanence and the finite union permanence.
\end{Corollary}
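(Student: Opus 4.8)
The plan is to deduce both permanence properties directly from union permanence (Definition \ref{UnionPermanence}): in each case it suffices to exhibit, for every $R,\epsilon>0$, an $(\epsilon,R)$-continuous partition of unity on the space in question whose strata satisfy $\mathscr{P}$ uniformly and whose range lies in a hedgehog.

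For the $\infty$-union permanence I would argue as follows. Suppose $\{X_s\}_{s\in S}$ satisfies $\mathscr{P}$ uniformly and $d(x,y)=\infty$ whenever $x\in X_s$, $y\in X_t$ and $s\ne t$, and put $X=\bigcup_{s\in S}X_s$. Then the sets $X_s$ are pairwise disjoint and cover $X$, so the characteristic functions $\phi_s=\chi_{X_s}$ form a partition of unity $\phi$ on $X$ whose strata are exactly the members of $\{X_s\}_{s\in S}$. These satisfy $\mathscr{P}$ uniformly by hypothesis; the range of $\phi$ is the set of index vertices $\{e_s\}_{s\in S}$, which is contained in the hedgehog obtained by coning that set off (equivalently, the nerve of the cover $\{X_s\}$ has no edges); and since any two points at finite distance lie in a common $X_s$, $\phi$ is $(\epsilon,R)$-continuous for every $\epsilon,R>0$. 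Union permanence then gives that $X$ has $\mathscr{P}$.

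For the finite union permanence I would not use the naive cover $\{X,Y\}$, whose associated partition of unity is not continuous, but a smoothed version. Suppose $X,Y$ have $\mathscr{P}$, $X\cap Y\ne\emptyset$, and put $Z=X\cup Y$. Given $R,\epsilon>0$, set $r=2R/\epsilon$ and let $h_X(z)=\max(0,\,r-d(z,X))$ and $h_Y(z)=\max(0,\,r-d(z,Y))$. Whenever $d(z,z')<\infty$ one has $|h_X(z)-h_X(z')|\le d(z,z')$, and $h_X(z)+h_Y(z)\ge r$ for every $z\in Z$ since each point of $Z$ lies in $X$ or in $Y$; hence $\phi_X=h_X/(h_X+h_Y)$ and $\phi_Y=h_Y/(h_X+h_Y)$ are well defined and form a partition of unity $\phi$ on $Z$, and a routine estimate yields $|\phi(z)-\phi(z')|\le 2\,d(z,z')/r$, so $\phi$ is $(\epsilon,R)$-continuous. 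Its range lies in the single edge spanned by its two index vertices, which is a hedgehog, and its two strata are the $r$-neighbourhoods $\{z\in Z:d(z,X)<r\}$ and $\{z\in Z:d(z,Y)<r\}$. Since $X$ is contained in the first of these and every point of it lies within distance less than $r$ of $X$, the inclusion of $X$ into it is a coarse equivalence, so it has $\mathscr{P}$; similarly the second has $\mathscr{P}$. Being a two-element family of spaces each satisfying $\mathscr{P}$, these strata satisfy $\mathscr{P}$ uniformly (by the observation that a finite family of spaces with $\mathscr{P}$ has $\mathscr{P}$ uniformly), and union permanence gives that $Z$ has $\mathscr{P}$.

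The main obstacle is the finite case, and more precisely getting the strata to satisfy $\mathscr{P}$ uniformly while keeping continuity. Enlarging $\{X,Y\}$ to $\{B(X,r),B(Y,r)\}$ and feeding it into Corollary \ref{ExistenceOfREpsilonPUs} does give an $(\epsilon,R)$-continuous partition of unity, but the strata of $\phi_R^{\VV}$ can be irregular subsets of those balls, which is harmless only if $\mathscr{P}$ is hereditary --- an assumption not available here, since the Corollary is stated for regular $\mathscr{P}$. The value of the explicit bump-function construction is that its strata are exactly the open $r$-neighbourhoods of $X$ and of $Y$, which are coarsely equivalent to $X$ and $Y$; because $\mathscr{P}$ is a coarse property this suffices, and regularity enters only through the finite-family observation above.
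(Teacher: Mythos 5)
Your argument is correct. For the $\infty$-union permanence it is essentially the paper's own proof: the characteristic functions of the sets $X_s$ form a partition of unity that is $(\epsilon,R)$-continuous for all $R,\epsilon>0$ because finite distances only occur within a single $X_s$, its strata are the $X_s$ themselves, and its range (a $0$-dimensional complex) sits inside a hedgehog. For the finite union permanence you take a genuinely different route. The paper deduces that case from the Guentner-style reformulation of union permanence (condition 2 of the preceding Proposition), taking $Y$ as the distinguished subspace and $\{X\setminus Y\}$ as the $R$-disjoint family; this is very terse and tacitly treats $X\setminus Y$ as having $\mathscr{P}$, which is a heredity-type step not justified for a general regular $\mathscr{P}$. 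You instead build an explicit two-element partition of unity from the bump functions $\max(0,r-d(\cdot,X))$, $\max(0,r-d(\cdot,Y))$ with $r=2R/\epsilon$; the estimate $|\phi(z)-\phi(z')|\le 2d(z,z')/r$ is correct, the range is a single edge (a hedgehog), and the strata are exactly the $r$-neighbourhoods of $X$ and $Y$ in $X\cup Y$, which are coarsely equivalent to $X$ and $Y$ and hence have $\mathscr{P}$; by the finite-family observation they have $\mathscr{P}$ uniformly, and Definition \ref{UnionPermanence} applies. What your construction buys is precisely what you point out: no heredity is needed (and you could shorten the uniformity step by applying \ref{BallEnlargementProp} directly to the two-element family $\{X,Y\}$ rather than arguing stratum by stratum), and in fact your proof never uses $X\cap Y\ne\emptyset$, so it yields the slightly stronger statement that union permanence implies closure under arbitrary two-element unions inside an ambient space; what the paper's route buys, when $\mathscr{P}$ happens to be hereditary, is brevity, since no smoothing or continuity estimate is needed at all.
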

\begin{proof}
Suppose the family
$\{X_s\}_{s\in S} $ has $\mathscr{P}$ uniformly and the distance between points of $X_t$ and $X_s$
is infinity for all $s\ne t\in S$. In that case the partition of unity consisting of characteristic functions of sets $X_s$ is $(\epsilon,R)$-continuous for all $R, \epsilon > 0$ and its range is a $0$-dimensional simplicial complex. Hence, $\bigcup\limits_{s\in S} X_s$ has $\mathscr{P}$.

Suppose $X, Y$ have $\mathscr{P}$ and $X\cap Y\ne\emptyset$. The family $\{X\setminus Y\}$ is trivially $R$-disjoint.
Hence, if $X\cup Y$ has $\mathscr{P}$.
\end{proof}

\section{Probabilistic decomposition complexity}
In this section we introduce a coarse property that generalizes exactness and allows for easy 
proofs that finite decomposition complexity (or straight finite decomposition complexity) implies Property A of G.Yu.

\begin{Definition}\label{Probabilistic decomposition complexity}
Suppose $\mathscr{P}$ is a regular coarse property.
An $\infty$-pseudo-metric space has the \textbf{probabilistic decomposition complexity} with respect to $\mathscr{P}$ if for each
$R, \epsilon > 0$ there is an $(R,\epsilon)$-continuous partition of unity on $X$ whose strata have $\mathscr{P}$
uniformly.
\end{Definition}

\begin{Theorem}\label{LongTreesOfPUsToShortTrees}
Suppose $\mathscr{P}$ is a regular coarse property and $X$ is an $\infty$-pseudo-metric space.
$X$ has probabilistic decomposition complexity with respect to $\mathscr{P}$ if for each
sequence $R, \epsilon_n > 0$ there is a tree $\mathscr{T}$ of partitions of unity on $X$ with the following properties:\\
1. the strata of leaves of $\mathscr{T}$ have $\mathscr{P}$
uniformly.\\
2. The vertices of $\mathscr{T}$ at depth $n$ are $(R,\epsilon_n)$-continuous.\\
3. $\mathscr{T}|\{x\}$ induces a probability tree for each $x\in X$.
\end{Theorem}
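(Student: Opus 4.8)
The plan is to read off probabilistic decomposition complexity directly from Theorem \ref{FundThmOnRootsOfPUs}, using the Observation following Definition \ref{tree of partitions of unity} that a tree of partitions of unity induces a single partition of unity whose strata are its leaves. So fix $R,\epsilon>0$. First I would choose positive reals $\epsilon_0,\epsilon_1,\epsilon_2,\dots$ with $\sum_{n\ge 0}\epsilon_n=\epsilon$, say $\epsilon_n=\epsilon/2^{n+1}$ (re-indexed if necessary to match the depth convention used in the hypothesis), and apply the hypothesis to this $R$ and this sequence to obtain a tree $\mathscr{T}$ of partitions of unity on $X$ satisfying (1)--(3).

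Next I would pass to the induced partition of unity $\phi(\mathscr{T})$. Condition (3), that $\mathscr{T}|\{x\}$ is a probability tree for each $x$, is precisely what guarantees that the numbers $\phi(\mathscr{T})_L(x)$, as $L$ ranges over the leaves of $\mathscr{T}$, sum to $1$; hence $\phi(\mathscr{T})$ is genuinely a partition of unity on $X$ even if $\mathscr{T}$ has infinite height. By the same Observation its strata are exactly the leaves of $\mathscr{T}$, equivalently the single strata of the trivial partitions of unity sitting at those leaves, and these have $\mathscr{P}$ uniformly by (1). Finally, condition (2) says every vertex of $\mathscr{T}$ at depth $n$ is $(\epsilon_n,R)$-continuous, so Theorem \ref{FundThmOnRootsOfPUs} gives that $\phi(\mathscr{T})$ is $(\sum_{n\ge 0}\epsilon_n,R)$-continuous, i.e. $(\epsilon,R)$-continuous. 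As $R$ and $\epsilon$ were arbitrary, $X$ has probabilistic decomposition complexity with respect to $\mathscr{P}$.

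There is no real obstacle here: the argument is essentially bookkeeping once Theorem \ref{FundThmOnRootsOfPUs} is in hand. The only point deserving attention is the infinite-height case --- one must make sure that $\phi(\mathscr{T})$ is well defined and that Theorem \ref{FundThmOnRootsOfPUs} genuinely covers trees of infinite height, both of which rest on the probability-tree condition (3); the finite-height case is immediate. A secondary nuisance is aligning the indexing of the $\epsilon_n$ with whatever depth convention (starting at $0$ or at $1$) the hypothesis intends, which merely shifts the series defining $\epsilon$ by one term.
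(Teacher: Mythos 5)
Your proposal is correct and follows essentially the same route as the paper: fix $R,\epsilon>0$, pick a summable sequence $\epsilon_n$ with sum at most $\epsilon$, feed it into the hypothesis, and apply Theorem \ref{FundThmOnRootsOfPUs} to conclude that the induced partition of unity $\phi(\mathscr{T})$ is $(\epsilon,R)$-continuous with strata the leaves, which have $\mathscr{P}$ uniformly. Your added remarks about well-definedness of $\phi(\mathscr{T})$ in the infinite-height case via the probability-tree condition are sound and merely make explicit what the paper leaves implicit.
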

\begin{proof}
Given $R, \epsilon > 0$ choose a sequence $\{\epsilon_i\}_{i\ge 0}$ of positive real numbers such that
$\epsilon > \sum\limits_{i=0}^\infty \epsilon_i$. The tree $\mathscr{T}$ of partitions of unity on $X$ with the  properties as above induces, in view of \ref{FundThmOnRootsOfPUs}, an $(R,\epsilon)$-continuous partition of unity on $X$ whose strata have $\mathscr{P}$
uniformly.
\end{proof}

The following definition generalizes straight finite decomposition complexity of
Dranishnikov and Zarichnyi \cite{DZ} who consider only the case of $n_i=2$
and $\mathscr{P}$ is the property of being bounded. Also, it generalizes the concept
of countable asymptotic dimension from \cite{Dyd1}. 

\begin{Definition}\label{straight finite decomposition complexity}
Suppose $\mathscr{P}$ is a regular coarse property.
An $\infty$-pseudo-metric space has a \textbf{countable asymptotic dimension} with respect to $\mathscr{P}$ if there is a sequence of integers $n_i\ge 0$ such that for each sequence
$R_i > 0$, $X$ has an $\{(R_i,n_i)\}_{i\ge 1}$-decomposition tree whose leaves satisfy $\mathscr{P}$
uniformly.
\end{Definition}

\begin{Theorem}
Suppose $\mathscr{P}$ is a regular coarse property. If an $\infty$-pseudo-metric space $X$
has countable asymptotic dimension with respect to $\mathscr{P}$, then it has
probabilistic decomposition complexity with respect to $\mathscr{P}$.
\end{Theorem}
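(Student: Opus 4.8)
The plan is to reduce this statement to Theorem~\ref{ConvertDecompositionTreeToPUTree} together with Theorem~\ref{LongTreesOfPUsToShortTrees}, so that the entire argument becomes a matter of chaining the machinery already in place. First I would observe that "countable asymptotic dimension with respect to $\mathscr{P}$" provides, for the space $X$, a sequence $\{n_i\}_{i\ge1}$ of integers with the property that for \emph{every} sequence $\{R_i\}_{i\ge1}$ of positive reals there is an $\{(R_i,n_i)\}_{i\ge1}$-decomposition tree of $X$ whose leaves satisfy $\mathscr{P}$ uniformly. The only subtlety is that Theorem~\ref{ConvertDecompositionTreeToPUTree} requires the decomposition tree to have \emph{finite height}, whereas Definition~\ref{straight finite decomposition complexity} as stated does not explicitly impose finite height on the decomposition tree. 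However, the leaves of an $\{(R_i,n_i)\}_{i\ge1}$-decomposition tree are required (in the definition of countable asymptotic dimension as used here) to be the bottom of a tree whose branching at even depth is bounded by $n_{i+1}$; the key point is that for the purpose of producing a tree of partitions of unity one may truncate. So the first real step is: given a target sequence $\{(\epsilon_i,R_i)\}_{i\ge1}$, pass to the decomposition tree guaranteed by countable asymptotic dimension and, if necessary, observe that one only needs its behavior down to finite depth to invoke the finite-height hypothesis of Theorem~\ref{ConvertDecompositionTreeToPUTree}.

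Next I would invoke Theorem~\ref{ConvertDecompositionTreeToPUTree} directly: with $R>0$ fixed and a sequence $\{\epsilon_i\}_{i\ge1}$ of positive reals chosen in advance, the decomposition tree hypothesis yields a tree $\mathcal{T}$ of partitions of unity on $X$ such that the induced partition of unity has strata satisfying $\mathscr{P}$ uniformly and each vertex at depth $i$ is $(\epsilon_i,R)$-continuous. This is precisely the input required by Theorem~\ref{LongTreesOfPUsToShortTrees}, provided we also confirm condition~3 there, namely that $\mathcal{T}|\{x\}$ induces a probability tree for each $x\in X$; but that is part of the definition of a tree of partitions of unity (condition~5 of Definition~\ref{tree of partitions of unity}), so it is automatic. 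Thus the second step is simply to check that the output of Theorem~\ref{ConvertDecompositionTreeToPUTree} matches the three hypotheses of Theorem~\ref{LongTreesOfPUsToShortTrees} verbatim.

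The final step is to apply Theorem~\ref{LongTreesOfPUsToShortTrees} to conclude that $X$ has probabilistic decomposition complexity with respect to $\mathscr{P}$. Concretely: given $R,\epsilon>0$, pick a summable sequence $\{\epsilon_i\}_{i\ge1}$ with $\sum_{i\ge1}\epsilon_i<\epsilon$, feed $R$ and this sequence into Theorem~\ref{ConvertDecompositionTreeToPUTree} to obtain the tree of partitions of unity, then feed that tree into Theorem~\ref{LongTreesOfPUsToShortTrees} (whose proof collapses the tree via Theorem~\ref{FundThmOnRootsOfPUs}) to extract a single $(R,\epsilon)$-continuous partition of unity on $X$ whose strata have $\mathscr{P}$ uniformly. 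Since $R$ and $\epsilon$ were arbitrary, this is exactly Definition~\ref{Probabilistic decomposition complexity}.

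I expect the main obstacle to be a bookkeeping issue rather than a conceptual one: reconciling the indexing conventions, specifically the fact that Theorem~\ref{ConvertDecompositionTreeToPUTree} is phrased with a pair $(\epsilon_i,R_i)$ of sequences while Theorem~\ref{LongTreesOfPUsToShortTrees} uses a single constant $R$ with a sequence $\{\epsilon_n\}$, and making sure the finite-height requirement of the decomposition tree is met (or harmlessly arranged by truncation, using that leaves of the decomposition tree already satisfy $\mathscr{P}$ uniformly and that $\mathscr{P}$ is regular so that passing to a coarser, still-finite tree preserves everything needed). Once the conventions are aligned, the proof is a two-line citation.
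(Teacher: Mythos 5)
Your proposal is essentially identical to the paper's proof, which consists precisely of the two-line citation ``Apply \ref{ConvertDecompositionTreeToPUTree} and \ref{LongTreesOfPUsToShortTrees}.'' The finite-height mismatch you flag between Definition \ref{straight finite decomposition complexity} and the hypothesis of Theorem \ref{ConvertDecompositionTreeToPUTree} is real but is left unaddressed by the paper itself, so your extra care on that point goes beyond, rather than falls short of, the published argument.
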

\begin{proof}
Apply \ref{ConvertDecompositionTreeToPUTree} and \ref{LongTreesOfPUsToShortTrees}.
\end{proof}

\section{Exactness and large scale paracompactness}
Notice that large scale paracompactness is a special case of exactness. In this section we give a sufficient condition for the two properties being equivalent.

\begin{Proposition}\label{ConnectionBetweenExactnessAndLSParacompactness}
Suppose $\mathscr{P}$ is a regular coarse property and $X$ is an $\infty$-pseudo-metric space.
If $X$ has probabilistic decomposition complexity with respect to $\mathscr{P}$ and its coarse structure is determined by (point-finite) partitions of unity, then for each
$R, \epsilon > 0$ there is a (point-finite) $(R,\epsilon)$-continuous partition of unity on $X$ whose strata have $\mathscr{P}$
uniformly and form a cover of $X$ of Lebesgue number at least $R$.
\end{Proposition}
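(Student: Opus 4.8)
The plan is to combine the two hypotheses by running the construction of Corollary \ref{ExistenceOfREpsilonPUs} twice at once: one family of sets witnessing the probabilistic decomposition complexity (to control the $\mathscr{P}$ of the strata) and one cover witnessing that the coarse structure is determined by (point-finite) partitions of unity (to force a large Lebesgue number), and then to take a common refinement via the wedge/product mechanism of Lemma \ref{FundLemmaOnRootedTreesOfPUs}. Concretely, fix $R,\epsilon>0$. First I would use probabilistic decomposition complexity to obtain an $(R,\epsilon/2)$-continuous partition of unity $\psi=\{\psi_t\}_{t\in T}$ on $X$ whose strata $\{\psi_t^{-1}(0,1]\}_{t\in T}$ have $\mathscr{P}$ uniformly. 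Separately, since the coarse structure of $X$ is determined by (point-finite) partitions of unity, I would pick for a suitably large parameter $R'\ge R$ (to be calibrated below) a (point-finite) partition of unity whose strata form a uniformly bounded cover $\VV=\{V_s\}_{s\in S}$ of Lebesgue number at least $R'$; after passing to $B(\cdot,R)$-enlargements if needed and invoking Corollary \ref{ExistenceOfREpsilonPUs}, I may assume I have in hand an $(\epsilon/2,R)$-continuous (point-finite) partition of unity $\phi^{\VV}_R$ whose strata are uniformly bounded and of Lebesgue number at least $R$.

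The second step is to form the "product" partition of unity $\chi$ indexed by $T\times S$ with $\chi_{(t,s)}(x)=\psi_t(x)\cdot(\phi^{\VV}_R)_s(x)$. Each stratum of $\chi$ is contained in $\psi_t^{-1}(0,1]\cap V_s'$, where $V_s'$ is the $s$-th stratum of $\phi^{\VV}_R$; since $V_s'$ is uniformly bounded, each such stratum is a bounded subset of a stratum of $\psi$, and by hereditariness (built into the meaning of a regular coarse property here, or rather by the fact that bounded subsets of a $\mathscr{P}$-space still assemble into a $\mathscr{P}$-family — this is where I would invoke Proposition \ref{BallEnlargementProp} / Lemma \ref{BasicLemmaAboutRepsilonPDecomposition} in spirit) the strata of $\chi$ have $\mathscr{P}$ uniformly. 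Moreover, $\chi$ is a two-level tree of partitions of unity: the root is $\psi$ and the vertex over $t$ is the partial partition of unity $\{(\phi^{\VV}_R)_s|_{\psi_t^{-1}(0,1]}\}_{s}$, which is $(\epsilon/2,R)$-continuous as a restriction of an $(\epsilon/2,R)$-continuous partition of unity. By Lemma \ref{FundLemmaOnRootedTreesOfPUs} (or Theorem \ref{FundThmOnRootsOfPUs} with a two-term sum), $\chi=\phi(\mathcal{T})$ is $(\epsilon/2+\epsilon/2,R)=(\epsilon,R)$-continuous. If $\phi^{\VV}_R$ was chosen point-finite, then so is $\chi$, since each $x$ meets finitely many $V_s'$.

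The remaining point, and the one requiring the most care, is the Lebesgue number: I need the strata of $\chi$ to cover $X$ with Lebesgue number at least $R$, whereas a priori multiplying by $\psi_t$ can shrink a stratum of $\phi^{\VV}_R$. The fix is to not multiply honestly but to "thicken": rather than $\chi_{(t,s)}=\psi_t\cdot(\phi^{\VV}_R)_s$, I would instead build the cover of $X$ by the sets $W_{(t,s)}:=V_s'\cap \psi_t^{-1}(0,1]$ only when this intersection is nonempty, check that $\{W_{(t,s)}\}$ still has Lebesgue number at least $R$ because $\{V_s'\}$ does and the $\psi_t$-strata cover $X$ (so any $R$-ball $B(x,R)$ lies inside some $V_s'$, and within that ball one of the finitely many relevant $\psi_t$ is bounded away from $0$ — here I would use the $(R,\epsilon/2)$-continuity of $\psi$ to guarantee that if $\psi_t(x)$ is not too small then $\psi_t>0$ on all of $B(x,R)$, hence $B(x,R)\subset W_{(t,s)}$), and then take $\phi^{\WW}_R$ from Corollary \ref{ExistenceOfREpsilonPUs} applied to $\WW=\{W_{(t,s)}\}$, with the parameter $R'$ above chosen as $R'=\frac{8m R}{\epsilon}$ where $m$ is the (uniform) multiplicity bound, so that the Lebesgue-number hypothesis of \ref{ExistenceOfREpsilonPUs} is met and $\phi^{\WW}_R$ is $(\epsilon,R)$-continuous with strata contained in the $W_{(t,s)}$, hence uniformly bounded inside strata of $\psi$ and therefore having $\mathscr{P}$ uniformly, and of Lebesgue number at least $R$. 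The main obstacle is precisely this bookkeeping: arranging a single choice of the auxiliary parameter so that the Lebesgue number survives the intersection with the $\psi$-strata while the multiplicity (in the point-finite case) and the $\mathscr{P}$-uniformity are simultaneously preserved; once the parameters are calibrated, each individual verification is a direct application of the cited lemmas.
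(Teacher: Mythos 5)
There is a genuine gap at exactly the step you flag as delicate: the Lebesgue number. Your fix rests on the claim that, because the PDC partition of unity $\psi$ is $(\epsilon/2,R)$-continuous, for each $x$ some single $\psi_t$ is ``bounded away from $0$'' near $x$, hence stays positive on all of $B(x,R)$, so that $B(x,R)\subset W_{(t,s)}=V_s'\cap\psi_t^{-1}(0,1]$. This is false in general: $(\epsilon/2,R)$-continuity only bounds the $\ell_1$-variation of the whole vector $\psi(x)$, and the mass at $x$ may be spread over many indices, each of which vanishes somewhere in $B(x,R)$. Concretely, let $X=\{1,\dots,n\}$ with all pairwise distances equal to $R$ and $\psi_t(x)=\frac{1}{n-1}$ for $t\neq x$, $\psi_x(x)=0$. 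Then $|\psi(x)-\psi(y)|_1=\frac{2}{n-1}\leq\epsilon/2$ for $n$ large, yet no stratum of $\psi$ contains the set $X$, which has diameter $R$; intersecting any cover with the strata of $\psi$ therefore destroys the Lebesgue number. (This is unavoidable: probabilistic decomposition complexity by itself carries no Lebesgue-number information, which is the whole point of the extra hypothesis in the Proposition.) A secondary problem is that your final appeal to Corollary \ref{ExistenceOfREpsilonPUs} applied to $\WW=\{W_{(t,s)}\}$ needs a global multiplicity bound $m$ for $\WW$, and the multiplicity of $\WW$ at a point is (multiplicity of $\VV'$) times the number of $\psi$-strata at that point, which need not even be finite, let alone uniformly bounded; for the same reason the earlier claim that the product $\chi$ is point-finite whenever $\phi^{\VV}_R$ is does not hold unless $\psi$ is point-finite.

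The paper avoids refining altogether by a convex-combination trick: take an $(R,\epsilon/4)$-continuous partition of unity $\phi$ with strata having $\mathscr{P}$ uniformly (from PDC) and a partition of unity $\psi$ with uniformly bounded strata of Lebesgue number at least $R$ (from the coarse-structure hypothesis), assume disjoint index sets, and set $f=(1-\epsilon/4)\,\phi+\frac{\epsilon}{4}\,\psi$. The strata of $f$ are the strata of $\phi$ together with the strata of $\psi$; the latter alone already give Lebesgue number at least $R$, they have $\mathscr{P}$ uniformly via \ref{BallEnlargementProp} since they are uniformly bounded, and no continuity of $\psi$ is needed because its total contribution to $|f(x)-f(y)|$ is at most $2\cdot\frac{\epsilon}{4}$. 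The point-finite case is then handled by trimming $\phi$ (reallocating all but $1-\epsilon/8$ of its mass at each point to finitely many indices), with $\psi$ point-finite by hypothesis. If you want to salvage your write-up, replacing the intersection/refinement step by this additive step is the way to go; as it stands the Lebesgue-number argument does not work.
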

\begin{proof}
Suppose $R, \epsilon > 0$. Pick an $(R,\epsilon/4)$-continuous partition of unity $\phi=\{\phi_s\}_{s\in S}$ on $X$ whose strata have $\mathscr{P}$ uniformly. Also, pick a partition of unity 
$\psi=\{\psi_t\}_{t\in T}$ on $X$ whose strata are uniformly bounded and of Lebesgue number at least $R$. Assume $S$ and $T$ are disjoint and define
$f(x)$ as $(1-\epsilon/4)\cdot \phi(x)+0.25\cdot\epsilon\cdot \psi(x)$.
Notice $f$ is $(R,\epsilon)$-continuous and its strata have $\mathscr{P}$
uniformly by \ref{BallEnlargementProp}.

In the point-finite case we can trim $\phi$ to a point-finite partition of unity by
choosing, for each $x\in X$, a finite subset $S(x)$ of $S$ such that
$\sum\limits_{s\in S(x)}\phi_s(x) > 1-\epsilon/8$ and allocating
$\sum\limits_{s\notin S(x)}\phi_s(x)$ to a particular $s\in S(x)$.
\end{proof}

Recall that a map $f$ from $X$ to a metric space $Y$ is $(\delta,\delta)$-Lipschitz if $d_Y(f(x),f(y))\leq \delta\cdot d_X(x,y)+\delta$ for all $x,y\in X$.

The following is shown in \cite{Dyd1} (see Theorem 6.4) in case of metric spaces $X$. The same proof works for $\infty$-pseudo-metric spaces.

\begin{Theorem}\label{ExactnessViaPUExtension}
Suppose $X$ is an $\infty$-pseudo-metric space.
Choose a set $S$ whose cardinality is larger that $card(X\times \mathbb{N})$. 
$X$ being exact is equivalent to the existence of functions
 $\alpha: (0,\infty)\to (0,\infty)$, $M: (0,\infty)\times (0,\infty)\to (0,\infty)$ such that for any $K > 0$, any $(\alpha(\delta),\alpha(\delta))$-Lipschitz map $f:A\subset X\to \Delta(S)$ ($\delta > 0$) that is $K$-cobounded (i.e. point-inverses of stars of vertices of $\Delta(S)$ have diameter at most $K$),
extends to a $(\delta,\delta)$-Lipschitz map $g:X\to \Delta(S)$ that is $M(\delta,K)$-cobounded.
\end{Theorem}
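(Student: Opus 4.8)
The plan is to establish the equivalence by proving the two implications separately; the forward one, \emph{exactness $\Rightarrow$ extension property}, carries all the weight, while the reverse one is almost immediate. The device that makes everything fit together is the following elementary remark: since $\Delta(S)$ lies in the unit sphere of $l_1(S)$, any two of its points are at distance at most $2$, so a map $g\colon X\to\Delta(S)$ is $(\delta,\delta)$-Lipschitz \emph{as soon as} it is merely $(\delta,2/\delta)$-continuous --- when $d(x,y)\le 2/\delta$ one has $|g(x)-g(y)|\le\delta\le\delta\, d(x,y)+\delta$, and when $d(x,y)>2/\delta$ (or $d(x,y)=\infty$) one has $|g(x)-g(y)|\le 2<\delta\, d(x,y)+\delta$. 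Dually, a $(\delta,\delta)$-Lipschitz map is $(\epsilon,R)$-continuous with $\epsilon=\delta(R+1)$. This is what will let every constant below depend only on $\delta$ (and the cobounding constant $K$).

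For the reverse implication, given $R,\epsilon>0$ I would put $\delta=\epsilon/(R+1)$ and apply the extension hypothesis with $A=\emptyset$ (or with $A$ a single point mapped to a vertex, which is trivially $(\alpha,\alpha)$-Lipschitz and $0$-cobounded): the resulting $g\colon X\to\Delta(S)$ is $(\delta,\delta)$-Lipschitz and $M(\delta,1)$-cobounded, so its barycentric coordinate functions form a partition of unity whose strata (the point-inverses of open stars) have diameter at most $M(\delta,1)$, hence are uniformly bounded, and which is $(\epsilon,R)$-continuous by the dual remark above. Thus $X$ is exact.

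For the forward implication I would argue by a direct construction. Fix $\delta>0$, set $R_0=2/\delta$ and $L=8/\delta^2$ (so $R_0/L\le\delta/4$), and use exactness to pick a partition of unity $\psi=\{\psi_t\}_{t\in T}$ on $X$ that is $(\delta/4,R_0)$-continuous with strata of diameter at most some $S_0=S_0(\delta)$. Given the data $f\colon A\subset X\to\Delta(S)$, observe that $\operatorname{supp}(f)=\{s\in S\colon f_s(a)>0\text{ for some }a\in A\}$ and $T$ each have cardinality at most $\operatorname{card}(X\times\NN)<\operatorname{card}(S)$, so one may relabel $\psi$ so that $T\subseteq S\setminus\operatorname{supp}(f)$. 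Put $\rho(x)=\operatorname{dist}(x,A)\in[0,\infty]$ and $c(x)=\max\{0,\,1-\rho(x)/L\}$, so that $c\equiv1$ on $A$, $c\equiv0$ off the $L$-collar of $A$, and $|c(x)-c(y)|\le d(x,y)/L$; choose $\pi(x)\in A$ with $d(x,\pi(x))<\rho(x)+1$ whenever $c(x)>0$, with $\pi(x)=x$ for $x\in A$; and define
$$g(x)=c(x)\cdot f(\pi(x))+(1-c(x))\cdot\psi(x),$$
the first summand being omitted when $c(x)=0$. Then $g\colon X\to\Delta(S)$, and $g|_A=f$ because $c\equiv1$ and $\pi=\mathrm{id}$ on $A$. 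I would then declare $\alpha(\delta)=\dfrac{\delta}{4(2L+R_0+3)}$ and $M(\delta,K)=\max\{S_0,\,K+2L+2\}$, both depending only on the displayed data.

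It remains to verify the three claimed properties of $g$. Coboundedness splits by the index $v$: for $v\in T$ one has $g_v=(1-c)\psi_v$ since $f_v\equiv0$ on $A$, so $g^{-1}(\operatorname{st}(v))$ is contained in a stratum of $\psi$ (diameter $\le S_0$); for $v\notin T$ one has $g_v=c\cdot(f_v\circ\pi)$, so $g^{-1}(\operatorname{st}(v))\subseteq B\bigl(f^{-1}(\operatorname{st}(v)),L+1\bigr)$, of diameter at most $K+2L+2$ once $f$ is $K$-cobounded --- here the disjointness $T\cap\operatorname{supp}(f)=\emptyset$ is essential. For the Lipschitz estimate it suffices, by the remark in the first paragraph, to bound $|g(x)-g(y)|$ by $\delta$ when $d(x,y)\le R_0<\infty$; writing
$$g(x)-g(y)=\bigl(c(x)(f(\pi x)-f(\pi y))+(c(x)-c(y))f(\pi y)\bigr)+\bigl((1-c(x))(\psi(x)-\psi(y))+(c(y)-c(x))\psi(y)\bigr)$$
and using $|c(x)-c(y)|\le R_0/L\le\delta/4$, $|\psi(x)-\psi(y)|\le\delta/4$, and --- when $c(x),c(y)>0$, so $\rho(x),\rho(y)<L$ and hence $d(\pi x,\pi y)<2L+R_0+2$ --- $|f(\pi x)-f(\pi y)|\le\alpha(\delta)\,d(\pi x,\pi y)+\alpha(\delta)\le\alpha(\delta)(2L+R_0+3)=\delta/4$ (the cases $c(x)=0$ or $c(y)=0$ being only easier), one gets $|g(x)-g(y)|\le\delta$. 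The step I expect to be the genuine obstacle is exactly this bookkeeping: to get $\alpha$ depending on $\delta$ alone one is forced to choose the collar width $L$ from $\delta$ alone, and this is feasible only because the first-paragraph reduction confines the entire Lipschitz verification to the single scale $R_0=2/\delta$, so that the diameter bound $2L+R_0$ on $d(\pi x,\pi y)$ does not involve $K$; the constant $K$ then enters only through the additive term in $M(\delta,K)$, harmlessly.
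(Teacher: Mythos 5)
Your proposal is correct, but there is nothing in the paper to compare it with line by line: the paper offers no proof of this theorem, saying only that it is Theorem 6.4 of \cite{Dyd1} in the metric case and that ``the same proof works for $\infty$-pseudo-metric spaces.'' What you have written is a self-contained argument for exactly the statement the paper delegates to that reference, and in the required generality; it belongs to the same circle of ideas as the cited result (blend the given map with a partition of unity supplied by exactness along a collar of $A$). The reverse implication via a one-point $A$ and the translation between $(\delta,\delta)$-Lipschitz and $(\epsilon,R)$-continuity is fine, and in the forward direction the crucial quantitative point is handled correctly: the reduction to the single scale $R_0=2/\delta$ (using $\diam\Delta(S)\le 2$) lets the collar width $L$ and hence $\alpha$ depend on $\delta$ alone, while $K$ enters only additively through $M(\delta,K)=\max\{S_0(\delta),K+2L+2\}$; the re-indexing of $\psi$ into $S\setminus\operatorname{supp}(f)$ is legitimate because each point has countable support, so both $\operatorname{supp}(f)$ and the index set of $\psi$ have cardinality at most $card(X\times\mathbb{N})<card(S)$, and this disjointness is indeed what makes the coboundedness estimate split cleanly by vertex. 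Two small spots deserve an explicit line: when $c(y)=0<c(x)$ your displayed decomposition refers to $f(\pi y)$, which is undefined; bound instead $|g(x)-g(y)|\le c(x)\,|f(\pi x)-\psi(y)|+(1-c(x))\,|\psi(x)-\psi(y)|\le 2c(x)+\delta/4\le 3\delta/4$, using $c(x)=c(x)-c(y)\le\delta/4$ --- this is the ``easier'' case you allude to; and in the coboundedness estimate for $v\notin T$ one should note that $g_v(x)>0$ forces $c(x)>0$, hence $\rho(x)<L$ and $d(x,\pi(x))<L+1$, which is what places $x$ in $B\bigl(f^{-1}(\st(v)),L+1\bigr)$. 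With those remarks the proof is complete.
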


\begin{Proposition}\label{ExactnessIsRegular}
Being exact is a regular coarse property.
\end{Proposition}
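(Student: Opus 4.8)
The plan is to verify the two clauses of Definition~\ref{RegularCoarseProperty} simultaneously. In each clause we are handed a family $\{X_s\}_{s\in S}$ with chosen basepoints $x_s\in X_s$; set $Y=\bigvee_{s\in S}(X_s,x_s)$ with basepoint $y_0=\{x_s\}_{s\in S}$, and for given $R,\epsilon>0$ we must exhibit an $(\epsilon,R)$-continuous partition of unity on $Y$ with uniformly bounded strata. First I would fix, for each $s$, an $(\epsilon/2,R)$-continuous partition of unity $\phi^s=\{\phi^s_c\}_{c\in C_s}$ on $X_s$ whose strata have diameter at most $D_s$, with the index sets $C_s$ pairwise disjoint: in clause~1 this is a single such partition of unity on the exact space $X$ (with strata bound $D$) transported through the isometries $X\cong X_s$, so $D_s\equiv D$; in clause~2 the family $S$ is finite and each $X_s$ is exact, so one may take $D=\max_s D_s<\infty$. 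Then extend each $\phi^s_c$ to $Y$ by $0$ off $X_s$.

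Next I would build the partition of unity. Put $\psi_0(y)=\max\!\big(0,\,1-\tfrac{\epsilon}{4R}\max(0,d_Y(y,y_0)-R)\big)$ and $\psi_1=1-\psi_0$; then $\{\psi_0,\psi_1\}$ is a partition of unity on $Y$ which is $(\epsilon/2,R)$-continuous, since $\psi_0=h\circ d_Y(\cdot,y_0)$ for an $\tfrac{\epsilon}{4R}$-Lipschitz real function $h$ with plateau of radius $R$, and $d_Y(\cdot,y_0)$ is $1$-Lipschitz (with the conventions that $d_Y(y,y_0)=\infty$ forces $\psi_0(y)=0$, and that two points at distance $\le R$ from each other are either both or neither at infinite distance from $y_0$). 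On the stratum $\psi_0^{-1}(0,1]$, which has diameter at most $2R+8R/\epsilon$, let $f_{\psi_0}$ be the constant map to the single vertex of $l_1(\{\ast\})$; on the stratum $\psi_1^{-1}(0,1]=\{y:d_Y(y,y_0)>R\}$, every point of which lies in a unique leaf $X_s$, let $f_{\psi_1}(y)=\phi^s(y)\in\Delta(C_s)$, viewed in the unit sphere of $l_1(\bigsqcup_s C_s)$. The two metric facts about the wedge that make this work are: (a) for $y\in X_s$ one has $d_Y(y,y_0)=d_s(y,x_s)$ and $d_Y$ restricted to $X_s$ agrees with $d_s$; and (b) if $d_Y(y,y')\le R$ with $y\in X_s$, $y'\in X_t$ and $s\ne t$, then $d_Y(y,y_0)=d_s(y,x_s)\le R$ and likewise for $y'$, so such a pair never both lies in $\psi_1^{-1}(0,1]$. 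Fact (b) shows $f_{\psi_1}$ is well defined and $(\epsilon/2,R)$-continuous on $\psi_1^{-1}(0,1]$: any two of its points at distance $\le R$ share a leaf $X_s$, and there $\|\phi^s(y)-\phi^s(y')\|_1\le\epsilon/2$ by fact (a) and the choice of $\phi^s$; and $f_{\psi_0}$ is trivially $(0,R)$-continuous.

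Now Lemma~\ref{FundLemmaOnRootedTreesOfPUs}, applied to $\{\psi_0,\psi_1\}$ with $\epsilon_0=\epsilon_1=\epsilon/2$, yields that $f=\psi_0\cdot f_{\psi_0}+\psi_1\cdot f_{\psi_1}$ is $(\epsilon,R)$-continuous; unwinding coordinates, $f$ is exactly the partition of unity whose extra stratum is $\psi_0^{-1}(0,1]$ and whose remaining strata are the sets $\{y\in X_s:\psi_1(y)>0,\ \phi^s_c(y)>0\}\subseteq(\phi^s_c)^{-1}(0,1]$, for $s\in S$ and $c\in C_s$. The extra stratum has diameter at most $2R+8R/\epsilon$ and the others have diameter at most $D$ (by fact (a)), so the strata are uniformly bounded and $Y$ is exact; this proves Proposition~\ref{ExactnessIsRegular}. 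The only real work — and the step I would treat as the main obstacle — is the bookkeeping around the basepoint: one must choose the plateau radius of $\psi_0$ to equal $R$ precisely so that fact (b) kills all cross-leaf interactions, and then route the whole continuity estimate through Lemma~\ref{FundLemmaOnRootedTreesOfPUs} (alternatively one can run the estimate by hand via the case split ``$y,y'$ in a common $X_s$ versus in distinct leaves'', using $|ab-a'b'|\le|a-a'|b+a'|b-b'|$ summed over $c$), while being careful that the $\infty$-valued distances do not spoil either the continuity of $\psi_0$ or that of $f_{\psi_1}$.
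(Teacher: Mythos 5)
Your construction is correct, but it is a genuinely different route from the paper's. The paper proves Proposition~\ref{ExactnessIsRegular} through Theorem~\ref{ExactnessViaPUExtension}: it fixes the uniform moduli $\alpha$ and $M$ furnished by that extension theorem, extends over each copy $X_s$ the map sending the basepoint $x_s$ to one fixed vertex $t_0$ of $\Delta(T)$, obtaining $(\epsilon/2,\epsilon/2)$-Lipschitz, $K$-cobounded maps with the same $K$ for all $s$, and then pastes; cross-leaf continuity is automatic because every piece takes the value $\delta_{t_0}$ at the wedge point, and uniform coboundedness across infinitely many copies is exactly what the uniform function $M(\delta,K)$ of the extension theorem provides. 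You instead argue directly from the definition of exactness: you kill all cross-leaf pairs with the collar function $\psi_0$ (whose plateau of radius $R$ guarantees that two points of $\psi_1^{-1}(0,1]$ within distance $R$ lie in a common leaf, since $d_Y(y,y')=d_s(y,x_s)+d_t(y',x_t)$ for $s\ne t$), and you assemble the final partition of unity via Lemma~\ref{FundLemmaOnRootedTreesOfPUs}, obtaining one extra stratum of diameter at most $2R+8R/\epsilon$ near the wedge point; in clause 1 the uniform bound on strata comes for free by transporting a single partition of unity through the isometries, and in clause 2 from finiteness of $S$. What each approach buys: yours is self-contained (it uses only the definition of exactness and Lemma~\ref{FundLemmaOnRootedTreesOfPUs}, not Theorem 6.4 of \cite{Dyd1}), and it makes the only real difficulty --- continuity across distinct leaves near the wedge point --- visible and disposable by the hedgehog-type collar, much in the spirit of the union-permanence discussion later in the paper; the paper's argument is shorter once the extension theorem is available, yields Lipschitz (not merely $(\epsilon,R)$-continuous) data with prescribed basepoint values, and serves as a template reused verbatim for Proposition~\ref{WeakParacompactnessIsRegular} and in the stability section. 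Your handling of the $\infty$-valued distances (both-or-neither finiteness of $d_Y(\cdot,y_0)$ for points at distance at most $R$) is the right bookkeeping and closes the one place where the collar argument could have leaked.
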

\begin{proof}
Let $\mathscr{P}$ be the class of exact $\infty$-pseudo-metric spaces.
According to \ref{RegularCoarseProperty}
we need to check the following conditions:\\
1. If an $\infty$-pseudo-metric space $X$ satisfies $\mathscr{P}$ and $A_s$ is an isometric copy of $X$ for $s\in S$, then $\bigvee\limits_{s\in S} (A_s,x_s)$ has $\mathscr{P}$ for any choice of $x_s\in A_s$.\\
2. If each element of a finite family $\{X_s\}_{s\in S}$ of $\infty$-pseudo-metric spaces satisfies $\mathscr{P}$, then $\bigvee\limits_{s\in S} (A_s,x_s)$ has $\mathscr{P}$ for some choice of $x_s\in A_s$.

We will only show 1) as the proof of 2) is similar. Choose a set $T$ whose cardinality is larger that $card(X\times \mathbb{N})$  and consider functions  $\alpha: (0,\infty)\to (0,\infty)$, $M: (0,\infty)\times (0,\infty)\to (0,\infty)$ as in \ref{ExactnessViaPUExtension}.
Given $\epsilon > 0$ we choose
for each $s\in S$ an extension $\phi_s:A_s\to \Delta(T)$ of the map sending $x_s$
to a fixed vertex $t_0$ of $\Delta(T)$, an extension which is $(\epsilon/2,\epsilon/2)$-Lipschitz and is $K$-cobounded with $K$ the same for all $s\in S$.
Pasting all $\phi_t$ gives an $(\epsilon,\epsilon)$-Lipschitz function from
$\bigvee\limits_{s\in S} (A_s,x_s)$ to $\Delta(\bigvee\limits_{s\in S}(T,t_0))$ that is $2K$-cobounded.
\end{proof}

\begin{Proposition}\label{WeakParacompactnessIsRegular}
Having the coarse structure determined by point-finite partitions of unity is a regular coarse property.
\end{Proposition}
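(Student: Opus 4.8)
Write $\mathscr P$ for the class of $\infty$-pseudo-metric spaces whose coarse structure is determined by point-finite partitions of unity. This is plainly a coarse property, so by Definition \ref{RegularCoarseProperty} the plan is to verify the two closure conditions: (1) if $X\in\mathscr P$ and each $X_s$, $s\in S$, is an isometric copy of $X$, then $\bigvee_{s\in S}(X_s,x_s)\in\mathscr P$ for every choice of base points; and (2) the same conclusion when $S$ is finite and every $X_s$ lies in $\mathscr P$. I would handle both at once. Fix $R>0$. For each $s$ choose a point-finite partition of unity $\psi^s=\{\psi^s_t\}_{t\in T_s}$ on $X_s$ with strata of diameter at most $D$ and Lebesgue number at least $R$, where the bound $D$ is independent of $s$: in case (1) this is automatic after transporting one partition of unity on $X$ through the isometries, and in case (2) it holds by taking the maximum of the finitely many bounds.

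Put $Y=\bigvee_{s\in S}(X_s,x_s)$ with base point $y_0$, and recall that inside $Y$ each arm $X_s$ carries its own metric, the arms meet only at $y_0$, one has $d_Y(z,y_0)=d_{X_s}(z,x_s)$ for $z$ in arm $X_s$, and $d_Y$ between points of distinct arms equals the sum of their distances to $y_0$. The construction I propose is: let $W$ be the union, over all $s$, of those strata of $\psi^s$ that meet $B(x_s,2R)$, and keep every remaining stratum (call these \emph{outer}) unchanged; let $\mathcal W$ be the resulting family on $Y$. I would then check that $\mathcal W$ is a point-finite cover of $Y$ by uniformly bounded sets of Lebesgue number at least $R$: it covers $Y$ because every point lies in some arm, hence in some stratum of the corresponding $\psi^s$; each outer stratum has diameter $\leq D$ while a stratum meeting $B(x_s,2R)$ lies in $B(x_s,2R+D)$, so $W\subseteq B(y_0,2R+D)$, giving uniform boundedness; a point of $X_s\setminus\{y_0\}$ lies in no stratum from a different arm and hence in finitely many outer strata plus possibly $W$, while $y_0$ lies only in $W$ since every outer stratum avoids $x_s$; and for the Lebesgue number, if $d_Y(y,y_0)\leq R$ then $B_Y(y,R)\subseteq B_Y(y_0,2R)\subseteq W$ (every point of $B_Y(y_0,2R)$ sits in an arm within $2R$ of $x_s$, hence in a stratum of $\psi^s$ meeting $B(x_s,2R)$), whereas if $d_Y(y,y_0)>R$ then $y$ lies in a unique arm $X_s$ and $B_Y(y,R)=B_{X_s}(y,R)$, which by the choice of $\psi^s$ sits in a stratum $V$ of $\psi^s$, and $V$ is either outer (a member of $\mathcal W$) or absorbed into $W\in\mathcal W$. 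Normalizing the characteristic functions of the members of $\mathcal W$ then yields a point-finite partition of unity on $Y$ whose strata are exactly the members of $\mathcal W$; since $R$ was arbitrary, $Y\in\mathscr P$.

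The only real obstacle is the behaviour at the base point $y_0$, where all arms collide: without the merging step the combined family would fail to be point-finite there (and, when $S$ is infinite, $W$ would not be bounded). Absorbing every stratum that comes close to some $x_s$ into the single set $W$ repairs both defects simultaneously, and the cost, enlarging the bounded constant from $D$ to $2R+D$, is harmless. The Lebesgue-number verification away from $y_0$ rests on the elementary but essential observation that at scale $R$ a ball in $Y$ centered at distance $>R$ from $y_0$ cannot meet two arms, so the local geometry there is just that of one $X_s$ and the Lebesgue number $R$ is inherited.
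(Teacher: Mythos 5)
Your construction is correct, but it takes a genuinely different route from the paper. The paper disposes of this proposition in one line, "similar to that of Proposition \ref{ExactnessIsRegular}", i.e.\ it reruns the argument based on the extension characterization of Theorem \ref{ExactnessViaPUExtension}: one fixes uniform functions $\alpha$ and $M$, extends on each arm the map sending the base point $x_s$ to a fixed vertex of $\Delta(T)$ with uniform Lipschitz and coboundedness constants, and pastes the extensions over the wedge. You instead argue directly at the level of covers: transport (case 1) or take the maximum over the finite index set (case 2) to get partitions $\psi^s$ on the arms with a common diameter bound $D$ and Lebesgue number $\geq R$, merge into a single set $W$ all strata meeting $B(x_s,2R)$, keep the outer strata, and verify covering, uniform boundedness ($W\subseteq B_Y(y_0,2R+D)$), point-finiteness (outer strata avoid the wedge point, which lies only in $W$), and the Lebesgue estimate by splitting into $d_Y(y,y_0)\leq R$ (where $B_Y(y,R)\subseteq B_Y(y_0,2R)\subseteq W$) and $d_Y(y,y_0)>R$ (where the ball stays in one arm and the arm's Lebesgue number applies); normalizing characteristic functions finishes. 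Your check of all four properties is sound, including the key geometric observation that at scale $R$ a ball centered farther than $R$ from $y_0$ meets only one arm. What your approach buys is self-containedness and elementarity: it needs no analogue of Theorem \ref{ExactnessViaPUExtension} for the point-finite-determination property, which the paper's "similar" proof implicitly presupposes but never states; the paper's route, by contrast, is shorter given that machinery and adapts uniformly to the quantitative, $\epsilon$-controlled properties such as exactness, where a purely set-theoretic merging of strata would not suffice.
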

\begin{proof}
Similar to that of \ref{ExactnessIsRegular}.
\end{proof}

\begin{Corollary}
Being large scale paracompact is a regular coarse property.
\end{Corollary}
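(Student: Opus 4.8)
The goal is to show that being large scale paracompact is regular, having already established (in \ref{ExactnessIsRegular} and \ref{WeakParacompactnessIsRegular}) that both exactness and having the coarse structure determined by point-finite partitions of unity are regular coarse properties. The plan is to observe that large scale paracompactness is exactly the conjunction of these two properties and then to argue that a conjunction of regular coarse properties is again regular.

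First I would record the easy equivalence: an $\infty$-pseudo-metric space $X$ is large scale paracompact if and only if $X$ is exact and its coarse structure is determined by point-finite partitions of unity. The forward implication is immediate from the definitions (an $(\epsilon,R)$-continuous point-finite partition of unity whose strata are uniformly bounded witnesses exactness, and forgetting the continuity condition leaves a point-finite partition of unity with uniformly bounded strata of Lebesgue number at least $R$, witnessing the second property). For the converse, given $R,\epsilon>0$, one takes an $(\epsilon/4,R)$-continuous partition of unity $\phi$ from exactness and a point-finite partition of unity $\psi$ with uniformly bounded strata of Lebesgue number at least $R$ from the second property, and forms $f=(1-\epsilon/4)\phi+(\epsilon/4)\psi$ on the disjoint union of index sets, exactly as in the proof of \ref{ConnectionBetweenExactnessAndLSParacompactness}; this $f$ is $(\epsilon,R)$-continuous, point-finite, with uniformly bounded strata forming a cover of Lebesgue number at least $R$. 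Here the strata of $f$ need not literally have $\mathscr{P}$ for any property, so I would actually just quote \ref{ConnectionBetweenExactnessAndLSParacompactness} with $\mathscr{P}$ the property of being bounded, since an exact space trivially has probabilistic decomposition complexity with respect to boundedness.

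Next I would verify the general principle that the conjunction $\mathscr{P}_1\wedge\mathscr{P}_2$ of two regular coarse properties is regular. This is just a matter of checking the two clauses of \ref{RegularCoarseProperty}: if $X$ has both $\mathscr{P}_1$ and $\mathscr{P}_2$ and $\{A_s\}_{s\in S}$ are isometric copies of $X$, then $\bigvee_{s\in S}(A_s,x_s)$ has $\mathscr{P}_1$ by regularity of $\mathscr{P}_1$ and has $\mathscr{P}_2$ by regularity of $\mathscr{P}_2$, hence has both; and likewise for finite wedges of spaces each having both properties. Combining this with the equivalence of the previous paragraph and Propositions \ref{ExactnessIsRegular} and \ref{WeakParacompactnessIsRegular} yields the corollary.

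I do not expect any genuine obstacle here: both ingredients are already in hand, and the only mild subtlety is confirming that large scale paracompactness really does decompose as a conjunction in the naive way rather than requiring a simultaneous construction — and that is handled cleanly by the averaging trick of \ref{ConnectionBetweenExactnessAndLSParacompactness}. One small point worth stating carefully is that the second property (coarse structure determined by point-finite partitions of unity) does not by itself mention any coarse property $\mathscr{P}$, so the conjunction is literally a conjunction of two properties, and no compatibility condition between them is needed.

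\begin{proof}
By \ref{ConnectionBetweenExactnessAndLSParacompactness} applied with $\mathscr{P}$ the property of being bounded (every exact space has probabilistic decomposition complexity with respect to boundedness), an $\infty$-pseudo-metric space $X$ is large scale paracompact if and only if $X$ is exact and the coarse structure of $X$ is determined by point-finite partitions of unity. Thus it suffices to check that the conjunction of two regular coarse properties is regular, and then to invoke \ref{ExactnessIsRegular} and \ref{WeakParacompactnessIsRegular}.

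So let $\mathscr{P}_1$ and $\mathscr{P}_2$ be regular coarse properties and let $\mathscr{P}$ be the property of having both $\mathscr{P}_1$ and $\mathscr{P}_2$. We verify the two conditions of \ref{RegularCoarseProperty}. If an $\infty$-pseudo-metric space $X$ has $\mathscr{P}$ and $\{X_s\}_{s\in S}$ is a family of isometric copies of $X$, then $X$ has $\mathscr{P}_1$, so $\bigvee_{s\in S}(X_s,x_s)$ has $\mathscr{P}_1$ by regularity of $\mathscr{P}_1$; similarly it has $\mathscr{P}_2$; hence it has $\mathscr{P}$, for any choice of basepoints $x_s$. If $\{X_s\}_{s\in S}$ is a finite family with each $X_s$ having $\mathscr{P}$, then each $X_s$ has $\mathscr{P}_1$, so $\bigvee_{s\in S}(X_s,x_s)$ has $\mathscr{P}_1$ by regularity of $\mathscr{P}_1$, and likewise it has $\mathscr{P}_2$; hence it has $\mathscr{P}$. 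Therefore $\mathscr{P}$ is regular.

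Applying this with $\mathscr{P}_1 = $ exactness and $\mathscr{P}_2 = $ having the coarse structure determined by point-finite partitions of unity, and using \ref{ExactnessIsRegular} and \ref{WeakParacompactnessIsRegular}, we conclude that being large scale paracompact is a regular coarse property.
\end{proof}
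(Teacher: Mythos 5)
Your proof is correct and takes essentially the same route as the paper: the paper likewise identifies large scale paracompactness with the conjunction of exactness and having the coarse structure determined by point-finite partitions of unity via \ref{ConnectionBetweenExactnessAndLSParacompactness}, and then invokes \ref{ExactnessIsRegular} and \ref{WeakParacompactnessIsRegular}. You merely spell out two details the paper leaves implicit (instantiating \ref{ConnectionBetweenExactnessAndLSParacompactness} with $\mathscr{P}$ equal to boundedness, and checking that a conjunction of regular coarse properties is regular), which is fine.
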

\begin{proof}
By \ref{ConnectionBetweenExactnessAndLSParacompactness} being large scale paracompact means being exact and having the large scale structure determined by point-finite partitions of unity. Both classes are regular by \ref{ExactnessIsRegular} and \ref{WeakParacompactnessIsRegular},
so being large scale paracompact is a regular coarse property.
\end{proof}

\section{Coarse embeddings in Hilbert spaces}

In this section we prove that coarse embeddability in Hilbert spaces is a regular coarse property. Obviously, in this case we restrict ourselves to pseudo-metric spaces.

The following is a generalization of a result of M.Holloway \cite{Hol}.
\begin{Theorem}
Suppose $X$ is a pseudo-metric space. The following conditions are equivalent:\\
1. $X$ coarsely embeds in $l_1(A)$ for some set $A$,\\
2. There is a constant $c > 0$ such that for all $R, \epsilon > 0$ there is $S > 0$ with the property that for any $T > S$ there is a set $C$ and a function $f$ from $X$ to the unit sphere of $l_1(C)$ so that
$|f(x)-f(y)|\leq \epsilon$ if $d(x,y)\leq R$ and $|f(x)-f(y)|\geq c$ if $T \geq d(x,y)\geq S$. 
\end{Theorem}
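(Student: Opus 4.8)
The plan is to prove the two implications separately, using a standard averaging-over-scales argument in one direction and a direct estimate in the other. For $1)\Rightarrow 2)$, I would start from a coarse embedding $F\colon X\to l_1(A)$ with control functions $\rho_-\le d_{l_1}(F(x),F(y))\le\rho_+(d(x,y))$, where $\rho_-(t)\to\infty$. Fix $R,\epsilon>0$. The idea is to build $f$ with values in the unit sphere of $l_1(C)$ for a suitable $C$ by composing $F$ with a map that ``spreads'' $l_1(A)$ over many scales so that small distances become uniformly small in $l_1$ while distances in a window $[S,T]$ stay bounded below. Concretely, for a large integer $N$ (to be chosen in terms of $R,\epsilon$) set $C=A\times\{1,\dots,N\}$ and let $f(x)$ have, in the block indexed by $j$, the vector obtained from a truncation/rescaling of $F(x)$ at scale depending on $j$, then normalize. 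Averaging over $j$ contracts the Lipschitz-type increments by a factor $\sim 1/N$, giving the $\epsilon$-estimate once $N>\rho_+(R)/\epsilon$ roughly; meanwhile choosing $S$ large (so that $\rho_-(S)$ is large compared to the truncation levels) forces $f(x),f(y)$ to differ in a definite proportion of blocks when $d(x,y)\in[S,T]$, which yields the lower bound with a universal $c$ (say $c=1$, up to adjusting constants). Lemma~\ref{ProjectionOnUnitSphere} is the tool that controls the normalization step.

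For $2)\Rightarrow 1)$, I would apply the hypothesis to a sequence of scales. Pick $\epsilon_n\to 0$ with $\sum\epsilon_n<\infty$; for each $n$, apply $2)$ with $R=R_n$ (an increasing exhaustion of scales), $\epsilon=\epsilon_n$, obtaining $S_n$, and then choose $T_n>\max(S_n,S_{n-1}\text{-data})$ large enough that the windows $[S_n,T_n]$ together cover all sufficiently large distances; this gives $f_n\colon X\to$ unit sphere of $l_1(C_n)$ with $|f_n(x)-f_n(y)|\le\epsilon_n$ when $d(x,y)\le R_n$ and $|f_n(x)-f_n(y)|\ge c$ when $T_n\ge d(x,y)\ge S_n$. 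Fix a basepoint $x_0\in X$ and form $F=\bigoplus_n \big(f_n-f_n(x_0)\big)$ into $l_1\big(\bigsqcup_n C_n\big)$; the summability $\sum\epsilon_n<\infty$ together with the per-level bound $|f_n(x)-f_n(y)|\le 2$ makes $F$ well-defined and bornologous (for fixed $r$, only finitely many levels contribute more than $\epsilon_n$, the rest contribute $\le\sum_{n\ge n_0}\epsilon_n$). For properness, given $d(x,y)$ large, it lies in some window $[S_n,T_n]$, so the $n$-th block alone contributes at least $c$; arranging the windows to march off to infinity (each distance eventually sits in a window of arbitrarily large index, or one simply sums several blocks) yields $|F(x)-F(y)|\to\infty$ as $d(x,y)\to\infty$. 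This embeds $X$ coarsely into $l_1$ of some set, which is condition $1)$.

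The main obstacle is the construction in $1)\Rightarrow 2)$: turning a coarse embedding into the spread-out family $f$ that is simultaneously $(\epsilon,R)$-small on close points and bounded below on an entire window $[S,T]$ with a \emph{scale-independent} constant $c$. The delicate point is that $c$ must not depend on $R,\epsilon$ (only $S$ does), so the normalization must be arranged so that the proportion of blocks witnessing separation is bounded below by an absolute constant; this is where one must be careful about how the truncation levels in the $N$ blocks are distributed relative to $\rho_-(S)$ and $\rho_+(R)$, and where Lemma~\ref{ProjectionOnUnitSphere} is invoked to pass from $l_1$-estimates before normalization to estimates on the unit sphere. The Holloway-type original result presumably handles the case without the uniform-$c$ refinement, so the new content is precisely tracking that this constant can be kept fixed across all scales.
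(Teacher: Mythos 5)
Your argument for $2)\Rightarrow 1)$ follows the same route as the paper's proof (which, note, covers only this implication): apply condition 2) along a sequence of scales $R_n\to\infty$ with $\sum\epsilon_n<\infty$, subtract the values at a basepoint, and sum the blocks; your bornologousness estimate is fine. The genuine gap is in properness. If a large distance $d(x,y)$ merely lies in \emph{some} window $[S_n,T_n]$, the corresponding single block contributes at least $c$, so all you obtain is $|F(x)-F(y)|\ge c$, a fixed constant rather than divergence; and ``a window of arbitrarily large index'' does not help, since every block's contribution is bounded by the same $c$ from below and by $2$ from above (each $f_n$ maps into a unit sphere). What is needed, and what the paper arranges explicitly, is heavy overlap of the windows: after making the $S(i)$ strictly increasing, the paper requires $f_i$ to separate on the whole range $[S(i),S((i+1)^2)]$ (legitimate because $T$ may be taken arbitrarily large once $S$ is fixed), so that any distance in $[S(i^2),S((i+1)^2)]$ is detected by every $f_k$ with $i\le k\le i^2$, giving a lower bound of order $(i^2-i)\cdot c\to\infty$. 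Your parenthetical ``or one simply sums several blocks'' gestures at this, but nothing in your choice of $S_n,T_n$ (``the windows together cover all sufficiently large distances'') guarantees that the number of detecting blocks tends to infinity with the distance; as written, expansion can fail and $F$ need not be a coarse embedding.

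For $1)\Rightarrow 2)$ there is nothing in the paper to compare against: the paper gives no proof of that implication (it is the Holloway-type statement being generalized). Your sketch, however, is also not a proof: the points you yourself flag as delicate --- how the truncation levels in the $N$ blocks are distributed relative to $\rho_-(S)$ and $\rho_+(R)$, and why normalization preserves a lower bound with a constant $c$ independent of $R$ and $\epsilon$ --- are precisely what is left unestablished. Note also that Lemma \ref{ProjectionOnUnitSphere} as stated requires a finite index set of cardinality $2m$ and coordinatewise increments at most $1$, hypotheses your truncated-and-rescaled vectors are not shown to satisfy, so even the normalization step is not yet covered by the cited lemma. In summary: the $2)\Rightarrow 1)$ direction has a concrete, fixable defect (adopt the paper's overlapping-window trick), while the $1)\Rightarrow 2)$ direction remains an unproved plan in your write-up.
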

\begin{proof}
2)$\implies$1). For each $i\ge 0$ put $R_i=i$ and $\epsilon_i=1/2^i$. Find a sequence of natural numbers $S(i)$ for those values and adjust it, if necessary, to be strictly increasing and $S(i+1)-S(i)$ is divergent to infinity. 
Find functions $f_i$ from $X$ to the unit sphere of $l_1(C_i)$ such that
$|f(x)-f(y)|\leq \epsilon_i$ if $d(x,y)\leq R_i$ and $|f(x)-f(y)|\geq c$ if $S((i+1)^2) \geq d(x,y)\geq S(i)$. 
We may assume sets $C_i$ are pairwise disjoint. Pick $x_0\in X$ and define
$f:X\to l_1(\bigcup\limits_{i=0}^\infty C_i)$ by the formula
$$f(x)=\sum\limits_{i=0}^\infty f_i(x)-f_i(x_0).$$
Notice $d(x,y)\leq n$, $n$ a natural number, implies $|f(x)-f(y)|\leq 2n+1$. Therefore $f$ is large scale continuous (or bornologous).
If $S((i+1)^2) \geq d(x,y)\geq S(i^2)$, then $|f_k(x)-f_k(y)|\ge c$ for all $i^2\ge k \ge i$, resulting in
$|f(x)-f(y)|\ge (i^2-i)\cdot c$. Therefore $d(x_n,y_n)\to \infty$ implies $|f(x_n)-f(y_n)|\to \infty$
and $f$ is a coarse embedding.
\end{proof}

\begin{Corollary}
The property of being coarsely embeddable in a Hilbert space is a regular coarse property.
\end{Corollary}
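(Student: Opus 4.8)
The plan is to verify the two defining conditions of a regular coarse property in \ref{RegularCoarseProperty} for the class $\mathscr{P}$ of pseudo-metric spaces that coarsely embed in a Hilbert space. Since a pseudo-metric space coarsely embeds in a Hilbert space precisely when it coarsely embeds in $l_1(A)$ for some set $A$ (equivalently, by the preceding Theorem, when it satisfies condition 2) there), it suffices to produce, for a wedge of spaces each coarsely embeddable in $l_1$, a coarse embedding of that wedge into a single $l_1$-space. The guiding idea is a direct-sum (``block'') construction: since $l_1\!\left(\coprod_s C_s\right)=\bigoplus_s l_1(C_s)$ and a point of a wedge lies in exactly one factor, one places the image of each factor in its own summand. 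It is important here to work with \emph{honest} coarse embeddings into $l_1$ rather than with the unit-sphere reformulation used in the preceding Theorem, and to translate each embedding so that the chosen basepoint goes to $0$: that is what makes the wedge behave harmlessly near its own basepoint, where coordinates coming from different factors must simultaneously be small. (One could equally run the whole construction with Hilbert spaces $l_2(C_s)$ directly, since $\bigoplus_s l_2(C_s)=l_2\!\left(\coprod_s C_s\right)$ is again Hilbert, sidestepping any $l_1$--Hilbert comparison.)

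For condition 1) I would take $X$ coarsely embedded in $l_1(C)$ with nondecreasing control functions $\rho_-\le\rho_+$, $\rho_-\to\infty$, let $A_s$ ($s\in S$) be isometric copies of $X$ and $x_s\in A_s$ arbitrary basepoints. Pre-composing the fixed embedding with the isometry $A_s\cong X$ and subtracting a constant gives coarse embeddings $g_s\colon A_s\to l_1(C)$ with $g_s(x_s)=0$ and with the \emph{same} $\rho_\pm$ for every $s$; this is exactly where the ``isometric copies'' hypothesis delivers uniformity of moduli at no cost. Define $f$ on $Y=\bigvee_s(A_s,x_s)$ with values in $l_1(S\times C)$ by placing $g_s(a)$ in the $s$-th summand when $a$ lies in the $s$-th factor (consistent at the basepoint, where every $g_s$ vanishes). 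For $a,b$ in one factor $s$ one has $\|f(a)-f(b)\|=\|g_s(a)-g_s(b)\|$ with $d_Y(a,b)=d(a,b)$; for $a$ in factor $s$ and $b$ in factor $t\ne s$ one has $\|f(a)-f(b)\|=\|g_s(a)\|+\|g_t(b)\|$ with $d_Y(a,b)=d(x_s,a)+d(x_t,b)$. Combining $\rho_-(d(x_s,a))\le\|g_s(a)\|\le\rho_+(d(x_s,a))$ with $\max(u,v)\ge\tfrac12(u+v)$ gives $\rho_-\!\left(\tfrac12 d_Y(a,b)\right)\le\|f(a)-f(b)\|\le 2\rho_+(d_Y(a,b))$ in all cases, so $f$ is a coarse embedding and $Y$ has $\mathscr{P}$.

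Condition 2) is the same construction for a finite family: each $X_s$ carries its own embedding $g_s\colon X_s\to l_1(C_s)$ with control $\rho^{(s)}_\pm$; translate so $g_s(x_s)=0$, map into $l_1\!\left(\coprod_s C_s\right)$, and observe that the estimates persist with $\rho_-:=\min_s\rho^{(s)}_-$ (a finite minimum of divergent nondecreasing functions, hence divergent) and $\rho_+:=\max_s\rho^{(s)}_+$. The only genuine content is the pair of cross-factor inequalities — the lower one resting on $\max(u,v)\ge\tfrac12(u+v)$ and $d(x_s,a),d(x_t,b)\le d_Y(a,b)$ — together with the uniformity of the control functions; I expect no serious obstacle, since that uniformity is precisely what the ``isometric copy'' and ``finite family'' clauses of \ref{RegularCoarseProperty} hand over for free.
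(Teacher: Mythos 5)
Your proposal is correct, but it takes a genuinely different route from the paper. The paper does not build a coarse embedding of the wedge directly; instead it invokes its unit-sphere characterization of coarse embeddability in $l_1$ (the generalization of Holloway's theorem preceding the corollary, condition 2 there): for each $s$ it chooses maps $f_s$ from $A_s$ to the unit sphere of $l_1(C_s)$ with controls $\epsilon/2$ and $2c$ sending $x_s$ to a Dirac function $\delta_{c_s}$, pastes them to a map of the wedge into $l_1$ of the wedge of the index sets, and checks that the characterization holds for the wedge with parameters $\epsilon$, $c$, $2S$. You instead work with honest coarse embeddings, translate each so the basepoint goes to $0$, and place each factor in its own summand of $l_1\left(\coprod_s C_s\right)$ (or $l_2$, which avoids the $l_1$--Hilbert comparison altogether); the key cross-factor estimates via $\max(u,v)\ge\tfrac12(u+v)$, the additivity of the wedge metric, and the uniformity of the control functions coming from the ``isometric copies'' and ``finite family'' clauses of \ref{RegularCoarseProperty} are exactly right, including the divergence of a finite minimum of divergent nondecreasing lower controls in condition 2. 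What your approach buys is a more elementary, self-contained argument with transparent control functions $\rho_-\left(\tfrac12 t\right)$ and $2\rho_+(t)$, needing at most the standard equivalence of $l_1$- and Hilbert-embeddability; what the paper's approach buys is that it stays entirely within the unit-sphere/$l_1$ formalism it develops for Property A style arguments, so the corollary becomes a near-immediate application of the characterization theorem it just proved, with the wedge structure on the index sets mirroring the wedge of the spaces.
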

\begin{proof}
Coarse embeddability in $l_1(A)$ for some set $A$ is equivalent to coarse embeddability in a Hilbert space (see \cite{NowakYu}). 

Suppose a pseudo-metric space $X$ coarsely embeds in $l_1(A)$ for some set $A$ and $A_s$ is an isometric copy of $X$ for $s\in S$. Pick a constant $c > 0$ such that for all $R, \epsilon > 0$ there is $S > 0$ with the property that for any $T > S$ there is a set $C$ and a function $f$ from $X$ to the unit sphere of $l_1(C)$ so that
$|f(x)-f(y)|\leq \epsilon/2$ if $d(x,y)\leq R$ and $|f(x)-f(y)|\geq 2c$ if $T \geq d(x,y)\geq S$. 

For each $s\in S$ we may find $f_s:A_s\to l_1(C_s)$, $C_s$ being a copy of $C$, with the same properties as $f$
and sending a fixed $x_s\in A_s$ to the Dirac function $\delta_{c_s}$ for some $c_s\in C_s$. Those functions induce $g:\bigvee\limits_{s\in S} (A_s,x_s)\to l_1(\bigvee\limits_{s\in S} (C_s,c_s))$ with the property that there is $S > 0$ so that for any $T > 2S$ there is a set $C$ and a function $f$ from $X$ to the unit sphere of $l_1(C)$ so that
$|g(x)-g(y)|\leq \epsilon$ if $d(x,y)\leq R$ and $|g(x)-g(y)|\geq c$ if $T \geq d(x,y)\geq 2S$. 

The same way we can check the remaining condition in the Definition 
\ref{RegularCoarseProperty} of regular coarse properties.
\end{proof}

\section{Asymptotic Property C}
In this section we generalize Dranishnikov's asymptotic property C (see \cite{Dra1}  and \cite{DZ})
and prove that it leads to regular coarse properties. Also, we generalize the result that spaces with asymptotic property C have Property A of G.Yu.

\begin{Definition}\label{AsymptoticPropertyCDef}
Suppose $\mathscr{P}$ is a regular coarse property and $X$ is an $\infty$-pseudo-metric space.
$X$ has \textbf{asymptotic property C} with respect to $\mathscr{P}$ if for each 
sequence $\{R_i\}_{i\ge 1}$ there is a natural number $n$ and a decomposition
$X=\bigcup\limits_{i=1}^m X_i$ such that each $X_i$ is the union of an $R_i$-disjoint family 
of subsets of $X$ having $\mathscr{P}$ uniformly.
\end{Definition}

\begin{Corollary}
Suppose $\mathscr{P}$ is a regular coarse property.
Asymptotic Property C with respect to $\mathscr{P}$ is a regular coarse property.
\end{Corollary}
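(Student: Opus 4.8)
The plan is to verify directly the two clauses of Definition~\ref{RegularCoarseProperty} for the property ``$X$ has asymptotic property C with respect to $\mathscr{P}$''. The key preliminary observation is that, combining Definition~\ref{DisjointDecomposition} with Definition~\ref{AsymptoticPropertyCDef}, this property says exactly: for every sequence $\{R_i\}_{i\ge 1}$ of reals there is an integer $m>0$, possibly depending on the sequence, so that $X$ admits an $(m,\{R_i\}_{i\ge 1},\mathscr{P})$-decomposition. Thus everything reduces to feeding a common value of $m$ into Lemma~\ref{BasicLemmaAboutRepsilonPDecomposition}, in the same spirit as the earlier corollary on asymptotic dimension with respect to $\mathscr{P}$.

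For clause~2 (a finite family $\{X_s\}_{s\in S}$, each $X_s$ having asymptotic property C with respect to $\mathscr{P}$): I would fix a sequence $\{R_i\}_{i\ge 1}$, pick for each $s$ an $(m_s,\{R_i\}_{i\ge 1},\mathscr{P})$-decomposition of $X_s$, and then note that since the empty set is vacuously the union of the empty (hence $R_i$-disjoint) family satisfying $\mathscr{P}$ uniformly, one may append empty strata and upgrade each $X_s$ to an $(m,\{R_i\}_{i\ge 1},\mathscr{P})$-decomposition with the common value $m=\max_s m_s$. Now case~1 of Lemma~\ref{BasicLemmaAboutRepsilonPDecomposition} (finite index set) yields an $(m,\{R_i\}_{i\ge 1},\mathscr{P})$-decomposition of $\bigvee_{s\in S}(X_s,x_s)$; as $\{R_i\}_{i\ge 1}$ was arbitrary this is exactly asymptotic property C with respect to $\mathscr{P}$, for any choice of basepoints.

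For clause~1 (an arbitrary family of isometric copies $X_s$ of a space $X$ having asymptotic property C with respect to $\mathscr{P}$): fix $\{R_i\}_{i\ge 1}$ and an $(m,\{R_i\}_{i\ge 1},\mathscr{P})$-decomposition of $X$; transporting it through the isometries gives each $X_s$ an $(m,\{R_i\}_{i\ge 1},\mathscr{P})$-decomposition. The device is to apply case~2 of Lemma~\ref{BasicLemmaAboutRepsilonPDecomposition} with ambient space the disjoint union $Z=\bigsqcup_{s\in S}X_s$, in which points of distinct petals are declared to be at distance $\infty$ (a legitimate $\infty$-pseudo-metric space). All $X_s$ are subsets of $Z$, so it suffices to check that $Z$ itself admits an $(m,\{R_i\}_{i\ge 1},\mathscr{P})$-decomposition; the obvious candidate is $Z=\bigcup_{i=1}^m Z^i$ with $Z^i$ the union of the petalwise $i$-th pieces. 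The defining family of $Z^i$ is $R_i$-disjoint because inside each petal it already was and across petals distances are $\infty$; the one point that genuinely uses regularity of $\mathscr{P}$ is that this family satisfies $\mathscr{P}$ uniformly. For that I would observe that its wedge is an iterated wedge $\bigvee_{s\in S}\bigl(\text{wedge of the $i$-th strata of }X_s\bigr)$, which, wedges composing isometrically, is just the wedge over the product index set; since each $X_s$ is isometric to $X$, the inner wedge is an isometric copy of the single space $W$ obtained by wedging the $i$-th strata of $X$, and $W$ has $\mathscr{P}$ because those strata satisfy $\mathscr{P}$ uniformly. Now clause~1 of regularity \emph{for $\mathscr{P}$ itself} gives that the wedge of the isometric copies of $W$ has $\mathscr{P}$ for any basepoints, as needed.

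I expect the only real obstacle to be bookkeeping of two flavours: securing a single $m$ valid over the whole family (handled by padding with empty strata), and, in the isometric-copy case, seeing that ``$\mathscr{P}$ uniformly'' for the petalwise decomposition of the disjoint union collapses, via associativity and isometry-invariance of the wedge construction, to clause~1 of regularity for $\mathscr{P}$. With those two points settled, the conclusion follows from Lemma~\ref{BasicLemmaAboutRepsilonPDecomposition} essentially verbatim.
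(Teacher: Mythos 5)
Your overall reduction to Lemma~\ref{BasicLemmaAboutRepsilonPDecomposition} is indeed the route the paper takes, and your handling of clause~2 of Definition~\ref{RegularCoarseProperty} (fixing the sequence $\{R_i\}_{i\ge 1}$, passing to a common $m$, then invoking case~1 of the Lemma) is fine. The gap is in your clause~1 argument. After passing to the disjoint union $Z$, you must show that, for each $i$, the combined family of all petalwise copies of the $i$-th strata of $X$ satisfies $\mathscr{P}$ \emph{uniformly}, i.e.\ that its wedge has $\mathscr{P}$ for an \emph{arbitrary} choice of basepoints. Writing that wedge, as you do, as an iterated wedge $\bigvee_{s\in S}(W_s,w_s)$ with $W_s$ the wedge of the $i$-th strata inside the petal $X_s$, the basepoints chosen in different petals need not correspond to one another under the isometries $X_s\cong X$. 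So the $W_s$ are wedges of the same family of strata of $X$ but taken at basepoints that vary with $s$, and such wedges need not be mutually isometric (already a wedge of two intervals at endpoints is not isometric to a wedge of the same intervals at midpoints). Hence the $W_s$ are not ``isometric copies of the single space $W$,'' and clause~1 of regularity of $\mathscr{P}$ does not apply to the outer wedge; all you actually know is that each $W_s$ separately has $\mathscr{P}$, which for infinite $S$ yields nothing. In effect you are trying to deduce, from the two regularity axioms alone, that a family having $\mathscr{P}$ uniformly stays uniform after being repeated over an arbitrary index set with independently chosen basepoints; that is not a formal consequence of Definition~\ref{RegularCoarseProperty} (the paper only proves a statement of this flavour under the extra hypothesis that $\mathscr{P}$ is hereditary), and it is exactly the kind of difficulty that case~2 of the Lemma is meant to absorb, not to be re-derived.

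The repair also explains why the paper can say the corollary ``follows directly'': in clause~1 the petals are isometric copies of one space $X$, so $\bigvee_{s\in S}(X_s,x_s)$ is isometric to $\bigvee_{s\in S}(X,y_s)$, where $y_s\in X$ is the image of $x_s$ under the inverse isometry. Then every petal is literally a subset of the ambient space $X$, which (for the given $\{R_i\}_{i\ge 1}$) has an $(m,\{R_i\}_{i\ge 1},\mathscr{P})$-decomposition, and case~2 of Lemma~\ref{BasicLemmaAboutRepsilonPDecomposition} applies verbatim. No auxiliary space $Z$, and no new uniformity claim about the combined petalwise family, is needed; your clause~2 argument can stand as written.
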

\begin{proof}
It follows directly from \ref{BasicLemmaAboutRepsilonPDecomposition}
\end{proof}

\begin{Theorem}
Suppose $\mathscr{P}$ is a regular coarse property and $X$ is an $\infty$-pseudo-metric space.
If $X$ has asymptotic property C with respect to $\mathscr{P}$, then $X$ has probabilistic decomposition 
complexity with respect to $\mathscr{P}$.
\end{Theorem}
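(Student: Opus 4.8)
The plan is to reduce to Theorem~\ref{FundThmOnRootsOfPUs} by producing, for given $R,\epsilon>0$, a finite tree of partitions of unity on $X$ whose vertex at depth $n$ is $(\epsilon_n,R)$-continuous for a summable sequence with $\sum_n\epsilon_n<\epsilon$ and whose leaves have $\mathscr P$ uniformly; the induced partition of unity then witnesses probabilistic decomposition complexity with respect to $\mathscr P$. First I would fix $R,\epsilon>0$, pick positive reals $\delta_1,\delta_2,\dots$ with $\sum_{k\ge1}\delta_k<\epsilon$, and set $\rho_k=\sum_{i=1}^{k}\frac{8R}{\delta_i}$ (so $\rho_0=0$ and $\rho_k-\rho_{k-1}=\frac{8R}{\delta_k}$) and $R_k=2\rho_k+1$. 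Crucially these are chosen \emph{before} the hypothesis is invoked; feeding the sequence $\{R_k\}_{k\ge1}$ into asymptotic property C with respect to $\mathscr P$ yields an integer $m$ and a decomposition $X=X_1\cup\dots\cup X_m$ in which each $X_k$ is the union of an $R_k$-disjoint family $\mathcal U_k$ having $\mathscr P$ uniformly.

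Then I would build the tree. Put $Y_0=X$ and $Y_k=X\setminus(X_1\cup\dots\cup X_k)$; since $\bigcup_kX_k=X$ one has $Y_m=\emptyset$ and $Y_{k-1}=Y_k\cup(X_k\cap Y_{k-1})$. By induction I would produce nested subsets $W_0=X\supseteq W_1\supseteq\dots\supseteq W_m=\emptyset$ with $W_{k-1}\subseteq B(Y_{k-1},\rho_{k-1})$ and a partition of unity $\phi^{(k)}$ on $W_{k-1}$, taken to be the partition of unity $\phi_R^{\mathcal V_k}$ of Corollary~\ref{ExistenceOfREpsilonPUs} attached to the cover $\mathcal V_k$ of $W_{k-1}$ by $W_{k-1}\cap B(Y_k,\rho_k)$ and the sets $W_{k-1}\cap B(U,\rho_k)$, $U\in\mathcal U_k$. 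This cover has multiplicity at most $2$, since $R_k>2\rho_k$ forces the $B(U,\rho_k)$ to be pairwise disjoint; and, using $W_{k-1}\subseteq B(Y_{k-1},\rho_{k-1})$ together with $Y_{k-1}\subseteq Y_k\cup\bigcup\mathcal U_k$, every point of $W_{k-1}$ is within $\rho_{k-1}$ of $Y_k$ or of some $U\in\mathcal U_k$, so the cover has Lebesgue number at least $\rho_k-\rho_{k-1}=\frac{8R}{\delta_k}$ in $W_{k-1}$; hence $\phi^{(k)}$ is $(\delta_k,R)$-continuous. I would let $W_k$ be the stratum of $\phi^{(k)}$ over $Y_k$ (so $W_k\subseteq W_{k-1}\cap B(Y_k,\rho_k)$, and $W_m=\emptyset$ because $B(Y_m,\rho_m)=\emptyset$), and make each remaining stratum $G_{k,U}\subseteq W_{k-1}\cap B(U,\rho_k)$ a leaf carrying the trivial partition of unity. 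The result is a tree of partitions of unity $\mathcal T$ of height $m$ with $\phi^{(k)}$ at depth $k-1$; every vertex at depth $n$ is $(\delta_{n+1},R)$-continuous (the trivial leaf partitions being $(0,R)$-continuous), and $\mathcal T|\{x\}$ is a probability tree for each $x$.

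Theorem~\ref{FundThmOnRootsOfPUs} would then give that the induced partition of unity $\phi(\mathcal T)$ is $(\sum_{k\ge1}\delta_k,R)$-continuous, hence $(\epsilon,R)$-continuous. Its strata are the leaves of $\mathcal T$, namely the sets $G_{k,U}$ ($1\le k\le m$, $U\in\mathcal U_k$), and $G_{k,U}\subseteq B(U,\rho_k)$. Since $\{B(U,\rho_k)\}_{U\in\mathcal U_k}$ has $\mathscr P$ uniformly by Proposition~\ref{BallEnlargementProp}, passing to the subfamilies $\{G_{k,U}\}_U$ and taking the union over the finitely many $k$, regularity of $\mathscr P$ yields that the strata of $\phi(\mathcal T)$ have $\mathscr P$ uniformly. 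Thus $X$ has probabilistic decomposition complexity with respect to $\mathscr P$.

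The hard part is that the number $m$ of pieces returned by asymptotic property C is not controlled by the input sequence $\{R_k\}$, so a direct one-step argument --- ball-enlarging the whole decomposition into a cover of bounded multiplicity and invoking Corollary~\ref{ExistenceOfREpsilonPUs} --- would need Lebesgue number on the order of $4mR/\epsilon$, with $m$ depending circularly on the scales one picks. The iterated (tree) construction defeats this by spending a fixed summable budget $\{\delta_k\}$, each $\delta_k$ paying for just one multiplicity-$2$ step, so that the accumulated continuity error $\sum_{k\le m}\delta_k$ stays below $\epsilon$ no matter how large $m$ is. A secondary point requiring care is that the leaf strata $G_{k,U}$ are only subsets of the $\mathscr P$-uniform ball-neighborhoods $B(U,\rho_k)$, so the argument leans on $\mathscr P$ being inherited by subspaces (as it is for exactness, coarse embeddability in Hilbert spaces, and the other properties in play) in addition to its regularity.
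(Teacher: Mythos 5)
Your proposal is correct and is essentially the paper's own argument unpacked: the paper simply cites Theorems \ref{ConvertDecompositionTreeToPUTree} and \ref{LongTreesOfPUsToShortTrees}, and your construction (cumulative ball enlargements $\rho_k$, multiplicity-two covers obtained by peeling off one piece $X_k$ at a time, Corollary \ref{ExistenceOfREpsilonPUs} at each level, and Theorem \ref{FundThmOnRootsOfPUs} to collapse the finite tree) reproduces exactly the content of those two results specialized to an asymptotic property C decomposition, i.e.\ to a decomposition tree with $n_i=2$. The caveat you flag --- that the leaf strata are only subsets of the $\mathscr{P}$-uniform family $\{B(U,\rho_k)\}_{U\in\mathcal{U}_k}$, so heredity of $\mathscr{P}$ is used beyond regularity --- is not a defect relative to the paper: the strata produced in the proof of Theorem \ref{ConvertDecompositionTreeToPUTree} are likewise only subsets of the enlarged leaves, so the same implicit appeal to heredity occurs there as well.
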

\begin{proof}
Apply \ref{LongTreesOfPUsToShortTrees} and \ref{ConvertDecompositionTreeToPUTree}.
\end{proof}

In view of \ref{RDisjointObservation} the definition \ref{AsymptoticPropertyCDef}
can be generalized as follows:

\begin{Definition}\label{BasicDecompositionComplexityDef}
Suppose $\mathscr{P}$ is a regular coarse property and $X$ is an $\infty$-pseudo-metric space.
$X$ has \textbf{basic decomposition complexity} with respect to $\mathscr{P}$ if for each 
 $R > 0$ there is a natural number $n$ and a decomposition
$X=\bigcup\limits_{i=1}^m X_i$ such that each $X_i$ is the union of an $R$-disjoint family 
of subsets of $X$ having $\mathscr{P}$ uniformly.
\end{Definition}

In the case of  $\mathscr{P}$ meaning being bounded \ref{BasicDecompositionComplexityDef}
is a generalization of bounded geometry.

\begin{Proposition}
Suppose $X$ is an $\infty$-pseudo-metric space.
If $X$ has bounded geometry, then  for each 
 $R > 0$ there is a natural number $n$ and a decomposition
$X=\bigcup\limits_{i=1}^m X_i$ such that each $X_i$ is the union of an $R$-disjoint uniformly bounded family 
of subsets of $X$.
\end{Proposition}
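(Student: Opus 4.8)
The plan is to reduce the statement to an elementary graph--colouring fact. Fix $R>0$. Bounded geometry of $X$ furnishes a natural number $N=N(R)$ such that every ball $B(x,R)=\{y\in X:d(x,y)\le R\}$ contains at most $N$ points; the $\infty$--pseudo--metric relaxation causes no trouble here, since points at infinite distance from $x$ simply do not lie in $B(x,R)$. Form the graph $\Gamma$ with vertex set $X$ in which $x$ and $y$ are adjacent precisely when $x\ne y$ and $d(x,y)\le R$. The neighbours of a vertex $x$ all lie in $B(x,R)\setminus\{x\}$, so $\Gamma$ has maximum degree at most $N-1$.

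Next I would use the standard fact that a graph of maximum degree at most $N-1$ admits a proper colouring with $N$ colours, even when it is infinite: well--order $X$ and colour its points one at a time, always assigning to the current point the least colour in $\{1,\dots,N\}$ not yet used on one of its (already coloured) neighbours, which is possible because each point has fewer than $N$ neighbours; equivalently, one may invoke the De Bruijn--Erd\H{o}s theorem together with the greedy algorithm for finite graphs. Let $X_1,\dots,X_N$ be the resulting colour classes. Then $X=\bigcup_{i=1}^{N}X_i$ and any two distinct points of a fixed $X_i$ are non--adjacent in $\Gamma$, hence are at distance $>R$, in particular at distance $\ge R$.

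Finally, for each $i$ write $X_i=\bigcup_{x\in X_i}\{x\}$. By the previous paragraph the family $\{\{x\}:x\in X_i\}$ is $R$--disjoint, and it is uniformly bounded since every singleton has diameter $0$. Setting $n=m=N(R)$ (some colour classes may be empty, which is harmless) completes the proof. The only genuinely delicate points are the colouring of the possibly infinite graph $\Gamma$ and pinning down the precise notion of bounded geometry in force; everything else is bookkeeping. If one adopts the weaker form of bounded geometry in which only a maximal $\epsilon$--separated net $Y\subset X$ is required to have uniformly finite balls, the same argument colours $Y$ after enlarging the radius used to define $\Gamma$, and one then transports the colouring to $X$ by assigning to each point the colour of a chosen nearest net point, which yields colour classes that are $R$--disjoint unions of sets of uniformly bounded diameter.
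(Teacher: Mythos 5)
Your argument is correct, but it takes a different route from the paper. The paper's proof is the classical net-extraction argument: it chooses $X_1$ to be a maximal $R$-separated subset of $X$, then inductively $X_i$ a maximal $R$-separated subset of the complement of the previously chosen sets; maximality forces any point surviving $n$ steps to have, in each earlier $X_i$, a point within distance $R$, so the ball $B(x,2R)$ would contain at least $n+1$ points, and bounded geometry (balls of radius $2R$ of cardinality less than $n$) stops the process after at most $n$ stages. Each $X_i$ is then $R$-separated, hence an $R$-disjoint union of singletons, exactly as in your colour classes. Your proof instead colours the graph on $X$ with edges $0<d(x,y)\le R$ by a transfinite greedy (or De Bruijn--Erd\H{o}s) argument using the degree bound $N(R)-1$; this is a genuinely different mechanism, though both arguments rely on the axiom of choice in the infinite setting (yours via a well-order, the paper's via maximal separated subsets) and both reduce the conclusion to the trivial observation that singletons form a uniformly bounded family. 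Your version gives the marginally better count $N(R)$ of pieces versus the paper's bound in terms of $2R$-balls, and your closing remark handling the weaker, net-based notion of bounded geometry is a genuine strengthening not present in the paper, since there the pieces of the decomposition are honest uniformly bounded sets rather than singletons. The only points to state carefully are the ones you already flag: the precise definition of bounded geometry in force, and the legitimacy of the transfinite greedy colouring, which is fine because every vertex has fewer than $N(R)$ neighbours altogether.
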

\begin{proof}
Let $X_1$ be a subset of $X$ that is maximal with respect to the following property:
$d(x,y) > R$ whenever $x\ne y\in X_1$. By induction, define $X_i$, $i > 2$, as a subset
subset of $X\setminus X_{i-1}$ that is maximal with respect to the following property:
$d(x,y) > R$ whenever $x\ne y\in X_i$.
Let $n$ be a number such that any ball $B(x,2R)$ has less than $n$ points.
Notice $X_{n+1}=\emptyset$. Indeed, if $x\in X_{n+1}$, then
each set $X_i$, $i \leq n$, contains at least one point from $B(x,2R)$, a contradiction.
\end{proof}

\section{Stability of coarse properties}

Suppose $\mathscr{P}$ is a regular coarse property.
There are two kinds of stability for $\mathscr{P}$:\\
1. The class of $\infty$-pseudo-metric spaces
 that have probabilistic decomposition complexity with respect to $\mathscr{P}$ coincides with the class of  $\infty$-pseudo-metric spaces that have property $\mathscr{P}$.\\
2. Every $\infty$-pseudo-metric space $X$ must have $\mathscr{P}$ once all its bounded subspaces
have $\mathscr{P}$ uniformly.

Notice that $\mathscr{P}$  being stable in the sense of 1) implies union permanence for $\mathscr{P}$ (see \ref{UnionPermanence}). 

\begin{Proposition}
Let $X$ be an $\infty$-pseudo-metric space. If $X$ has probabilistic decomposition complexity with respect to exact spaces, then $X$ is exact.
\end{Proposition}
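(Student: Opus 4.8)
The plan is to chain the two incarnations of partitions of unity appearing in the hypothesis and in the conclusion. Probabilistic decomposition complexity with respect to exactness provides, at any prescribed scale $R$ and tolerance $\epsilon$, a partition of unity on $X$ whose strata form a \emph{uniformly} exact family; since exactness is a regular coarse property (Proposition~\ref{ExactnessIsRegular}), uniform exactness of that family means precisely that the wedge of the strata is exact, and exactness of the wedge supplies genuinely bounded refinements inside each stratum. Lemma~\ref{FundLemmaOnRootedTreesOfPUs} then assembles these refinements into a single $(\epsilon,R)$-continuous partition of unity on $X$ with uniformly bounded strata, which is exactly what exactness of $X$ demands.

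Concretely, fix $R,\epsilon>0$. First I would apply probabilistic decomposition complexity with respect to exactness with tolerance $\epsilon/2$ at scale $R$, obtaining an $(\epsilon/2,R)$-continuous partition of unity $\phi=\{\phi_s\}_{s\in S}$ on $X$ whose strata $X_s=\phi_s^{-1}(0,1]$ are uniformly exact; by definition this says the wedge $W=\bigvee_{s\in S}(X_s,x_s)$ is exact for a fixed choice of basepoints $x_s\in X_s$. Next, applying exactness of $W$ with tolerance $\epsilon/2$ at scale $R$, I get an $(\epsilon/2,R)$-continuous partition of unity $\psi=\{\psi_t\}_{t\in T}$ on $W$ all of whose strata have diameter at most one constant $D$. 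For each $s$ the inclusion $\iota_s\colon X_s\hookrightarrow W$ onto the $s$-th summand is an isometric embedding, because on that summand the wedge metric collapses to $d_s$; hence $f_s:=\psi\circ\iota_s$, reindexed by a disjoint copy $C_s=T\times\{s\}$ of $T$, is an $(\epsilon/2,R)$-continuous partition of unity on $X_s=\phi_s^{-1}(0,1]$ each of whose strata has diameter at most $D$.

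Finally, Lemma~\ref{FundLemmaOnRootedTreesOfPUs} with $\epsilon_0=\epsilon_1=\epsilon/2$ yields the $(\epsilon,R)$-continuous map $f(x)=\sum_{s\in S}\phi_s(x)\cdot f_s(x)$ into $\Delta\bigl(\bigsqcup_{s\in S}C_s\bigr)$, i.e.\ a partition of unity on $X$. Its stratum indexed by a pair $(t,s)$ is $\{x\in X_s:\psi_t(\iota_s(x))>0\}$, which $\iota_s$ carries isometrically into the $t$-stratum of $\psi$, so it has diameter at most $D$. Thus $f$ is an $(\epsilon,R)$-continuous partition of unity on $X$ with strata of diameter at most $D$; since $R,\epsilon$ were arbitrary, $X$ is exact.

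The step where the hypothesis genuinely does work — and the one I would treat most carefully — is the appeal to \emph{uniform} exactness of $\{X_s\}_{s\in S}$: it is this, via exactness of the wedge $W$, that produces a single diameter bound $D$ serving all the auxiliary partitions of unity on the $X_s$ simultaneously, whereas mere exactness of each individual $X_s$ would only give bounds $D_s$ with no control over $\sup_s D_s$. A minor technical point to dispatch is that a point of $X$ may lie in several strata $X_s$ and hence correspond to several distinct points of $W$; this is harmless, since Lemma~\ref{FundLemmaOnRootedTreesOfPUs} asks only that each $f_s$ be $(\epsilon/2,R)$-continuous on its own domain $\phi_s^{-1}(0,1]$, and $\iota_s$ is isometric there.
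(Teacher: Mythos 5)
Your proposal is correct and follows essentially the same route as the paper: obtain an $(\epsilon,R)$-continuous partition of unity whose strata are uniformly exact, use that uniformity to get, on each stratum, a second $(\epsilon,R)$-continuous partition of unity with one common diameter bound, and merge the two layers with Lemma~\ref{FundLemmaOnRootedTreesOfPUs}. Your extra care in extracting the common bound $D$ from exactness of the wedge (via the isometric inclusions of the summands) and in splitting the tolerance as $\epsilon/2+\epsilon/2$ only makes explicit what the paper's terse proof leaves implicit.
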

\begin{proof}
Apply \ref{FundLemmaOnRootedTreesOfPUs} as follows:
Given $R, \epsilon > 0$, choose an $(\epsilon,R)$-continuous partition of unity $\phi$ whose strata are exact uniformly.
Then, on each stratum, choose an $(\epsilon,R)$-continuous partition of unity whose strata are uniformly bounded by some $M > 0$
that is the same for all strata of $\phi$.
\end{proof}

\begin{Theorem}
Let $X$ be an $\infty$-pseudo-metric space. If all bounded subsets of $X$ are exact uniformly, then $X$ is exact.
\end{Theorem}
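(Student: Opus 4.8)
The plan is to deduce this from the Proposition just proved, namely that probabilistic decomposition complexity with respect to exact spaces implies exactness. Thus it is enough to produce, for each $R,\epsilon>0$, an $(\epsilon,R)$-continuous partition of unity on $X$ whose strata form a family that is exact uniformly. Since the hypothesis says precisely that the class of \emph{all} bounded subsets of $X$ satisfies exactness uniformly, it suffices to arrange that every stratum be a bounded subset of $X$; note that the strata cannot be made uniformly bounded without $X$ already being exact, so the gain is in allowing the bounds to grow.

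I would first pass to the coarsely connected case: the relation $d(x,y)<\infty$ is an equivalence relation by axiom~3 of an $\infty$-pseudo-metric space, and since any two points at distance at most $R$ lie in the same class $X_\alpha$, a partition of unity built separately on each $X_\alpha$ will be $(\epsilon,R)$-continuous on $X$ as soon as each of its pieces is. Fixing $R,\epsilon>0$, basepoints $x_\alpha\in X_\alpha$, $\lambda_0=\tfrac{8R}{\epsilon}$ and $M=4\lambda_0$, I would use the "annulus" cover
$$V_{\alpha,k}=\{x\in X_\alpha:\ kM-\lambda_0<d(x,x_\alpha)<(k+1)M+\lambda_0\}\qquad(k\ge 0),$$
with the lower bound dropped when $k=0$. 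These cover $X$; each $V_{\alpha,k}$ is bounded (contained in $B(x_\alpha,(k+1)M+\lambda_0)$); the multiplicity is at most $2$ everywhere, since $M>2\lambda_0$ gives $V_{\alpha,k}\cap V_{\alpha,k+2}=\emptyset$ and sets from different components are disjoint; and the Lebesgue number is at least $\lambda_0=\tfrac{4\cdot 2\cdot R}{\epsilon}$, because the $\lambda_0$-ball about a point at distance $t\in[kM,(k+1)M]$ from $x_\alpha$ sits inside $V_{\alpha,k}$. By Corollary~\ref{ExistenceOfREpsilonPUs} the associated partition of unity $\phi_R^{\mathcal V}$ is $(\epsilon,R)$-continuous, and each of its strata is contained in some $V_{\alpha,k}$, hence bounded. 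The family of strata of $\phi_R^{\mathcal V}$ is therefore a family of bounded subsets of $X$, which is exact uniformly by hypothesis; so $X$ has probabilistic decomposition complexity with respect to exact spaces, and the cited Proposition finishes the proof.

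There is no genuinely hard step here. The only point demanding care is the behaviour of the index $i_R(x,V)$ when the metric takes the value $\infty$ (it may be infinite), but that case is already absorbed into the statement of Corollary~\ref{ExistenceOfREpsilonPUs}; one just has to make sure the multiplicity and Lebesgue-number estimates are uniform over all coarse components simultaneously, which is immediate because the annuli belonging to different components do not meet. All the substance is in the annulus construction together with the previously established reduction to probabilistic decomposition complexity.
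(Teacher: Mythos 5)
Your argument is correct, but it follows a genuinely different route from the paper's. The paper proves this theorem via the extension characterization of exactness (Theorem \ref{ExactnessViaPUExtension}): the uniformity hypothesis yields functions $\alpha$ and $M$ valid for all bounded subsets at once, one chooses $(\alpha(\delta),\alpha(\delta))$-Lipschitz cobounded maps on the even-indexed annuli around a basepoint, extends them over the odd annuli with controlled constants, and pastes to obtain a single $(\delta,\delta)$-Lipschitz cobounded map to $\Delta(S)$. You instead avoid the extension theorem entirely: you build an overlapping annulus cover of multiplicity $2$, bounded members and Lebesgue number $\tfrac{8R}{\epsilon}$, feed it into Corollary \ref{ExistenceOfREpsilonPUs} to get an $(\epsilon,R)$-continuous partition of unity with bounded strata, conclude that $X$ has probabilistic decomposition complexity with respect to exactness (the uniformity hypothesis being exactly what makes the family of strata exact uniformly), and then invoke the immediately preceding Proposition, which hides the remaining use of uniformity (the common bound $M$ on the second-level strata, via the wedge) and the composition step (Lemma \ref{FundLemmaOnRootedTreesOfPUs}). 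What each approach buys: yours is more elementary and more in the spirit of the decomposition-complexity machinery of the paper, and your explicit reduction to coarse components is a genuine improvement -- the paper's annuli around a single $x_0$ only cover the coarse component of $x_0$, so that reduction is implicitly needed there as well; the paper's route keeps explicit Lipschitz and coboundedness control and ties the statement to the extension-of-partitions-of-unity characterization, which is what makes the two notions of stability comparable in that section. One cosmetic point: define the annuli with non-strict inequalities (or take $\lambda_0$ slightly larger than $\tfrac{8R}{\epsilon}$), since with strict inequalities a closed $\lambda_0$-ball about a point whose distance to $x_\alpha$ is exactly $kM$ or $(k+1)M$ may touch the excluded boundary; with closed annuli the multiplicity estimate still holds because $M>2\lambda_0$ strictly.
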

\begin{proof}
By applying \ref{ExactnessViaPUExtension} there exist functions $\alpha: (0,\infty)\to (0,\infty)$, $M: (0,\infty)\times (0,\infty)\to (0,\infty)$ that work for all bounded subsets of $X$ simultanously.

Given $\delta > 0$ pick $x_0\in X$, consider annuli $A_n$, $n\ge 1$, defined as $\{x\in X | (n-1)\cdot R\leq d(x,x_0)\leq n\cdot R\}$ for $R$ sufficiently large. For each $n\ge 0$, choose
$f_n:A_n\to \Delta(S_n)$
that is $(\alpha(\delta),\alpha(\delta))$-Lipschitz and $K$-cobounded for some $K$ uniform to all $n$, and $S_n\subset S$ are mutually disjoint. Pasting all $n$ with $n$ even
gives a function $f$ to $\Delta(S)$ that is $(\alpha(\delta),\alpha(\delta))$-Lipschitz and $K$-cobounded.
Then, for each $n\ge 1$, we extend $f$ restricted to $A_{2n-2}\cup A_{2n}$ over $A_{2n-2}\cup A_{2n-1}\cup A_{2n}$
so that the extension $g_n$ is $(\delta,\delta)$-Lipschitz and $M(\delta,K)$-cobounded. Finally, pasting all $g_n$'s gives
$g:X\to \Delta(S)$ that is $(\delta,\delta)$-Lipschitz $2M(\delta,K)$-cobounded.
\end{proof}

\begin{Corollary}
Suppose $\mathscr{P}$ is a regular coarse property.
If $\mathscr{P}$ implies exactness, then being stable in sense 1) is equivalent to being stable in sense 2).
\end{Corollary}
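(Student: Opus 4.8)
The plan is to prove the two implications separately, in each case feeding one of the two results proved just above into the argument, and in each case using the elementary remark that, since $\mathscr{P}$ implies exactness, any family $\{X_s\}_{s\in S}$ that has $\mathscr{P}$ uniformly is automatically exact uniformly: indeed $\bigvee_{s\in S}(X_s,x_s)$ then has $\mathscr{P}$, hence is exact, for every choice of basepoints.

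For the implication from stability in sense 1) to stability in sense 2), I would start with a space $X$ all of whose bounded subspaces have $\mathscr{P}$ uniformly. By the remark these bounded subspaces are exact uniformly, so the Theorem above (all bounded subsets exact uniformly $\Rightarrow$ $X$ exact) shows $X$ is exact. Given $R,\epsilon>0$, exactness of $X$ yields an $(\epsilon,R)$-continuous partition of unity on $X$ whose strata form a uniformly bounded family; these strata are bounded subsets of $X$, hence have $\mathscr{P}$ uniformly by hypothesis, so $X$ has probabilistic decomposition complexity with respect to $\mathscr{P}$. Stability in sense 1) then forces $X$ to have $\mathscr{P}$, which is exactly stability in sense 2).

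For the converse, the containment of the $\mathscr{P}$-class in the class with probabilistic decomposition complexity with respect to $\mathscr{P}$ is trivial (the trivial partition of unity, whose only stratum is the whole space, is $(\epsilon,R)$-continuous for all $R,\epsilon$). So, assuming stability in sense 2), let $X$ have probabilistic decomposition complexity with respect to $\mathscr{P}$; by the remark it has the same with respect to exactness, hence is exact by the Proposition above (probabilistic decomposition complexity with respect to exact spaces $\Rightarrow$ exact). To invoke stability in sense 2) I must show every family $\{B_t\}_{t\in T}$ of bounded subsets of $X$ has $\mathscr{P}$ uniformly. I would do this by restricting the $(\epsilon,R)$-continuous partitions of unity witnessing probabilistic decomposition complexity of $X$ to the wedge $\bigvee_{t}(B_t,b_t)$ --- which embeds isometrically into a wedge of the strata of those partitions of unity, so the restricted strata still have $\mathscr{P}$ uniformly --- concluding that $\bigvee_{t}(B_t,b_t)$ itself has probabilistic decomposition complexity with respect to $\mathscr{P}$; being a wedge of bounded spaces, the Proposition makes it exact, \ref{FundLemmaOnRootedTreesOfPUs} lets one shrink its strata to uniformly bounded ones, and stability in sense 2) applied to this wedge returns $\mathscr{P}$.

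The step I expect to be the main obstacle is that last one in the converse direction: transferring probabilistic decomposition complexity from $X$ to wedges of bounded subsets of $X$ and then extracting $\mathscr{P}$ for them. The delicate point is that $\mathscr{P}$ is not assumed hereditary, so one has to verify carefully that restricting a partition of unity to a subset preserves the property that its strata have $\mathscr{P}$ uniformly; this is precisely where the hypothesis that $\mathscr{P}$ implies exactness does real work, since it is what allows the relevant (bounded) wedges to be cut down to uniformly bounded strata and the argument to close. Everything else is routine bookkeeping with the definitions and the two preceding results.
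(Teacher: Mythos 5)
Your first implication (stability in sense 1) implies stability in sense 2)) is correct, and it is the route the paper evidently intends for this Corollary, which is stated without an explicit proof: since $\mathscr{P}$ implies exactness, a family of bounded subspaces having $\mathscr{P}$ uniformly is exact uniformly (its wedges have $\mathscr{P}$, hence are exact), so the preceding Theorem makes $X$ exact; the uniformly bounded strata of the resulting $(\epsilon,R)$-continuous partitions of unity are again bounded subspaces of $X$, hence have $\mathscr{P}$ uniformly, so $X$ has probabilistic decomposition complexity with respect to $\mathscr{P}$, and sense 1) yields $\mathscr{P}$. The trivial containment in sense 1) via the trivial partition of unity is also fine.

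The converse direction, however, contains a genuine gap, exactly at the step you flag, and as written it does not close. To invoke stability in sense 2) -- whether for $X$ itself or for a wedge $\bigvee_{t}(B_t,b_t)$ -- you must first know that every family of bounded subsets of the space in question has $\mathscr{P}$ uniformly, and nothing in your argument produces this from probabilistic decomposition complexity of $X$ with respect to $\mathscr{P}$. Concretely: (i) a bounded subset $B_t$ of $X$ need not lie in any single stratum of an $(\epsilon,R)$-continuous partition of unity witnessing the decomposition complexity, so the claimed isometric embedding of $\bigvee_{t}(B_t,b_t)$ into a wedge of strata fails in general; (ii) even when $B_t$ does lie in a stratum, restricting a partition of unity replaces strata by subsets of sets having $\mathscr{P}$, and since $\mathscr{P}$ is not assumed hereditary this does not give ``$\mathscr{P}$ uniformly''; the hypothesis that $\mathscr{P}$ implies exactness only converts $\mathscr{P}$-data into exactness data (via the Proposition and \ref{FundLemmaOnRootedTreesOfPUs}) and can never return $\mathscr{P}$ for the sets $B_t$; (iii) the final appeal to sense 2) ``applied to this wedge'' is circular, because its hypothesis concerns the bounded subsets of $\bigvee_{t}(B_t,b_t)$, which are built from the very sets $B_t$ whose uniform $\mathscr{P}$ you are trying to establish. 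What your argument actually delivers is only that $X$ and the wedges are exact, which is weaker than the conclusion of sense 1); closing this direction requires a genuinely different mechanism for extracting $\mathscr{P}$ for families of bounded subsets from the decomposition-complexity hypothesis, not just the two preceding results plus the implication $\mathscr{P}\Rightarrow$ exactness.
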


\begin{Proposition}
Suppose $\mathscr{P}$ is a regular coarse property and $X$ is an $\infty$-pseudo-metric space.
If all bounded subsets of $X$ have countable asymptotic dimension with respect to $\mathscr{P}$
uniformly, then $X$ has countable asymptotic dimension with respect to $\mathscr{P}$.
\end{Proposition}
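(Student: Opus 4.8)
The plan is to mimic the structure of the proof that "all bounded subsets exact uniformly $\Rightarrow$ exact," but now carrying a decomposition tree through the annular decomposition instead of a single cobounded Lipschitz map. Since all bounded subsets of $X$ have countable asymptotic dimension with respect to $\mathscr P$ uniformly, there is a single sequence $\{n_i\}_{i\ge 1}$ of integers that works for all bounded subsets simultaneously; this is the analogue of extracting $\alpha$ and $M$ uniformly, and it is the first thing to fix. I would then use this same sequence $\{n_i\}$ to produce a decomposition tree for all of $X$: given an arbitrary sequence $\{R_i\}_{i\ge 1}$, I must exhibit an $\{(R_i,n_i)\}_{i\ge 1}$-decomposition tree on $X$ whose leaves satisfy $\mathscr P$ uniformly.

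First I would fix a basepoint $x_0\in X$ and, for $R$ large (to be chosen relative to the $R_i$'s), consider the annuli $A_n=\{x\in X\mid (n-1)R\le d(x,x_0)\le nR\}$, noting that non-consecutive annuli are $R$-disjoint. Each $A_n$ is a bounded subset, hence has an $\{(R_i,n_i)\}_{i\ge 1}$-decomposition tree $\mathcal T_n$ with leaves satisfying $\mathscr P$ uniformly. Next I would amalgamate: let $X^{\mathrm{even}}=\bigcup_{n\ \mathrm{even}}A_n$ and $X^{\mathrm{odd}}=\bigcup_{n\ \mathrm{odd}}A_n$, each an $R$-disjoint union of bounded pieces, so by Lemma~\ref{BasicLemmaAboutRepsilonPDecomposition}(1) (applied wedge-wise, together with Proposition~\ref{BoundedUnionProp}/\ref{BallEnlargementProp} to reconcile the $R$-disjoint union with a wedge) each of $X^{\mathrm{even}}$, $X^{\mathrm{odd}}$ carries such a decomposition tree. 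Combining these two pieces requires one extra splitting at the top of the tree — take the root's children to be $X^{\mathrm{even}}$ and $X^{\mathrm{odd}}$ (using one slot of $n_1$, which costs nothing since we may pad the sequence), and below each child hang the corresponding tree; the $R_i$-disjointness conditions at odd depths and the $n_i$-bound at even depths are inherited, after shifting depths by the top split. Finally, Theorem~\ref{ConvertDecompositionTreeToPUTree} (or directly the definition) converts this into the conclusion that $X$ has countable asymptotic dimension with respect to $\mathscr P$.

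The main obstacle I expect is the bookkeeping at the amalgamation step: a decomposition tree alternates between "at most $n_i$ children, union" at even depth and "$R_i$-disjoint cover" at odd depth, so inserting the even/odd-annuli split must be done at a depth of the right parity, and one must verify that the pieces $A_n$ themselves can be glued into the $R_i$-disjoint layer without violating the disjointness constant — this is exactly where the $R$-disjointness of non-consecutive annuli and an application of Proposition~\ref{BallEnlargementProp} (to absorb overlaps of even with odd annuli into bounded chunks via Proposition~\ref{BoundedUnionProp}) are needed. Once the parities and the uniform choice of $\{n_i\}$ are pinned down, the rest is routine, paralleling the exactness theorem above.
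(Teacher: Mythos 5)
Your proposal is essentially the paper's own argument: fix a single sequence $\{n_i\}$ working uniformly for all bounded subsets, take annuli of width $R_1$ about a basepoint, make the root's two children the unions of even- and odd-numbered annuli (the paper pads the sequence by setting $m_1=2$, $m_{i+1}=n_i$), let each parity class split at the odd depth into its $R_1$-disjoint annuli, and hang the bounded-annulus decomposition trees below. The only stylistic difference is your detour through Lemma \ref{BasicLemmaAboutRepsilonPDecomposition} and Propositions \ref{BoundedUnionProp}/\ref{BallEnlargementProp}, which is unnecessary (and not really what that lemma addresses), since the annuli themselves serve directly as the $R_1$-disjoint layer of children, exactly as in the paper.
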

\begin{proof}
Let's consider the case of $X$ being pseudo-metric with a base point $x_0$. The proof in the general case of  $\infty$-pseudo-metric spaces is similar.
Let $n_i\ge 0$ be a sequence of integers such that for each sequence
$R_i > 0$, all bounded subsets of $X$ have an $\{(R_i,n_i)\}_{i\ge 1}$-decomposition tree whose leaves satisfy $\mathscr{P}$
uniformly (see \ref{straight finite decomposition complexity}) and the height of all such trees is bounded by some $M > 0$.
Define $\{m_i\}_{i\ge 1}$ by $m_1=2$ and $m_{i+1}=n_i$ for $i\ge 1$
Given a sequence $\{R_i\}_{i\ge 1}$ of positive real numbers define 
the annulus $A_n$, $n\ge 1$, as $\{x\in X | (n-1)\cdot R_1\leq d(x,x_0)\leq n\cdot R_1\}$.
Build an $\{(R_i,m_i)\}_{i\ge 1}$-decomposition tree whose leaves satisfy $\mathscr{P}$
uniformly as follows:\\
1. Put $X$ at the root of it,\\
2. $X$ has two children: the union of all odd-numbered annuli $A_n$ and the union
of all even-numbered annuli $A_n$,\\
3. Follow with an $\{(R_i,n_i)\}_{i\ge 2}$-decomposition tree of each annulus.
\end{proof}

\begin{Question}
Let $\mathscr{P}$  be the property of having Asymptotic Property C.
If all bounded subsets of $X$ have $\mathscr{P}$ uniformly, then does $X$
have Asymptotic Property C?
\end{Question}

\begin{Question}
Let $\mathscr{P}$  be the property of having Asymptotic Property C.
If $X$ has probabilistic decomposition complexity with respect to $\mathscr{P}$, then does $X$
have Asymptotic Property C?
\end{Question}

\begin{Question}
Let $\mathscr{P}$  be the property of being coarsely embeddable in a Hilbert space.
If $X$ has probabilistic decomposition complexity with respect to $\mathscr{P}$, then is $X$
coarsely embeddable in a Hilbert space?
\end{Question}

\begin{Question}
Let $\mathscr{P}$  be the property of being coarsely embeddable in a Hilbert space.
If the family of bounded subsets of $X$ has $\mathscr{P}$ uniformly, then is $X$
coarsely embeddable in a Hilbert space?
\end{Question}

\end{document}